\algrenewcommand\algorithmicrequire{\textbf{Input:}}
\algrenewcommand\algorithmicensure{\textbf{Output:}}
\definecolor{mauve}{rgb}{0.58,0,0.82}
\definecolor{dkgreen}{rgb}{0,0.6,0}
\tiny\color{gray},
\definecolor{HUblue}{rgb}{0,0.2157,0.4235}
\definecolor{HUred}{rgb}{0.5412,0.0588,0.0784}
\definecolor{HUsand}{rgb}{0.8235,0.7529,0.4039}
\definecolor{HUgreen}{rgb}{0,0.3412,0.1725}
\pgfplotsset{%
    compat=newest,%
    every axis/.style={scale only axis},%
    grid style={densely dotted, semithick},%
}
\newcommand\drawslopetriangle[4][ST]{
    \pgfplotsextra
    {
        \pgfkeys{/pgf/fpu=true}
        \pgfmathsetmacro\leftcoord{#3};
        \pgfmathsetmacro\rightcoord{10*#3};
        \pgfmathsetmacro\bottomcoord{#4};
        \pgfmathsetmacro\topcoord{10^(#2)*#4};
        \pgfkeys{/pgf/fpu=false}
        \coordinate (#1-BL) at (axis cs:\leftcoord,\bottomcoord);
        \coordinate (#1-BR) at (axis cs:\rightcoord,\bottomcoord);
        \coordinate (#1-TL) at (axis cs:\leftcoord,\topcoord);
        \shadedraw[%
            bottom color = black!20,%
            middle color = black!5,%
            top color    = white,%
            draw         = black,%
            font         = \footnotesize%
        ]
        (#1-TL) -- (#1-BL) node[midway, left] {\(#2\)}
        -- (#1-BR) node[midway, below] {\(1\)} -- (#1-TL);
    }
}
\newcommand\drawswappedslopetriangle[4][SST]{
    \pgfplotsextra
    {
        \pgfkeys{/pgf/fpu=true}
        \pgfmathsetmacro\leftcoord{#3/10};
        \pgfmathsetmacro\rightcoord{#3};
        \pgfmathsetmacro\topcoord{#4};
        \pgfmathsetmacro\bottomcoord{10^(-#2)*#4};
        \pgfkeys{/pgf/fpu=false}
        \coordinate (#1-TR) at (axis cs:\rightcoord,\topcoord);
        \coordinate (#1-BR) at (axis cs:\rightcoord,\bottomcoord);
        \coordinate (#1-TL) at (axis cs:\leftcoord,\topcoord);
        \shadedraw[%
            bottom color = black!20,%
            middle color = black!5,%
            top color    = white,%
            draw         = black,%
            font         = \footnotesize%
        ]
        (#1-BR) -- (#1-TR) node[midway, right] {\(#2\)}
        -- (#1-TL) node[midway, above] {\(1\)} -- (#1-BR);
    }
}
\newcommand\drawslopetriangleup[4][STU]{
  \pgfplotsextra
  {
    \pgfkeys{/pgf/fpu=true}
    \pgfmathsetmacro\leftcoord{#3};
    \pgfmathsetmacro\rightcoord{10*#3};
    \pgfmathsetmacro\bottomcoord{#4};
    \pgfmathsetmacro\topcoord{10^(#2)*#4};
    \pgfkeys{/pgf/fpu=false}

    \coordinate (#1-BL) at (axis cs:\leftcoord,\bottomcoord);
    \coordinate (#1-BR) at (axis cs:\rightcoord,\bottomcoord);
    \coordinate (#1-TR) at (axis cs:\rightcoord,\topcoord);

    \shadedraw[%
      bottom color    = black!20,%
      middle color = black!5,%
      top color = white,%
      draw         = black%
    ]
      (#1-BL) -- (#1-BR) node[midway, below] {\(1\)}
      -- (#1-TR) node[midway, right] {\(#2\)} -- (#1-BL);
  }
}
\newcommand\drawswappedslopetriangleup[4][SSTU]{
  \pgfplotsextra
  {
    \pgfkeys{/pgf/fpu=true}
    \pgfmathsetmacro\leftcoord{#3};
    \pgfmathsetmacro\rightcoord{10*#3};
    \pgfmathsetmacro\bottomcoord{10^(-#2)*#4};
    \pgfmathsetmacro\topcoord{#4};
    \pgfkeys{/pgf/fpu=false}

    \coordinate (#1-BL) at (axis cs:\leftcoord,\bottomcoord);
    \coordinate (#1-TR) at (axis cs:\rightcoord,\topcoord);
    \coordinate (#1-TL) at (axis cs:\leftcoord,\topcoord);

    \shadedraw[%
      bottom color    = black!20,%
      middle color = black!5,%
      top color = white,%
      draw         = black%
    ]
      (#1-TR) -- (#1-TL) node[midway, above] {\(1\)}
      -- (#1-BL) node[midway, left] {\(#2\)} -- (#1-TR);
  }
}
\newcommand\annotateeast[4][1]{
    \pgfplotsextra
    {
        \pgfkeys{/pgf/fpu=true}
        \pgfmathsetmacro\tipx{#2};
        \pgfmathsetmacro\tipy{#3};
        \pgfkeys{/pgf/fpu=false}
        \draw[<-,gray] (axis cs: \tipx, \tipy) --
        +($#1*(10pt, 0pt)$) node[right,gray,font=\footnotesize] {#4};
    }
}
\newcommand\annotatesoutheast[4][1]{
    \pgfplotsextra
    {
        \pgfkeys{/pgf/fpu=true}
        \pgfmathsetmacro\tipx{#2};
        \pgfmathsetmacro\tipy{#3};
        \pgfkeys{/pgf/fpu=false}
        \draw[<-,gray] (axis cs: \tipx, \tipy) --
        +($#1*(8pt, -8pt)$) node[right,gray,font=\footnotesize] {#4};
    }
}
\newcommand\annotatenortheast[4][1]{
    \pgfplotsextra
    {
        \pgfkeys{/pgf/fpu=true}
        \pgfmathsetmacro\tipx{#2};
        \pgfmathsetmacro\tipy{#3};
        \pgfkeys{/pgf/fpu=false}
        \draw[<-,gray] (axis cs: \tipx, \tipy) --
        +($#1*(8pt, 8pt)$) node[right,gray,font=\footnotesize] {#4};
    }
}
\let\annotatesoutheastabs\annotatesoutheast
\pgfplotsset{
    colormap={parula}{
        rgb=(0.2081,0.1663,0.5292)
        rgb=(0.2116,0.1898,0.5777)
        rgb=(0.2123,0.2138,0.627)
        rgb=(0.2081,0.2386,0.6771)
        rgb=(0.1959,0.2645,0.7279)
        rgb=(0.1707,0.2919,0.7792)
        rgb=(0.1253,0.3242,0.8303)
        rgb=(0.0591,0.3598,0.8683)
        rgb=(0.0117,0.3875,0.882)
        rgb=(0.006,0.4086,0.8828)
        rgb=(0.0165,0.4266,0.8786)
        rgb=(0.0329,0.443,0.872)
        rgb=(0.0498,0.4586,0.8641)
        rgb=(0.0629,0.4737,0.8554)
        rgb=(0.0723,0.4887,0.8467)
        rgb=(0.0779,0.504,0.8384)
        rgb=(0.0793,0.52,0.8312)
        rgb=(0.0749,0.5375,0.8263)
        rgb=(0.0641,0.557,0.824)
        rgb=(0.0488,0.5772,0.8228)
        rgb=(0.0343,0.5966,0.8199)
        rgb=(0.0265,0.6137,0.8135)
        rgb=(0.0239,0.6287,0.8038)
        rgb=(0.0231,0.6418,0.7913)
        rgb=(0.0228,0.6535,0.7768)
        rgb=(0.0267,0.6642,0.7607)
        rgb=(0.0384,0.6743,0.7436)
        rgb=(0.059,0.6838,0.7254)
        rgb=(0.0843,0.6928,0.7062)
        rgb=(0.1133,0.7015,0.6859)
        rgb=(0.1453,0.7098,0.6646)
        rgb=(0.1801,0.7177,0.6424)
        rgb=(0.2178,0.725,0.6193)
        rgb=(0.2586,0.7317,0.5954)
        rgb=(0.3022,0.7376,0.5712)
        rgb=(0.3482,0.7424,0.5473)
        rgb=(0.3953,0.7459,0.5244)
        rgb=(0.442,0.7481,0.5033)
        rgb=(0.4871,0.7491,0.484)
        rgb=(0.53,0.7491,0.4661)
        rgb=(0.5709,0.7485,0.4494)
        rgb=(0.6099,0.7473,0.4337)
        rgb=(0.6473,0.7456,0.4188)
        rgb=(0.6834,0.7435,0.4044)
        rgb=(0.7184,0.7411,0.3905)
        rgb=(0.7525,0.7384,0.3768)
        rgb=(0.7858,0.7356,0.3633)
        rgb=(0.8185,0.7327,0.3498)
        rgb=(0.8507,0.7299,0.336)
        rgb=(0.8824,0.7274,0.3217)
        rgb=(0.9139,0.7258,0.3063)
        rgb=(0.945,0.7261,0.2886)
        rgb=(0.9739,0.7314,0.2666)
        rgb=(0.9938,0.7455,0.2403)
        rgb=(0.999,0.7653,0.2164)
        rgb=(0.9955,0.7861,0.1967)
        rgb=(0.988,0.8066,0.1794)
        rgb=(0.9789,0.8271,0.1633)
        rgb=(0.9697,0.8481,0.1475)
        rgb=(0.9626,0.8705,0.1309)
        rgb=(0.9589,0.8949,0.1132)
        rgb=(0.9598,0.9218,0.0948)
        rgb=(0.9661,0.9514,0.0755)
        rgb=(0.9763,0.9831,0.0538)
    }
}
\newcommand\vvvert{|\mkern-1.5mu|\mkern-1.5mu|}
\newcommand\D{\operatorname{D}\mkern-1mu}
\newcommand\E{\mathcal E}
\newcommand\N{\mathbb N}
\newcommand\R{\mathbb R}
\newcommand\T{\mathcal T}
\newcommand\V{\mathcal V}
\DeclareMathOperator\conv{conv}
\DeclareMathOperator\cond{cond}
\DeclareMathOperator\Int{int}
\DeclareMathOperator\Mid{mid}
\newcommand\ds{\, \mathrm{d} s}
\newcommand\dx{\, \mathrm{d} x}
\newcommand\IP{{\textup{IP}}}
\newcommand\PW{{\textup{pw}}}
\title{%
    Local parameter selection in the\\
    C\textsuperscript{0} interior penalty method
    for the biharmonic equation%
}
\author{%
    Philipp Bringmann%
    \thanks{Institute of Analysis and Scientific Computing,
    TU Wien, Austria \newline
    (\email{philipp.bringmann@asc.tuwien.ac.at}, \email{julian.streitberger@asc.tuwien.ac.at}).}
    \and
    Carsten Carstensen%
    \thanks{Department of Mathematics, Humboldt-Universit\"at zu Berlin,
        Germany
        (\email{cc@math.hu-berlin.de}).%
    }
    \and
    Julian Streitberger\footnotemark[1]%
}
\begin{document}
    \maketitle
    %
    %
    \begin{abstract}
        The symmetric
        C\textsuperscript{0} interior penalty method
        is one of the most popular
        discontinuous Galerkin methods
        for the biharmonic equation.
        This paper introduces an automatic local selection
        of the involved stability parameter
        in terms of the geometry
        of the underlying triangulation
        for arbitrary polynomial degrees.
        The proposed choice ensures a stable discretization
        with guaranteed discrete ellipticity constant.
        Numerical evidence for uniform and
        adaptive mesh-refinement and various polynomial
        degrees supports the reliability and efficiency of the local parameter selection and recommends this in practice.
        The approach is documented in 2D for triangles,
        but the methodology behind can be generalized
        to higher dimensions,
        to non-uniform polynomial degrees,
        and to rectangular discretizations.
        Two appendices present the realization
        of our proposed parameter selection in various established
        finite element software packages as well as
        a detailed documentation of
        a self-contained MATLAB program for the lowest-order
        C\textsuperscript{0} interior penalty method.
    \end{abstract}

    \begin{keywords}
        C\textsuperscript{0} interior penalty method,
        discontinuous Galerkin method,
        biharmonic equation,
        implementation,
        local parameter selection, 
        penalty parameter
    \end{keywords}

    \begin{AMS}
        65N12, 65N15, 65N30, 65N50, 65Y20
    \end{AMS}
    %
    %

\section{Introduction}

A classical conforming finite element method for the plate problem
originates from the work of Argyris \cite{Argyris1968TheTF}
with a quintic polynomial ansatz space $P_5(T)$
and $21$ degrees of freedom on each triangle $T$.
The practical application appears less prominent
due to the higher computational efforts  \cite{DS2008}, although
recent works \cite{CH21, GRALE2022115352} establish
a highly efficient adaptive multilevel solver
for the hierarchical Argyris FEM.
%
At the moment, classical nonconforming FEMs
\cite{MR3061064,MR4230429,MR4235819,MR3407244}
and discontinuous Galerkin schemes
appear advantageous, at least more popular.

Discontinuous Galerkin finite element methods
\cite{MR431742,MR2142191,MR3061064,di2011mathematical,suli2007hp}
such as the C\textsuperscript{0} interior penalty method
are a popular choice for fourth-order problems
in order to avoid C\textsuperscript{1}-conforming finite elements.
Their main drawback is the dependence of the stability
on a sufficiently large penalty parameter.
The choice of this parameter is often heuristical and based on
the individual experience of the user.
This paper proposes an explicit geometry-dependent and local selection
of the penalty parameter that guarantees the stability of
the resulting scheme.

Given a source term $f \in L^2(\Omega)$ in a bounded polygonal
Lipschitz domain $\Omega \subset \R^2$,
let $u \in V \coloneqq H^2_0(\Omega)$ be the weak solution to
the biharmonic equation $\Delta^2 u = f$,
%
%
\begin{align}\label{eq:weakform}
    a(u,v)
    \coloneqq
    \int_\Omega \D^2 u : \D^2 v \dx
    =
    \int_\Omega f v \dx
    \quad \text{for all } v \in V.
\end{align}
The Riesz representation theorem
applies in the Hilbert space $(V,a)$ and proves
well-posedness of the formulation~\eqref{eq:weakform}.
Elliptic regularity theory verifies that
$f \in L^2(\Omega)$ implies $u \in H^{2+\alpha}(\Omega) \cap H^2_0(\Omega)$
\cite{MR2589244, MR595625, MR1814364, MR3014461}.
The pure Dirichlet boundary conditions
in the model example
lead to $\alpha > 1/2$.

For a regular triangulation \(\T\) of \(\Omega\)
into closed triangles and the polynomial degree \(k \geq 2\),
the symmetric C\textsuperscript{0} interior penalty method
(C0IP) seeks
\(
    u_\IP \in V_h
    \coloneqq
    S^k_0(\T)
    \coloneqq
    P_k(\T) \cap H^1_0(\Omega)
\)
in the Lagrange finite element space with
\begin{equation}
    \label{eq:discrete_form}
    A_h(u_\IP, v_\IP)
    =
    \int_\Omega f v_\IP \dx
    \quad \text{for all } v_\IP \in V_h.
\end{equation}
The three contributions to
the bilinear form $A_h: V_h \times V_h \to \R$ defined, for $u_\IP, v_\IP \in V_h$, by
\begin{equation}
    \label{eq:definition_Ah}
    A_h(u_{\IP}, v_{\IP})
    \coloneqq
    a_\PW(u_\IP,v_\IP)
    - (\mathcal{J}(u_\IP,v_\IP)
        + \mathcal{J}(v_\IP, u_\IP))
    + c_\IP(u_\IP, v_\IP)
\end{equation}
originate from the piecewise integration by parts
in the derivation of~\eqref{eq:weakform}
\cite{MR2142191}.
While the boundedness of \(A_h\)
follows from standard arguments,
its coercivity is subject to the assumption
of a sufficiently large penalty parameter $\sigma_{\IP,E} > 0$
in the bilinear form $c_\IP: V_h \times V_h \to \R$ defined, for \(u_\IP, v_\IP \in V_h\), by
\begin{equation}
    \label{eq:penalty_term}
    c_\IP(u_\IP,v_\IP)
    \coloneqq
    \sum_{E \in \mathcal{E}}
    \frac{\sigma_{\IP,E}}{h_E}
    \int_E
    [\nabla u_\IP \cdot \nu_E]_E
    [\nabla v _\IP \cdot \nu_E]_E
    \ds,
\end{equation}
with the normal jumps \([\nabla v_\IP \cdot \nu_E]_E\)
\cite{MR2142191,MR4023747}.
This paper introduces a local parameter
\begin{align}
	\label{eq:definition_sigma}
	\sigma_{\IP,E}
	\coloneqq
	\begin{cases}
		\displaystyle
		\frac{3a k(k-1) h_E^2}{8}
		\bigg(
		\frac{1}{\vert T_+\vert} + \frac{1}{\vert T_- \vert}
		\bigg)
		&
		\text{if }
		E = \partial T_+ \cap \partial T_- \in
		\E(\Omega),\medskip\\
		\displaystyle
		\frac{3a k(k-1) h_E^2}{2\vert T_+\vert}
		&
		\text{if }
		E \in \E(T_+) \cap \E(\partial\Omega).
	\end{cases}
\end{align}
The penalty parameter \(\sigma_{\IP,E}\) in~\eqref{eq:definition_sigma}
contains a prefactor $a > 1$.
Theorem~\ref{thm:stability} below
establishes that \emph{every} choice of \(a > 1\) leads
to guaranteed stability with stability constant at least \(\kappa = 1 - 1/\sqrt{a}\).
In the case of very large penalization with
$a \to \infty$, i.e.,
$\sigma_{\IP, E} \to \infty$,
the lower bound $\kappa$ tends to 1.
This fine-tuning of the penalty parameter \(\sigma_{\IP, E}\)
enables strong penalization as employed in \cite{MR2670003}
for the analysis of optimal convergence rates of
adaptive discontinuous Galerkin methods.

Numerical experiments
confirm the guaranteed stability
and exhibit rate-optimal convergence of
an adaptive C0IP for various polynomial degrees
in Section~\ref{sec:numerical_experiments} below.
The computation of the discrete $\inf$-$\sup$ constants (as certain eigenvalues of the discrete operator) reveals little overestimation only and recommends the proposed local parameter selection in practise.
A detailed investigation of the influence
of the parameter \(a\) reveals that
large penalty parameters lead to a substantial increase
of the condition numbers of the system matrix.
Hence, we recommend a small choice of \(a\), e.g.,
\(a = 2\), in order to avoid large condition numbers.
Alternatively, a strong penalization requires
the application of suitable preconditioners for C0IP
\cite{Brenner2005, Brenner2005a, Brenner2006, Brenner2011, Brenner2018, Brenner2022}.

\begin{example}
    On a uniform triangulation into right isosceles triangles of the same area,
    the penalty parameter~\eqref{eq:definition_sigma}
    for the quadratic C0IP method reads
    \[
        \sigma_{\IP, E}
        =
        \begin{cases}
            3a/4 & \text{for a cathetus } E \in \E(\Omega)
            \text{ in the interior,}
            \\
            3a/2 & \text{for a hypotenuse } E \in \E(\Omega)
            \text{ in the interior,}
            \\
            6a & \text{for a cathetus } E \in \E(\partial\Omega)
            \text{ on the boundary,}
            \\
            12a & \text{for a hypotenuse } E \in
            \E(\partial\Omega) \text{ on the boundary.}
        \end{cases}
    \]
    A choice of \(a\) close to \(1\) leads to
    smaller penalty parameters for the majority of interior
    edges compared to values from the literature,
    e.g., \(\sigma_{\IP, E} = 5\) in \cite{MR2670114}.
\end{example}

The remaining parts of the paper are organized as follows.
Section~\ref{sec:notation} specifies the notation such that
Section~\ref{sec:discrete_stability} can present the main
result of the paper in Theorem~\ref{thm:stability}.
Numerical experiments investigate the stability of the scheme with
the suggested automatic penalty selection
in Section~\ref{sec:numerical_experiments}.
The coercivity constants are computed numerically
and compared with the theoretically established value.
Moreover,
the performance of an adaptive mesh-refinement algorithm
is examined.
Appendix~\ref{sec:remarks_realization}
presents the numerical realization
of this automatic choice of the penalty parameter
to existing FEM software packages such as
the uniform form language \cite{OLW2009, Kirby2018, KM2018},
deal.II \cite{dealII94},
and NGSolve \cite{Schoeberl2014}.
The implementation of the C\textsuperscript{0}
interior penalty method in these packages
turns out to be very compact.
A complete and accessible documentation
of a self-contained MATLAB code
for the lowest-order discretization
supplements this
in Appendix~\ref{app:implementation}.

\section{Notation}
\label{sec:notation}
Standard notation of Lebesgue and Sobolev spaces, their norms,
and $L^2$ scalar products applies throughout the paper.
Instead of the space \(V\)
in the weak formulation~\eqref{eq:weakform},
discontinuous Galerkin methods employ
the piecewise Sobolev space
\begin{equation}
	\label{eq:piecewise_Sobolev}
	H^2(\T)
	\coloneqq
	\big\{
	v \in L^2(\Omega)
	:
	\forall T \in \T,\:
	v\vert_T
	\in H^2(T) \coloneqq H^2(\Int(T))
	\big\}
\end{equation}
with respect to a shape-regular triangulation \(\T\)
of the domain $\Omega$ into closed triangles.
Define the space of piecewise polynomials
\[
    P_k(\T)
    \coloneqq
    \{
        v \in L^2(\Omega)
        \;:\;
        v\vert_{T} \in P_k(T) \ \text{for all} \ T \in \T
    \}
\]
of total degree at most $k \in \N_0$.
For \(v \in H^2(\T)\),
the piecewise application of the distributional derivatives
leads to the piecewise Hessian \(\D^2_\PW v \in L^2(\Omega; \mathbb{S})\),
\((\D^2_\PW v)\vert_T \coloneqq \D^2 (v\vert_T)\)
with values in the space \(\mathbb{S} \subset \R^{2 \times 2}\)
of symmetric \(2 \times 2\) matrices.

The piecewise Sobolev functions \(v \in H^2(\T)\)
allow for the evaluation of
averages \(\langle v\rangle_E\) and
jumps \([v]_E\) across an edge \(E \in \E\).
Each interior edge \(E \in \E(\Omega)\)
of length \(h_E \coloneqq \vert E \vert\)
is the common edge of exactly two triangles \(T_+, T_- \in T\), written
\(E = \partial T_+ \cap \partial T_-\).
Then
\begin{equation}
	\label{eq:jump}
	[v]_E
	\coloneqq
	(v \vert_{T_+})\vert_E - (v \vert_{T_-})\vert_E
	\quad\text{and}\quad
	\langle v \rangle_E
	\coloneqq
	\frac12 (v \vert_{T_+})\vert_E + \frac12 (v \vert_{T_-})\vert_E.
\end{equation}
The unit normal vector \(\nu_E\) is oriented such that
\(\nu_E \cdot \nu_{T_\pm}\vert_E = \pm 1\)
for the outward unit normal vectors \(\nu_{T_\pm}\) of \(T_\pm\). For each boundary edge \(E \in \E(\partial\Omega)\),
let \(T_+ \in \T\) denote the unique triangle with edge
\(E \in \E(T_+)\) and set
\(
[v]_E
\coloneqq
\langle v \rangle_E
\coloneqq
(v \vert_{T_+})\vert_E.
\)
Analogous definitions apply for vector- or matrix-valued polynomials.
Let $\T$ denote a shape regular triangulation
of the polygonal Lipschitz domain $\Omega$ into closed triangles
and \(\V\) (resp.\ $\V(\Omega)$ or $\V(\partial \Omega)$)
the set of all (resp.\ interior or boundary) vertices \cite{MR2322235, MR2373954}.
Let \(\E\) (resp.\ $\E(\Omega)$ or $\E(\partial \Omega)$)
be the set of all (resp.\ interior or boundary) edges.
For each triangle \(T \in \T\) of area \(\vert T \vert\),
let \(\V(T)\) denote the set of its three vertices and
\(\E(T)\) the set of its three edges.
Abbreviate the edge patch
by \(\omega_E \coloneqq \Int(T_+ \cup T_-) \subseteq \Omega\)
for an interior edge
\(E = \partial T_+ \cap \partial T_- \in \E(\Omega)\)
and by
\(\omega_E \coloneqq \Int(T_+) \subseteq \Omega\)
for a boundary edge \(E \in \E(T_+) \cap \E(\partial\Omega)\).

The remaining contributions to the bilinear form \(A_h\) in~\eqref{eq:definition_Ah} consist of
the piecewise energy scalar product
\(a_\PW: V_h \times V_h \to \R\)
and the jump term \(\mathcal{J}: V_h \times V_h \to \R\)
defined, for \(u_\IP, v_\IP \in V_h\), by
\begin{equation}
	\label{eq:stiffness_and_jump}
	\begin{split}
		a_{\textup{pw}}(u_\IP,v_\IP)
		&\coloneqq
		\sum_{T \in \mathcal{T}}
		\int_T \D^2 u_\IP: \D^2 v_\IP \dx, \\
		\mathcal{J}(u_\IP,v_\IP)
		&\coloneqq
		\sum_{E \in \mathcal{E}}
		\int_E
		\langle (\D^2_\PW u_\IP \, \nu_E)  \cdot \nu_E \rangle_E
		\cdot [\nabla v_\IP \cdot \nu_E]_E \dx.
	\end{split}
\end{equation}
Note that a different
convention for the orientation
of the normal vector \(\nu_E\)
in \cite{MR2142191}
leads to other signs in the
bilinear form \(A_h\)
compared
to~\eqref{eq:definition_Ah}.
Given a basis \(\varphi_0, \dots, \varphi_J\) of
\(V_h\), define the matrices
\(B(\T), N(\T) \in \R^{J \times J}\), for \(j, k = 1,\dots, J\), by
\begin{equation}
	\label{eq:system_matrix}
	B_{jk}(\T)
	\coloneqq
	A_h(\varphi_j, \varphi_k)
	\quad\text{and}\quad
	N_{jk}(\T)
	\coloneqq
	a_\PW(\varphi_j, \varphi_k)
	+
	c_\IP(\varphi_j, \varphi_k).
\end{equation}

The notation $A \lesssim B$ abbreviates $A \le C B$ for a positive,
generic constant $C$, solely depending on the domain $\Omega$,
the polynomial degree, and the shape regularity of the triangulation $\T$;
$A \approx B$ abbreviates $A \lesssim B \lesssim A$.

For two indices \(j, k \in \N\),
let \(\delta_{jk} \in \{0, 1\}\) denote the Kronecker symbol
defined by \(\delta_{jk} \coloneqq 1\) if and only if \(j = k\).
The enclosing single bars \(\vert \bullet \vert\)
apply context-sensitively and
denote not only the modulus of real numbers,
the Euclidian norm of vectors in \(\R^2\),
but also the cardinality of finite sets,
the area of two-dimensional Lebesgue sets,
and the length of edges.

\section{Stability}
\label{sec:discrete_stability}
This section develops a \textit{novel} stability analysis
with a penalty parameter $\sigma_{\IP,E}$
in the bilinear form \(c_\IP\) from~\eqref{eq:penalty_term}.
Let the discrete space \(V_h\)
be equipped with the mesh-dependent norm
$\Vert \bullet \Vert_h$ defined,
for \(v_\IP \in V_h\), by
\[
    \Vert v_\IP \Vert_h^2
    \coloneqq
    \vvvert v_\IP \vvvert_{\PW}^2 + c_\IP(v_\IP, v_\IP).
\]
with the piecewise semi-norm
\(
    \vvvert \bullet \vvvert_{\PW}
    \coloneqq
    \vert \bullet \vert_{H^2(\T)}
    \coloneqq
    \Vert \D^2_\PW \bullet \Vert_{L^2(\Omega)}
\).
The parameter \(\sigma_{\IP,E}\) from \eqref{eq:definition_sigma} solely depends
on the underlying triangulation and
allows for a guaranteed stability of the bilinear from
\(A_h\) from~\eqref{eq:definition_Ah} with respect to the
mesh-dependent norm \(\Vert \bullet \Vert_h\)
in the following theorem.

\begin{theorem}
    \label{thm:stability}
    Given any $a > 1$,
    define the penalty parameter $\sigma_{\IP,E} > 0$ as in~\eqref{eq:definition_sigma}.
    For every choice of \(a > 1\),
    the constant \(\kappa \coloneqq 1 - 1/\sqrt{a} > 0\)
    and all discrete functions $v_\IP \in V_h$ satisfy
    the stability estimate
    \begin{equation}
        \label{eq:stability}
          \kappa\: \Vert v_\IP \Vert_h^2 \leq A_h(v_\IP, v_\IP).
    \end{equation}
\end{theorem}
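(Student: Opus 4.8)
The plan is to start from the algebraic identity
\[
A_h(v_\IP,v_\IP)=\vvvert v_\IP\vvvert_\PW^2+c_\IP(v_\IP,v_\IP)-2\mathcal{J}(v_\IP,v_\IP)=\Vert v_\IP\Vert_h^2-2\mathcal{J}(v_\IP,v_\IP),
\]
which follows from \eqref{eq:definition_Ah} and $a_\PW(v_\IP,v_\IP)=\vvvert v_\IP\vvvert_\PW^2$. Thus \eqref{eq:stability} is equivalent to $2\mathcal{J}(v_\IP,v_\IP)\le(1-\kappa)\Vert v_\IP\Vert_h^2=\tfrac{1}{\sqrt a}(\vvvert v_\IP\vvvert_\PW^2+c_\IP(v_\IP,v_\IP))$. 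Since $2xy\le x^2+y^2$, it suffices to prove the single bound
\[
|\mathcal{J}(v_\IP,v_\IP)|\le\tfrac{1}{\sqrt a}\,\vvvert v_\IP\vvvert_\PW\,\sqrt{c_\IP(v_\IP,v_\IP)},
\]
because the weighted Young/AM--GM inequality with $x=\vvvert v_\IP\vvvert_\PW$ and $y=\sqrt{c_\IP(v_\IP,v_\IP)}$ then reproduces \eqref{eq:stability} with exactly $\kappa=1-1/\sqrt a$.

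To bound $\mathcal{J}$ I would apply the Cauchy--Schwarz inequality on each edge in \eqref{eq:stiffness_and_jump} and then across the sum over $\E$, inserting the penalty weights $\sigma_{\IP,E}/h_E$ so that the jump factor reassembles into $\sqrt{c_\IP(v_\IP,v_\IP)}$:
\[
|\mathcal{J}(v_\IP,v_\IP)|\le\Bigl(\sum_{E\in\E}\tfrac{h_E}{\sigma_{\IP,E}}\,\bigl\Vert\langle(\D^2_\PW v_\IP\,\nu_E)\cdot\nu_E\rangle_E\bigr\Vert_{L^2(E)}^2\Bigr)^{1/2}\sqrt{c_\IP(v_\IP,v_\IP)}.
\]
The problem then reduces to the edge-weighted trace estimate
\[
\sum_{E\in\E}\tfrac{h_E}{\sigma_{\IP,E}}\,\bigl\Vert\langle(\D^2_\PW v_\IP\,\nu_E)\cdot\nu_E\rangle_E\bigr\Vert_{L^2(E)}^2\le\tfrac1a\,\vvvert v_\IP\vvvert_\PW^2.
\]

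The decisive ingredient is the sharp, shape-independent polynomial trace inequality on a triangle $T$: for $p\in P_{k-2}(T)$ and an edge $E\in\E(T)$ one has $\Vert p\Vert_{L^2(E)}^2\le\tfrac{k(k-1)}{2}\tfrac{h_E}{|T|}\Vert p\Vert_{L^2(T)}^2$. Applying this to the normal--normal Hessian $p=(\D^2 v_\IP|_T\,\nu_E)\cdot\nu_E\in P_{k-2}(T)$, splitting the average $\langle\cdot\rangle_E$ into its two one-sided contributions, and collecting, for a fixed $T$, all terms carrying $\Vert\D^2 v_\IP\Vert_{L^2(T)}^2$, the precise form of $\sigma_{\IP,E}$ in \eqref{eq:definition_sigma} is exactly what makes the $h_E$- and $|T|$-factors cancel: the harmonic-type weight $\tfrac1{|T_+|}+\tfrac1{|T_-|}$, the doubled boundary weight $\tfrac{3ak(k-1)h_E^2}{2|T_+|}$, and the prefactor reduce the estimate, per triangle, to the purely pointwise matrix inequality
\[
\sum_{E\in\E(T)}\beta_E\,\bigl((S\nu_E)\cdot\nu_E\bigr)^2\le|S|^2\qquad\text{for all symmetric }S,
\]
with weights $\beta_E=\tfrac23\,|T'_E|/(|T|+|T'_E|)\in(0,\tfrac23]$ on interior edges (neighbour $T'_E$) and $\beta_E=\tfrac13$ on boundary edges. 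I expect this last, geometric step to be the main obstacle: bounding each term crudely by $((S\nu_E)\cdot\nu_E)^2\le|S|^2$ and charging the full energy to all three edges of $T$ is too wasteful, so the argument must quantify how the angular spread of the three outward edge--normals limits the quadratic form $S\mapsto\sum_{E\in\E(T)}\beta_E\,((S\nu_E)\cdot\nu_E)^2$ against $|S|^2$. Matching this geometric bound to the area-ratio weights $\beta_E$, which is precisely where the constants $3/8$ and $3/2$ in \eqref{eq:definition_sigma} are pinned down, is the technical heart of the proof.
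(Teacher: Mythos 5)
Your first half is correct and is essentially the paper's argument in disguise: the identity $A_h(v_\IP,v_\IP)=\Vert v_\IP\Vert_h^2-2\mathcal{J}(v_\IP,v_\IP)$, the weighted Cauchy--Schwarz over edges, and the single Young step are equivalent to the paper's edgewise weighted Young inequality with the optimal parameter $\varepsilon=1/\sqrt a$, and both proofs reduce the theorem to the same key estimate
\[
\sum_{E\in\E}\frac{h_E}{\sigma_{\IP,E}}\,
\Vert\langle(\D^2_\PW v_\IP\,\nu_E)\cdot\nu_E\rangle_E\Vert_{L^2(E)}^2
\le\frac1a\,\vvvert v_\IP\vvvert_\PW^2,
\]
using the same sharp trace inequality (Lemma~\ref{lem:discrete_trace} applied with degree $k-2$).

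The genuine gap is in your final reduction. Splitting the average with the \emph{unweighted} bound $\Vert\tfrac12(p_++p_-)\Vert^2\le\tfrac12\Vert p_+\Vert^2+\tfrac12\Vert p_-\Vert^2$ produces your area-ratio weights $\beta_E=\tfrac23\,\vert T'_E\vert/(\vert T\vert+\vert T'_E\vert)$, and the per-triangle inequality $\sum_{E\in\E(T)}\beta_E\bigl((S\nu_E)\cdot\nu_E\bigr)^2\le|S|^2$ you then require is \emph{false} in general: for a thin triangle whose three unit normals are all nearly parallel to $\pm e$ and whose neighbors are much larger (so each $\beta_E\approx\tfrac23$), the rank-one matrix $S=e\otimes e$ yields a left-hand side close to $2$ while $|S|^2=1$. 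Hence no angular-spread argument can close this step without extra mesh hypotheses that the theorem does not assume. The obstacle is self-inflicted: replace the unweighted split by the weighted Young inequality $\Vert\tfrac12(p_++p_-)\Vert^2\le\tfrac{1+\alpha}{4}\Vert p_+\Vert^2+\tfrac{1+1/\alpha}{4}\Vert p_-\Vert^2$ with $\alpha=\vert T_+\vert/\vert T_-\vert$ per edge — exactly what the paper does, after first applying the pointwise bound $|(\D^2_\PW v_\IP\,\nu_E)\cdot\nu_E|\le|\D^2 v_\IP|$ rather than postponing it. Then every edge-side coefficient becomes exactly $\tfrac1{3a}$, the crude per-edge bound already suffices, and summing over the three edges of each triangle (the overlap factor $3$ built into~\eqref{eq:definition_sigma}) yields the required $\tfrac1a\vvvert v_\IP\vvvert_\PW^2$. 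In particular, the constants $3/8$ and $3/2$ in~\eqref{eq:definition_sigma} are pinned down by $3$ (edges per triangle), $4$ (the squared average), and $2$ (the trace constant) — not, as you conjecture, by the geometry of the edge normals.
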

Two remarks are in order
before the proof of the Theorem~\ref{thm:stability}
concludes this section.

\begin{remark}[variable polynomial degrees]
    \label{rem:variable-p}
    The assumption of uniform polynomial degree can be weakened,
    as follows for a different polynomial degree $k_T$
    on every triangle $T \in \T$.
    For an edge \(E \in \E\),
    an analogous argumentation leads
    to the choice of the edge-dependent parameter
    \[
        \sigma_{\IP,E}
        \coloneqq
        \begin{cases}
            \displaystyle
            \frac{3a h_E^2}{8}
            \bigg(
                \frac{k_{T_+}(k_{T_+}-1)}{\vert T_+\vert}
                + \frac{k_{T_-}(k_{T_-}-1)}{\vert T_- \vert}
            \bigg)
            &
            \text{if }
            E = \partial T_+ \cap \partial T_- \in
            \E(\Omega),\medskip\\
            \displaystyle
            \frac{3a k_{T_+}(k_{T_+}-1) h_E^2}{2\vert T_+\vert}
            &
            \text{if }
            E \in \E(T_+) \cap \E(\partial\Omega).
        \end{cases}
    \]
\end{remark}

\begin{remark}[generalization to rectangles]
    \label{rem:quads}
    Theorem~\ref{thm:stability} can be generalized
    to rectangular meshes.
    To this end, let \(\T\) denote a regular triangulation
    into closed rectangles and
    \(Q_k(\T)\) the space of continuous piecewise polynomials of
    partial degree at most \(k \in \N_0\).
    The sharp discrete trace inequality
    from~\cite[Eqn.~(C.20)]{HillewaertPhD}
    for rectangles \(T \in \T\) with edge \(E \in \E(T)\)
    reads,
    for all \(q_k \in Q_k(T)\),
    \[
        \Vert q_k \Vert_{L^2(E)}^2
        \leq
        \frac{(k+1)^2 h_E}{\vert T \vert}
        \Vert q_k \Vert_{L^2(T)}^2.
    \]
    Then, for edge \(E \in \E\) and \(a > 1\),
    an analog of Theorem~\ref{thm:stability}
    leads to the following choice
    of the edge-dependent parameter
    \begin{equation}
        \label{eq:sigma_rectangles}
        \sigma_{\IP, E}
        \coloneqq
        \begin{cases}
            \displaystyle
            a (k - 1)^2 h_E^2
            \bigg(
                \frac{1}{\vert T_+ \vert}
                +
                \frac{1}{\vert T_- \vert}
            \bigg)
            & \text{if }
            E = \partial T_+ \cap \partial T_-
            \in \E(\Omega),
            \\
            \displaystyle
            \frac{4 a (k - 1)^2 h_E^2}{\vert T_+ \vert}
            & \text{if }
            E \in \E(T_+) \cap \E(\partial\Omega).
        \end{cases}
    \end{equation}
\end{remark}

The proof of the Theorem~\ref{thm:stability} employs
a discrete trace inequality.

\begin{lemma}[{\cite[Thm.~3]{MR1986022}}]
	\label{lem:discrete_trace}
	Every polynomial \(q_k \in P_k(T)\)
	of degree at most \(k \in \N_0\) satisfies
	\[
	\Vert q_k \Vert_{L^2(E)}^2
	\leq
	\frac{(k+2)(k+1) h_E}{2 \vert T \vert}
	\Vert q_k \Vert_{L^2(T)}^2.
	\]
	The multiplicative constant is sharp
	in the sense that it cannot be replaced
	by any smaller constant in the absence of further
	conditions on the function \(q_k\).
\end{lemma}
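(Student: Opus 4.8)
The plan is to reduce to a reference configuration by affine invariance and then exploit an orthogonal polynomial basis adapted to the edge. First I would fix an affine diffeomorphism \(F\) from a reference triangle \(\hat T\) with a distinguished edge \(\hat E\) onto \(T\) mapping \(\hat E\) onto \(E\), and set \(\hat q_k \coloneqq q_k \circ F\). The change of variables yields \(\Vert q_k \Vert_{L^2(T)}^2 = (\vert T\vert/\vert \hat T\vert)\,\Vert \hat q_k \Vert_{L^2(\hat T)}^2\) and \(\Vert q_k \Vert_{L^2(E)}^2 = (h_E/\vert \hat E\vert)\,\Vert \hat q_k \Vert_{L^2(\hat E)}^2\), so the quotient \(\Vert q_k \Vert_{L^2(E)}^2/\Vert q_k \Vert_{L^2(T)}^2\) equals the reference quotient times the explicit geometric factor \(h_E\,\vert \hat T\vert/(\vert \hat E\vert\,\vert T\vert)\). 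Hence it suffices to compute the best constant \(C_k \coloneqq \max_{0\neq p\in P_k(\hat T)} \Vert p\Vert_{L^2(\hat E)}^2/\Vert p\Vert_{L^2(\hat T)}^2\). Choosing the unit right triangle \(\hat T\) with distinguished edge \(\hat E\) of unit length gives \(\vert \hat T\vert/\vert \hat E\vert = 1/2\), so the claim reduces to the reference identity \(C_k = (k+1)(k+2)\).

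The key step is to introduce the Koornwinder--Dubiner orthogonal basis \(\{P_{mn}\}_{m+n\le k}\) of \(P_k(\hat T)\), built from Jacobi polynomials in the collapsed (Duffy) coordinates, with \(\hat E\) placed on the face where the second collapsed coordinate attains its extreme value. Two structural properties make this basis ideal: the modes are \(L^2(\hat T)\)-orthogonal, and on \(\hat E\) every mode with \(n\ge 1\) vanishes while each \(P_{m0}\) restricts to a multiple of the degree-\(m\) Legendre polynomial on \(\hat E\). Expanding \(p=\sum_{m,n}c_{mn}P_{mn}\), triangle orthogonality gives \(\Vert p\Vert_{L^2(\hat T)}^2=\sum_{m,n}\vert c_{mn}\vert^2\,\Vert P_{mn}\Vert_{L^2(\hat T)}^2\), whereas the vanishing of the \(n\ge1\) traces together with edge orthogonality of the Legendre polynomials gives \(\Vert p\Vert_{L^2(\hat E)}^2=\sum_{m}\vert c_{m0}\vert^2\,\Vert P_{m0}\Vert_{L^2(\hat E)}^2\). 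The Rayleigh quotient therefore decouples, and its supremum over \(p\) is simply the largest of the single-mode ratios \(\Vert P_{m0}\Vert_{L^2(\hat E)}^2/\Vert P_{m0}\Vert_{L^2(\hat T)}^2\) for \(0\le m\le k\).

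It then remains to evaluate these single-mode ratios from the standard Jacobi normalizations, which produce \((m+1)(m+2)\), and to observe that this is increasing in \(m\) so that the maximum is attained at the top mode \(m=k\); the monotonicity reflects that higher-degree modes concentrate more mass near the edge. This yields \(C_k=(k+1)(k+2)\), and multiplying by the geometric factor gives the asserted constant \((k+1)(k+2)h_E/(2\vert T\vert)\). Sharpness is immediate from the construction, since equality holds for the \(q_k\) corresponding to the single maximizing mode \(P_{k0}\), so the constant cannot be lowered without further hypotheses on \(q_k\). I expect the main obstacle to be the orthogonality bookkeeping in the second step, namely verifying that the edge trace annihilates all \(n\ge1\) modes and that the surviving \(n=0\) modes are mutually orthogonal on \(\hat E\); once this structural fact is secured, the remaining work is the routine evaluation of Jacobi norms and an elementary monotonicity check.
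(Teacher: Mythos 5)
Your affine reduction and the scaling bookkeeping are correct, and the reference target \(C_k=(k+1)(k+2)\) on the unit right triangle is the right quantity to compute; note that the paper itself offers no proof here but cites Warburton and Hesthaven \cite{MR1986022}, whose argument your sketch partially parallels. The genuine gap is your structural claim about the Koornwinder--Dubiner basis. Writing the modes in collapsed coordinates as \(\psi_{mn}(a,b)=P_m(a)\bigl(\tfrac{1-b}{2}\bigr)^m P_n^{(2m+1,0)}(b)\), the restriction to the edge \(\{b=-1\}\) is \((-1)^n P_m(a)\), because \(P_n^{(2m+1,0)}(-1)=(-1)^n\neq 0\); the factor \(\bigl(\tfrac{1-b}{2}\bigr)^m\) vanishes only at the collapsed \emph{vertex} \(b=1\), never on a whole edge, and the other two edges behave no better. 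So the modes with \(n\geq 1\) do \emph{not} vanish on \(\hat E\), the Rayleigh quotient does not decouple into single-mode ratios, and both your maximization and your sharpness argument fail. Quantitatively, with \(\Vert\psi_{mn}\Vert_{L^2(\hat T)}^2=\tfrac{2}{2m+1}\cdot\tfrac{1}{m+n+1}\) and \(\Vert\psi_{mn}\Vert_{L^2(\hat E)}^2=\tfrac{2}{2m+1}\), the normalized single-mode trace ratio is \(m+n+1\), not \((m+1)(m+2)\); in particular the top mode \((m,n)=(k,0)\) achieves only \(k+1\), strictly below the sharp constant for every \(k\geq 1\), so \(P_{k0}\) is not an extremizer.

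The repair uses the block structure instead: for fixed \(m\) all traces are proportional to the same Legendre polynomial \(P_m\), which is where the edge-orthogonality you want actually lives. Expanding \(p=\sum_{m,n}c_{mn}\psi_{mn}\) in \(L^2(\hat T)\)-normalized modes gives \(\Vert p\Vert_{L^2(\hat E)}^2=\sum_m\bigl(\sum_n c_{mn}(-1)^n\sqrt{m+n+1}\,\bigr)^2\), and the Cauchy--Schwarz inequality within each \(m\)-block yields the sharp block constant \(\sum_{n=0}^{k-m}(m+n+1)=\tfrac{(k+1)(k+2)-m(m+1)}{2}\), maximized at \(m=0\) with value \(\tfrac{(k+1)(k+2)}{2}\), which rescales to the asserted constant \(\tfrac{(k+1)(k+2)}{2}\,h_E/\vert T\vert\). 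Equality holds for the full \(m=0\) combination \(c_{0n}\propto(-1)^n\sqrt{n+1}\), an extremal polynomial that is constant along directions parallel to \(E\) --- a sum over all \(n\), not a single mode. The one-dimensional case already exhibits this phenomenon: the endpoint bound \(u(1)^2\leq\tfrac{(k+1)^2}{2}\Vert u\Vert_{L^2(-1,1)}^2\) is attained by a combination of all Legendre polynomials up to degree \(k\), while any single mode gives only \(\tfrac{2k+1}{2}\).
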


\begin{proof}[Proof of Theorem~\ref{thm:stability}]
    The point of departure is the difference,
    for every $\kappa > 0$,
    \begin{equation}
        \label{eq:stability_kappa}
        \begin{split}
            A_h(v_\IP, v_\IP) - \kappa \Vert v_\IP \Vert_h^2
            &=
            (1-\kappa) \vvvert v_\IP \vvvert_\PW^2
            - 2 \mathcal{J}(v_\IP, v_\IP) \\
            &\hphantom{{}={}}+ (1 - \kappa) \sum_{E \in \mathcal{E}}
            \frac{\sigma_{\IP,E}}{h_E}
            \Vert [\nabla v_\IP \cdot \nu_E]_E \Vert_{L^2(E)}^2.
        \end{split}
    \end{equation}

    \emph{The first step} of the proof
    bounds the jump term $\mathcal{J}$
    with the weighted Young inequality,
    for every $\varepsilon > 0$,
    \[
        2 \vert \mathcal{J}(v_\IP, v_\IP) \vert
         \leq
        \sum \limits_{E \in \mathcal{E}}
        \Bigl(
            \frac{\varepsilon \sigma_{\IP,E}}{h_E}
            \Vert [\nabla v_\IP \cdot \nu_E]_E \Vert_{L^2(E)}^2
            + \frac{h_E}{\varepsilon \sigma_{\IP,E}}
            \Vert \langle
               (\D^2_\PW v_\IP \, \nu_E)  \cdot \nu_E
            \rangle_E \Vert_{L^2(E)}^2
        \Bigr).
    \]
    Inserting this estimate into~\eqref{eq:stability_kappa}
    confirms the lower bound
    \begin{equation}
        \label{eq:lower_bound_Ah}
        \begin{split}
            &
            (1-\kappa) \vvvert v_\IP \vvvert_\PW^2
            + (1-\kappa) \sum_{E \in \mathcal{E}}
            \frac{\sigma_{\IP,E}}{h_E}
            \Vert [\nabla v_\IP \cdot \nu_E]_E \Vert_{L^2(E)}^2\\
            &\hphantom{{}={}}
            - \sum_{E \in \mathcal{E}}
           \Bigl(
                \frac{\varepsilon \sigma_{\IP,E}}{h_E}
                \Vert [\nabla v_\IP \cdot \nu_E]_E \Vert_{L^2(E)}^2
                + \frac{h_E}{\varepsilon \sigma_{\IP,E}}
                \Vert \langle
                (\D^2_\PW v_\IP \, \nu_E)  \cdot \nu_E
                \rangle_E \Vert_{L^2(E)}^2
            \Bigr) \\
            &\hphantom{{}={}}\leq A_h(v_\IP, v_\IP) - \kappa \Vert v_\IP \Vert_h^2.
        \end{split}
    \end{equation}

    \emph{The second step}
    estimates the average $\langle (\D^2_\PW v_\IP \, \nu_E)  \cdot \nu_E \rangle_E$.
    Since the piecewise second derivative $\D^2_\PW v_{\IP}$
    of any function $v_\IP \in V_h$
    is a polynomial of degree \(k - 2\)
    on each triangle \(T\) with edge \(E \in \E(T)\),
    Lemma~\ref{lem:discrete_trace} implies that
    \begin{equation}
        \label{eq:Hessian_estimate}
        \Vert (\D^2_\PW v_\IP \, \nu_E)  \cdot \nu_E \vert_{T} \Vert_{L^2(E)}^2
        \leq
        \frac{k(k-1)h_E}{2\vert T \vert}
        \Vert (\D^2_\PW v_\IP \, \nu_E)  \cdot \nu_E \Vert_{L^2(T)}^2.
    \end{equation}
    In accordance with the bilinear form $a_\PW$ from \eqref{eq:stiffness_and_jump},
    the $L^2$ norm employs the Frobenius norm \(| \, \cdot \, |\) for matrix-valued function.
    A Cauchy-Schwarz inequality in $\R^2$, the submultiplicativity of the Frobenius norm,
    and the normalization $| \nu_E | = 1$ prove
    \[
        |
        (\D^2_\PW v_\IP(x) \, \nu_E)  \cdot \nu_E
        |^2
        \leq
        |
        \D^2 v_\IP(x)
        |^2
    \]
    for almost every $x \in E$.
     For an interior edge
    \(E=\partial T_+ \cap \partial T_- \in \E(\Omega)\)
    with the neighboring triangles \(T_+\) and \(T_-\), this and
    the definition of the average from~\eqref{eq:jump}
    \[
        \Vert \langle
           (\D^2_\PW v_\IP \, \nu_E)  \cdot \nu_E
        \rangle_E \Vert_{L^2(E)}^2
        \leq
        \frac{1}{4}
        \Vert
            \D^2 v_\IP \vert_{T_+} + \D^2 v_\IP \vert_{T_-}
        \Vert_{L^2(E)}^2.
    \]
    The combination with a triangle inequality,
    another weighted Young inequality with \(\alpha > 0\),
    and~\eqref{eq:Hessian_estimate}
    results in
    \begin{align*}
    	\Vert \langle (\D^2_\PW v_\IP \, \nu_E)  \cdot \nu_E&
    	\rangle_E \Vert_{L^2(E)}^2 \\
    	 &\leq
    	\frac{k(k-1)h_E}{8}
    	\Bigl(
    	\frac{1 + \alpha}{\vert T_+ \vert}
    	\Vert \D^2 v_\IP \Vert_{L^2(T_+)}^2
    	+
    	\frac{1 + 1/\alpha}{\vert T_- \vert}
    	\Vert \D^2 v_\IP \Vert_{L^2(T_-)}^2
    	\hspace{-3.1pt}
    	\Bigr).
    \end{align*}
    Hence
    the optimal value
    \(\alpha = \vert T_+ \vert / \vert T_- \vert\)
    leads to
    \[
        \Vert \langle (\D^2_\PW v_\IP \, \nu_E)  \cdot \nu_E
        \rangle_E \Vert_{L^2(E)}^2
        \leq
        \frac{k(k-1)h_E}{8}
        \bigg(
            \frac{1}{\vert T_+ \vert}
            + \frac{1}{\vert T_- \vert}
        \bigg)
        \Vert \D^2_\PW v_\IP \Vert_{L^2(\omega_E)}^2.
    \]
    The multiplication with
    \(h_E / (\varepsilon \sigma_{\IP, E})\)
    and the definition of \(\sigma_{\IP, E}\)
    in~\eqref{eq:definition_sigma} show
    \begin{equation}
        \label{eq:average_estimate}
        \frac{h_E}{\varepsilon \sigma_{\IP,E}}
        \Vert \langle (\D^2_\PW v_\IP \, \nu_E)  \cdot \nu_E
        \rangle_E \Vert_{L^2(E)}^2
        \leq
        \frac{1}{3 a \varepsilon}
        \Vert \D^2_\PW v_\IP \Vert_{L^2(\omega_E)}^2.
    \end{equation}

    For a boundary edge \(E \in \E(\partial \Omega)\) with
    adjacent triangle \(T_+ \in \T\),
    the definition of the average,
    the equality \(\vert \nu_E \vert = 1\),
    and~\eqref{eq:Hessian_estimate} verify
    \begin{align*}
        \Vert \langle
        (\D^2_\PW v_\IP \, \nu_E)  \cdot \nu_E
        \rangle_E \Vert_{L^2(E)}^2
        &=
        \Vert
            (\D^2_\PW v_\IP \, \nu_E)  \cdot \nu_E
        \Vert_{L^2(E)}^2
        \leq
        \Vert
            \D^2_\PW v_\IP
        \Vert_{L^2(E)}^2
        \\
        &\leq
        \frac{k(k-1)h_E}{2\vert T_+ \vert}
        \Vert \D^2 v_\IP \Vert_{L^2(T_+)}^2.
    \end{align*}
    Consequently,
    the multiplication with
    \(h_E / (\varepsilon \sigma_{\IP, E})\)
    and the definition of the penalty parameter \(\sigma_{\IP, E}\)
    result in the analogous estimate~\eqref{eq:average_estimate}
    for \(E \in \E(\partial\Omega)\).

    The sum of~\eqref{eq:average_estimate} over all edges \(E \in \E\)
    and the finite overlap of the edge patches
    $(\omega(E): E \in \mathcal{E})$ lead to
    \begin{equation}
        \label{eq:bound_hessian_normal}
        \sum_{E \in \mathcal{E}}
        \frac{h_E}{\varepsilon\sigma_{\IP,E}}
        \Vert \langle
            (\D^2_\PW v_\IP \, \nu_E)  \cdot \nu_E
        \rangle_E \Vert_{L^2(E)}^2
        \leq
        \frac{1}{a \varepsilon}
        \vvvert v_\IP \vvvert_\PW^2.
    \end{equation}

    \textit{The third step} concludes the proof with
    a combination of the lower bound~\eqref{eq:lower_bound_Ah}, and
    the estimate \eqref{eq:bound_hessian_normal} for
    \begin{equation}
        \label{eq:final_lower_bound}
        \begin{split}
            \left(1-\kappa - \frac{1}{a \varepsilon}\right)
            \vvvert v_\IP \vvvert_\PW^2
            + (1-\kappa - \varepsilon)
            \sum_{E \in \mathcal{E}}
            \frac{\sigma_{\IP,E}}{h_E}
            &\Vert [\nabla v_\IP \cdot \nu_E]_E \Vert_{L^2(E)}^2
            \\ &\hphantom{{}={}}\leq A_h(v_\IP, v_\IP) - \kappa \Vert v_\IP \Vert_h^2.
        \end{split}
    \end{equation}
    Every choice of
    \(0 < \kappa \leq \min\{1 - 1/(a \varepsilon), 1-\varepsilon\}\)
    leads to a nonnegative lower bound in~\eqref{eq:final_lower_bound}, and so proves
    the claim~\eqref{eq:stability};
     \(\varepsilon = 1/ \sqrt{a}\)
    results in \(\kappa = 1 - 1/\sqrt{a}\).
\end{proof}

\section{Numerical experiments}
\label{sec:numerical_experiments}

This section investigates an implementation of the C0IP
method from~\eqref{eq:discrete_form}
using an unpublished in-house software package
\texttt{AFEM2} in MATLAB version 9.9.0.1718557 (R2020b) Update 6.
The investigation considers
four computational benchmark examples with domains
displayed in Figure~\ref{fig:initialtriangulations}.
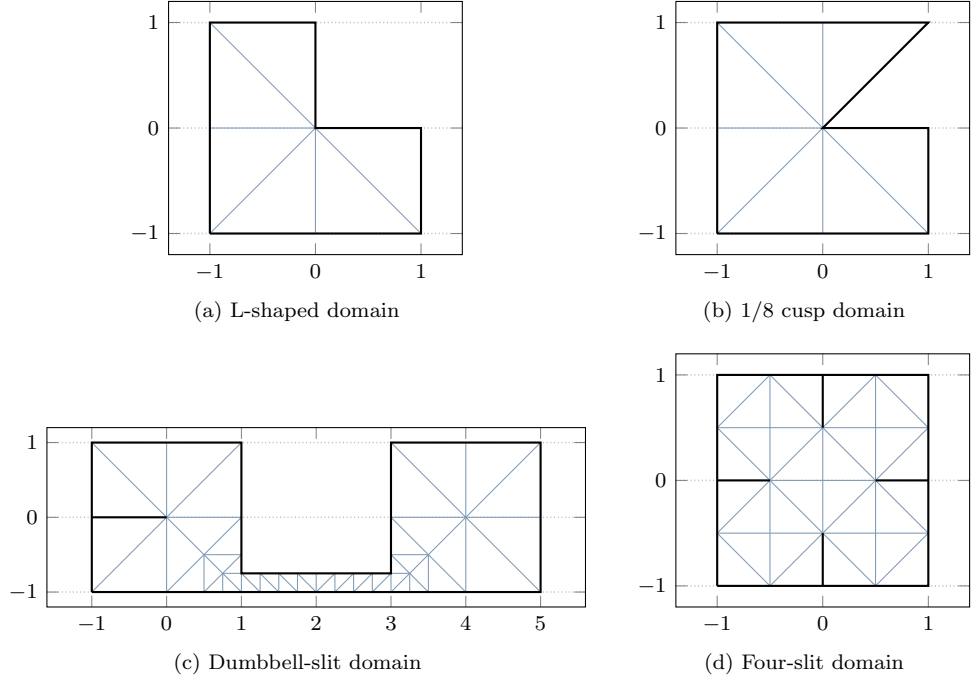
\begin{figure}
    \centering
    \hspace{1.5cm}
    \subfloat[L-shaped domain]{%
        \label{fig:Lshape}%
        \begin{tikzpicture}
		\colorlet{colEta}{HUred}
		\pgfplotsset{%
			eta/.style = {%
				colEta,%
				mark = *,%
				every mark/.append style={%
					solid,%
					scale = 1.1,%
					fill  = colEta!60!white
				}%
			}%
		}
		\pgfplotsset{%
			uniform/.style = {%
				dashed,
				every mark/.append style = {fill = black!20!white}
			},%
			adaptive/.style = {solid}%
		}

		\begin{axis}[%
			axis equal,%
			width            = 0.3\textwidth,%
			ymajorgrids      = true,%
			font             = \footnotesize,%
			legend style     = {
				legend columns = 1,
				legend pos     = outer north east,
				font           = \small
			}%
			]
		\addplot[HUblue!50!white] coordinates {(1,-1)(0,0)(0,-1)};

		\addplot[HUblue!50!white] coordinates {(-1,-1)(0,0)(-1,0)};

		\addplot[HUblue!50!white] coordinates {(-1,1)(0,0)};

		\addplot[black,thick] coordinates {(-1,-1)(-1,1)(0,1)(0,0)(1,0)(1,-1)(-1,-1)};
		\end{axis}
\end{tikzpicture}%
    }%
        \hfill
    \subfloat[1/8 cusp domain]{%
        \label{fig:cusp}%
        \begin{tikzpicture}
		\colorlet{colEta}{HUred}
		\pgfplotsset{%
			eta/.style = {%
				colEta,%
				mark = *,%
				every mark/.append style={%
					solid,%
					scale = 1.1,%
					fill  = colEta!60!white
				}%
			}%
		}
		\pgfplotsset{%
			uniform/.style = {%
				dashed,
				every mark/.append style = {fill = black!20!white}
			},%
			adaptive/.style = {solid}%
		}

		\begin{axis}[%
			axis equal,%
			width            = 0.3\textwidth,%
			ymajorgrids      = true,%
			font             = \footnotesize,%
			legend style     = {
				legend columns = 1,
				legend pos     = outer north east,
				font           = \small
			}%
			]
       			\addplot[HUblue!50!white] coordinates {(1,-1)(0,0)(0,-1)};

       			\addplot[HUblue!50!white] coordinates {(-1,-1)(0,0)(-1,0)};

                        \addplot[HUblue!50!white] coordinates {(-1,1)(0,0)(0,1)};

       			\addplot[black,thick] coordinates {(-1,-1)(-1,1)(1,1)(0,0)(1,0)(1,-1)(-1,-1)};
		\end{axis}
\end{tikzpicture}%
    }

    \subfloat[Dumbbell-slit domain]{%
        \label{fig:dumbbell}%
        \begin{tikzpicture}
		\colorlet{colEta}{HUred}
		\pgfplotsset{%
			eta/.style = {%
				colEta,%
				mark = *,%
				every mark/.append style={%
					solid,%
					scale = 1.1,%
					fill  = colEta!60!white
				}%
			}%
		}
		\pgfplotsset{%
			uniform/.style = {%
				dashed,
				every mark/.append style = {fill = black!20!white}
			},%
			adaptive/.style = {solid}%
		}

		\begin{axis}[%
			width            = 0.55\textwidth,%
			ymajorgrids      = true,%
			font             = \footnotesize,%
			axis equal image,%
			legend style     = {
				legend columns = 1,
				legend pos     = outer north east,
				font           = \small
			}%
			]
			\addplot[HUblue!50!white] coordinates {(-1,-1)(0,0)(0,-1)(1,0)(0,0)(1,1)};
			\addplot[HUblue!50!white] coordinates {(-1,1)(0,0)(0,1)};
			\addplot[HUblue!50!white] coordinates {(0.5,-1)(0.5,-0.5)(1,-0.5)(0.5,-1)};
			\addplot[HUblue!50!white] coordinates {(0.75,-1)(0.75,-0.75)(1,-0.75)};

			\addplot[HUblue!50!white] coordinates {(0,0)(1,-1)(1,-0.75)(1.25,-1)(1.25,-0.75)(1.5,-1)(1.5,-0.75)(1.75,-1)(1.75,-0.75)(2,-1)(2,-0.75)};
			\addplot[HUblue!50!white] coordinates {(2,-1)(2.25,-0.75)(2.25,-1)(2.5,-0.75)(2.5,-1)(2.75,-0.75)(2.75,-1)(3,-0.75)(3,-1)};

			\addplot[HUblue!50!white] coordinates {(5,-1)(4,0)(4,-1)(3,0)(4,0)(3,1)};
			\addplot[HUblue!50!white] coordinates {(3,-1)(5,1)};
			\addplot[HUblue!50!white] coordinates {(3,-0.75)(3.25,-0.75)(3.25,-1)};
			\addplot[HUblue!50!white] coordinates {(3.5,-1)(3.5,-0.5)(3,-0.5)(3.5,-1)};
			\addplot[HUblue!50!white] coordinates {(5,0)(4,0)(4,1)};

			\addplot[black,thick] coordinates {(-1,-1)(5,-1)(5,1)(3,1)(3,-0.75)(1,-0.75)(1,1)(-1,1)(-1,-1)};
			\addplot[black,thick] coordinates {(-1,0)(0,0)};
		\end{axis}
\end{tikzpicture}%
    }
        \hfill
    \subfloat[Four-slit domain]{%
        \label{fig:fourslit}%
        \begin{tikzpicture}
		\colorlet{colEta}{HUred}
		\pgfplotsset{%
			eta/.style = {%
				colEta,%
				mark = *,%
				every mark/.append style={%
					solid,%
					scale = 1.1,%
					fill  = colEta!60!white
				}%
			}%
		}
		\pgfplotsset{%
			uniform/.style = {%
				dashed,
				every mark/.append style = {fill = black!20!white}
			},%
			adaptive/.style = {solid}%
		}

		\begin{axis}[%
			axis equal,%
			width            = 0.3\textwidth,%
			ymajorgrids      = true,%
			font             = \footnotesize,%
			legend style     = {
				legend columns = 1,
				legend pos     = outer north east,
				font           = \small
			}%
			]
			\addplot[HUblue!50!white] coordinates {(-1,-0.5)(1,-0.5)};
			\addplot[HUblue!50!white] coordinates {(-1,0.5)(1,0.5)};

			\addplot[HUblue!50!white] coordinates {(-0.5,-1)(-0.5,1)};
			\addplot[HUblue!50!white] coordinates {(0.5,-1)(0.5,1)};

			\addplot[HUblue!50!white] coordinates {(-0.5,0)(0.5,0)};
			\addplot[HUblue!50!white] coordinates {(0,-0.5)(0,0.5)};

			\addplot[HUblue!50!white] coordinates {(-0.5,-1)(1,0.5)(0.5,1)(-1,-0.5)(-0.5,-1)};
			\addplot[HUblue!50!white] coordinates {(0.5,-1)(1,-0.5)(-0.5,1)(-1,0.5)(0.5,-1)};

			\addplot[black,thick] coordinates {(-1,-1)(1,-1)(1,1)(-1,1)(-1,-1)};

			\addplot[black,thick] coordinates {(0,-1)(0,-0.5)};
			\addplot[black,thick] coordinates {(0,0.5)(0,1)};
			\addplot[black,thick] coordinates {(-1,0)(-0.5,0)};
			\addplot[black,thick] coordinates {(0.5,0)(1,0)};
		\end{axis}
\end{tikzpicture}%
    }

    \caption{%
        Visualization of the initial triangulations $\mathcal{T}_0$
        for the four benchmark problems.
    }
    \label{fig:initialtriangulations}
\end{figure}

\subsection{Adaptive mesh-refinement}
\label{sec:AFEM}

The experiments include a comparison
of uniform and adaptive mesh-refinement
driven by the a~posteriori error estimator
\(\eta^2(\T) \coloneqq \sum_{T \in \T} \eta^2(T)\)
with, for all \(T \in \T\),
\begin{equation}
    \label{eq:estimator}
    \begin{split}
        \eta^2(T)
        &\coloneqq
        \Vert h_T^2 (f - \Delta^2 u_\IP) \Vert_{L^2(T)}^2
        +
        \sum_{E \in \E(T)}
        \frac{\sigma_{\IP,E}^2}{h_E}
        \Vert [ \nabla u_\IP ]_E \cdot \nu_E \Vert_{L^2(E)}^2\\
        &\hphantom{{}={}}
        + \!
        \sum_{E \in \E(T) \cap \mathcal{E}(\Omega)}
        h_E
        \Vert [(\D^2_\PW v_\IP \, \nu_E)  \cdot \nu_E]_E \Vert_{L^2(E)}^2\\
        &\hphantom{{}={}}
         + \!
        \sum_{E \in \E(T) \cap \mathcal{E}(\Omega)}
        h_E^3
        \Vert [\partial(\Delta v_\IP) / \partial{\nu_E}]_E \Vert_{L^2(E)}^2
    \end{split}
\end{equation}
from~\cite[Sect.~7.1]{MR3051409}.
The software employs the D\"orfler marking strategy~\cite{MR1393904} and
newest-vertex bisection according to~\cite{MR2353951}
in the adaptive Algorithm~\ref{alg}
to compute a sequence of nested triangulations
\((\T_\ell)_{\ell \in \N_0}\).
The marking strategy with minimal cardinality of
the set \(\mathcal{M}_\ell\) of marked triangles
can be realized in linear computational complexity
\cite{MR4136545}.
\begin{algorithm}
    \begin{algorithmic}
        \Require
        regular triangulation $\T_0$ and bulk parameter $0 < \theta \leq 1$.
        \For{$\ell = 0,1,2,\dots$}
        \State \textbf{Solve} \eqref{eq:discrete_form} with respect to triangulation
        $\T_\ell$ for solution $u_\ell \in S^k_0(\T_\ell)$.
        \State \textbf{Compute} refinement indicator $\eta^2(\T_\ell, T)$
        from~\eqref{eq:estimator}
        for all \(T \in \T_\ell\).
        \State \textbf{Mark} a minimal subset
        $\mathcal{M}_\ell \subseteq \T_\ell$
        by the D\"{o}rfler criterion
        \[
            \theta\: \eta^2(\T_\ell)
            \leq
            \sum_{T \in \mathcal{M}_\ell} \eta^2(\T_\ell, T).
        \]
        \State \textbf{Refine} $\T_\ell$ to $\T_{\ell + 1}$
        by newest-vertex bisection
        such that $\mathcal{M}_\ell\subseteq \T_\ell \setminus \T_{\ell+1}$.
        \EndFor
        \Ensure
        sequence of triangulations $\T_\ell$ with $u_\ell$
        and $\eta(\T_\ell)$ for $\ell \in \mathbb{N}_0$.
    \end{algorithmic}
    \caption{Adaptive C\textsuperscript{0} interior penalty method}
    \label{alg}
\end{algorithm}

Another main aspect of the numerical investigation
concerns the influence of the parameter \(a > 1\)
in~\eqref{eq:definition_sigma}
to the stability of the scheme.
To this end,
the principal eigenvalue $\lambda_{1, \ell}$
to the following discrete eigenvalue problem is computed:
Seek \(\Phi_\IP \in V_\ell \coloneqq S^k_0(\T_\ell)\)
with $\Phi_\IP \neq 0$ and \(\mu \in \R\) such that
\begin{equation}
    \label{eq:EVP}
    A_h(\Phi_\IP,v_\IP)
    =
    \mu (a_\PW(\Phi_\IP,v_\IP) + c_\IP(\Phi_\IP,v_\IP))
    \quad
    \text{for all }
    v_\IP \in V_\ell.
\end{equation}
The Rayleigh-Ritz principle shows that
\(\kappa = 1 - 1/\sqrt{a}\) from Theorem~\ref{thm:stability}
provides a lower bound for
the principal eigenvalue $\lambda_{1, \ell}$. The practical realization of~\eqref{eq:EVP} employs the matrices \(B(\T_\ell), N(\T_\ell) \in \R^{J \times J}\) from \eqref{eq:system_matrix}.

\subsection{Singular solution on L-shaped domain}\label{ex:singular_solution_lshape}

This benchmark considers
the L-shaped domain $\Omega = (-1,1)^2 \setminus [0,1)^2$ with
interior angle $\omega = 3\pi/2$ at the reentrant corner.
This determines the noncharacteristic solution
$\alpha \coloneqq 0.5444837$ to
$\sin^2(\alpha \omega) = \alpha^2 \sin^2(\omega)$ in
\begin{equation}
    \label{eq:angle_component}
    \begin{split}
        g_{\alpha,\omega}(\varphi)
        &=
        \bigg(
            \frac{\sin((\alpha-1)\omega)}{\alpha-1}
            - \frac{\sin((\alpha+1)\omega)}{\alpha+1}
        \bigg)
        \big(
            \cos((\alpha-1)\varphi) - \cos((\alpha+1)\varphi)
        \big) \\
        &-
        \bigg(
            \frac{\sin((\alpha-1)\varphi)}{\alpha-1}
            - \frac{\sin((\alpha+1)\varphi)}{\alpha-1}
        \bigg)
        \big(
            \cos((\alpha-1)\omega) - \cos((\alpha+1)\omega)
        \big).
    \end{split}
\end{equation}
The exact singular solution in polar coordinates
from \cite[p.~107]{MR1173209} reads
\begin{align}
    \label{eq:singularsolutionLshape}
    u(r,\varphi)
    =
    (1 - r^2 \cos^2\varphi)^2 (1 - r^2 \sin^2\varphi)^2
    r^{1+\alpha} g_{\alpha, \omega}\Big(\varphi-\frac{\pi}{2}\Big).
\end{align}
The right-hand side \(f \coloneqq \Delta^2 u\) is computed accordingly.
The reduced regularity of the exact solution $u$
leads to the empirical convergence rate $\alpha/2$
for uniform mesh-refinement displayed by dashed lines in
Figure~\ref{fig:lshape_convergence}.
The adaptive refinement strategy
results in local mesh-refining at the reentrant corner
in Figure~\ref{fig:lshape_mesh}.
For all polynomial degrees \(k = 2, \dots, 5\),
the adaptive algorithm
recovers the optimal convergence rates
with respect to the number of degrees of freedom
(ndof) as displayed by the solid lines in
Figure~\ref{fig:lshape_convergence}.
\begin{figure}
    \centering
    \begin{tikzpicture}
    \begin{axis}[%
        axis equal image,%
        width = 8cm,%
        xmin = -1.1,%
        xmax = 1.1,%
        ymin = -1.1,%
        ymax = 1.1%
    ]

        \addplot graphics [xmin=-1, xmax=1, ymin=-1, ymax=1]
        {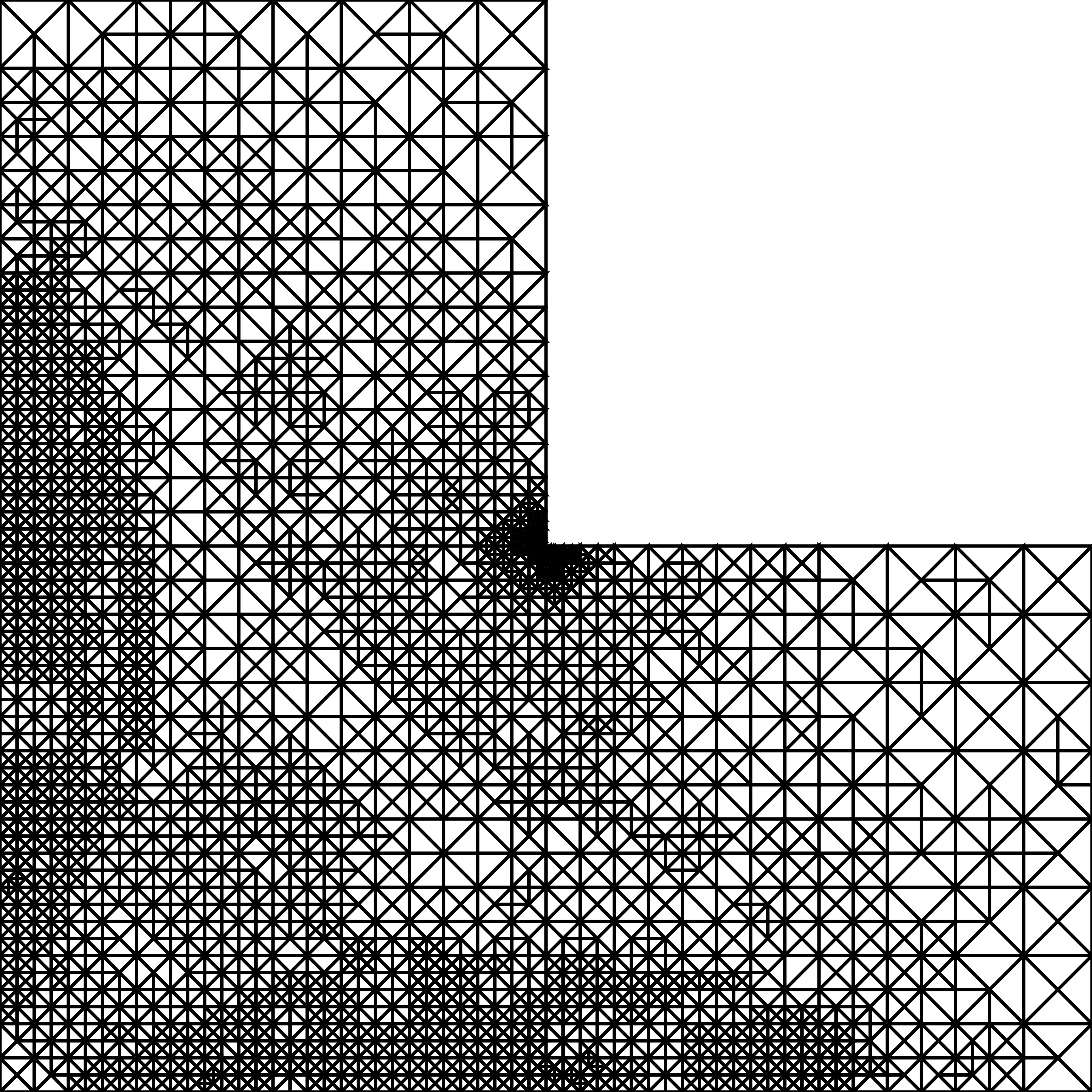};
    \end{axis}
\end{tikzpicture}%
    \caption{%
        Adaptively refined mesh (\(\theta = 0.5\))
        with polynomial degree \(k = 2\) and $10406$ degrees of freedom
        for the L-shaped domain
        in Section~\ref{ex:singular_solution_lshape}.
    }
    \label{fig:lshape_mesh}%
\end{figure}
\begin{figure}
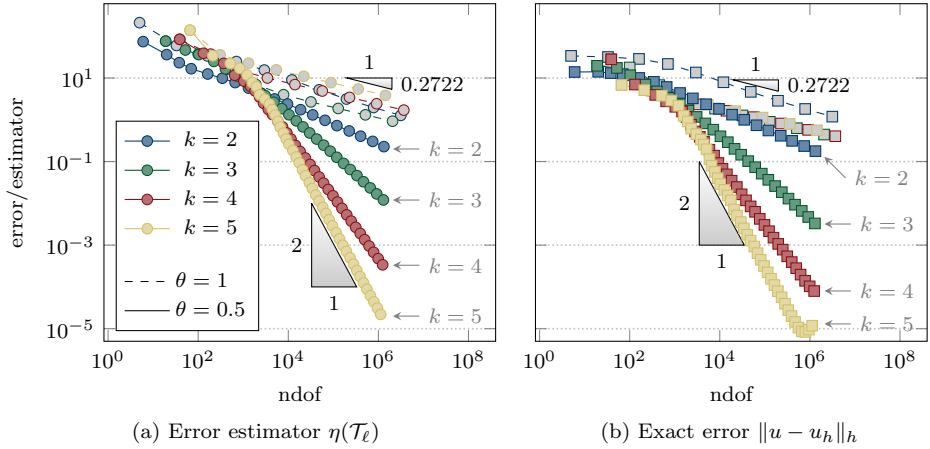

    \centering
    \subfloat[%
        Error estimator \(\eta(\T_\ell)\)
    ]{%
        \label{fig:lshape_eta}%
        \input{numerical_experiments/plot_lshape_convergence_eta}
    }
    \subfloat[%
        Exact error \(\Vert u - u_h \Vert_{h}\)
    ]{%
        \label{fig:lshape_error}%
        \input{numerical_experiments/plot_lshape_convergence_error}
    }
    \caption{%
        Convergence history plots
        for the L-shaped domain
        in Section~\ref{ex:singular_solution_lshape}.
    }
    \label{fig:lshape_convergence}
\end{figure}

\subsection{Singular solution on 1/8 cusp domain}
\label{ex:singular_solution_cusp}

This benchmark problem investigates the \(1/8\) cusp domain
$\Omega \coloneqq (-1,1)^2 \setminus \conv\{(0,0),(1,-1),(1,0)\}$
with interior angle $\omega = 7\pi / 4$
as depicted in Figure~\ref{fig:cusp}.
The right-hand side \(f \coloneqq \Delta^2 u\) is given by
the exact singular solution in polar coordinates
\begin{align}
    \label{eq:singular_solution_cusp}
    u(r,\varphi)
    =
    (1 - r^2 \cos^2\varphi)^2 (1 - r^2 \sin^2\varphi)^2
    r^{1+\alpha} g_{\alpha, \omega}\Big(\varphi - \frac\pi4\Big)
\end{align}
analogously to~\eqref{eq:singularsolutionLshape}
with \(g_{\alpha,\omega}\) from~\eqref{eq:angle_component}
for the parameter $\alpha = 0.50500969$.
The singularity causes an empirical convergence rate $\alpha/2$
for uniform mesh-refinement in Figure~\ref{fig:cusp_convergence},
while the adaptive algorithm recovers the optimal rates
for the considered polynomial degrees.
In the case \(k = 5\),
the highly adapted meshes lead to nearly singular matrices
close to the number of \(10^6\) degrees of freedom.
Undisplayed triangulation plots
confirm the increased adaptive refinement
towards the reentrant corner.
\begin{figure}
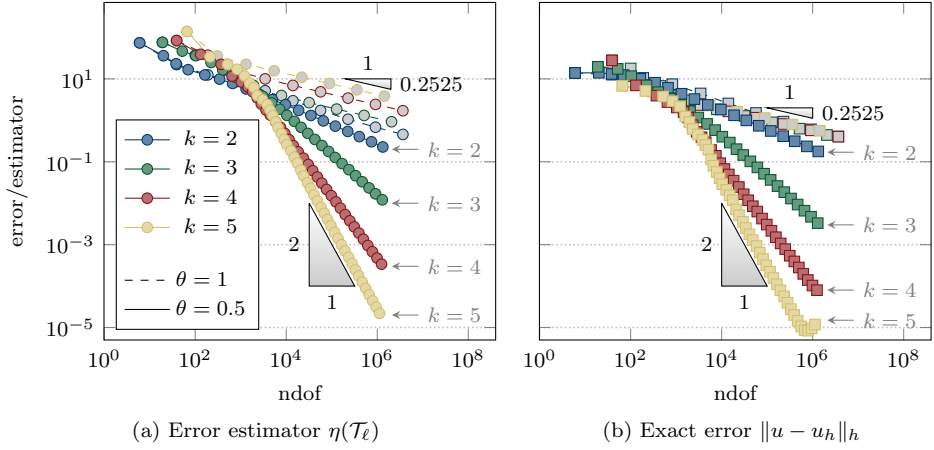

    \centering
    \subfloat[%
        Error estimator \(\eta(\T_\ell)\)
    ]{%
        \label{fig:cusp_eta}%
        \input{numerical_experiments/plot_cusp_convergence_eta}
    }
    \subfloat[%
        Exact error \(\Vert u - u_h \Vert_{h}\)
    ]{%
        \label{fig:cusp_error}%
        \input{numerical_experiments/plot_cusp_convergence_error}
    }
    \caption{%
        Convergence history plots
        for the \(1/8\) cusp domain
        in Section~\ref{ex:singular_solution_cusp}.
    }
    \label{fig:cusp_convergence}
\end{figure}

\subsection{Dumbbell-slit domain}
\label{ex:uniform_load_dumbbell}

The benchmark problem considers the uniform force
$f \equiv 1$ on the dumbbell-slit domain
\[
    \Omega
    =
    \Big(
        (-1,1) \times (-1,5)
        \setminus
        [1,3] \times [-0.75,1)
    \Big)
    \setminus
    (-1,0] \times \{0\}
\]
depicted in Figure~\ref{fig:dumbbell}.
Although \(\Omega\) is no Lipschitz domain,
Figures~\ref{fig:dumbbell_convergence}
confirm the optimal convergence rates with adaptive mesh-refinement.
Undisplayed plots of the adaptive triangulations
show an increased refinement towards the slit
as well as the two reentrant corners at
\((1, -0.75)\) and \((3, -0.75)\).

\begin{figure}
    \centering
    \label{fig:dumbbell_estimator}%
    \input{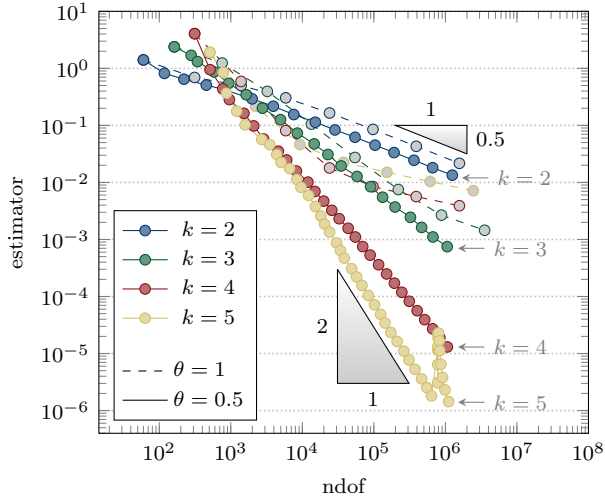}
    \caption{%
        Convergence history plots
        for the dumbbell domain
        in Section~\ref{ex:uniform_load_dumbbell}.
    }
    \label{fig:dumbbell_convergence}
\end{figure}

\subsection{Four-slit domain}
\label{ex:uniform_load_fourslit}
Let
\[
    \Omega
    =
    \Big(
        (-1,1)^2 \setminus
        \big(
            [-1,-0.5) \cup [0.5,1)
        \big)
        \times \{0\}
    \Big)
    \setminus \{0\} \times ([-1,-0.5) \cup [0.5,1))
\]
denote the square domain with one slit
at each edge as depicted in Figure~\ref{fig:fourslit}.
This benchmark also considers the
uniform load $f \equiv 1$.
Similarly to the previous benchmarks,
optimal convergence rates can be observed in the case
the adaptive mesh-refinement for the polynomial degrees
\(k = 2,\dots,5\) in Figure~\ref{fig:fourslit_convergence}.
%
%
Undisplayed triangulation plots indicate that the
adaptive algorithm detects the singularities
of the unknown exact solution
at tips of the four slits.

\begin{figure}
    \centering
    \label{fig:fourslit_estimator}%
    \input{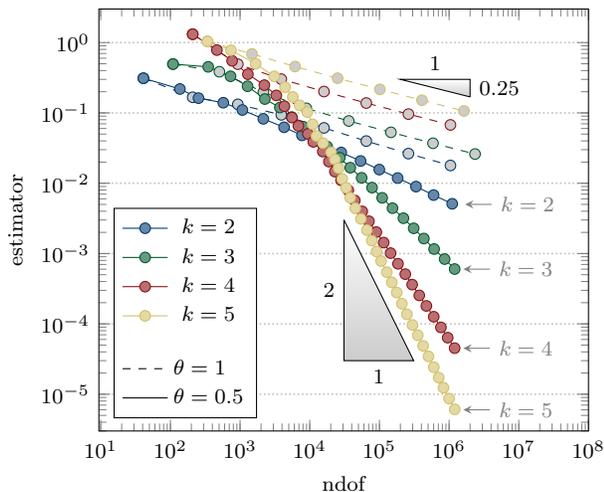}
    \caption{%
        Convergence history plots
        for the fourslit domain
        in Section~\ref{ex:uniform_load_fourslit}.
    }
    \label{fig:fourslit_convergence}
\end{figure}

\subsection{Investigation of parameter \textit{a}}

Given the triangulation \(\T_\ell\)
on the level \(\ell \in \N_0\)
from the adaptive Algorithm~\ref{alg},
the principle eigenvalue \(\lambda_{1, \ell}\)
of the discrete eigenvalue problem~\eqref{eq:EVP}
with respect to \(\T_\ell\)
is
the stability constant in~\eqref{eq:stability}.
The computation applies MATLAB's \texttt{eigs}
function to the eigenvalue problem of the matrices
\(B(\T_\ell)\) and \(N(\T_\ell)\)
from~\eqref{eq:system_matrix}.

Figure~\ref{fig:stability_plots_various_a}
displays $\lambda_{1, \ell}$
in dependence of the number of degrees of freedom
for different choices of \(a > 1\)
using adaptive and uniform mesh-refinement
on the L-shaped domain in Section~\ref{ex:singular_solution_lshape};
a large parameter \(a\) results
in a larger stability constant tending towards $1$.
The guaranteed lower bound \(\kappa = 1 - 1/\sqrt{a}\)
from Theorem~\ref{thm:stability} is always fulfilled.
It is remarkable that even the unjustified choice
\(a = 1\) leads to a seemingly stable discretization
with stability constants between \(0.2\) and \(0.5\) depicted in Figure~\ref{fig:stability_a10}.
Figure~\ref{fig:stability_variable_a} displays the
relation of the parameter \(a\) and the
stability constant \(\lambda_{1,\ell}\)
in more detail.
The computed values for \(\lambda_{1, \ell} < 1\) are slightly larger
than the guaranteed lower bounds.
This indicates little over-stabilization
of the automated choice of $\sigma_{\IP,E}$. Undisplayed numerical experiments do not reveal
any significant difference between the computed stability constants of the four domains in Figure~\ref{fig:initialtriangulations}.

An interesting empirical observation is that
adaptive mesh-refinement and higher-order polynomials
even lead to slightly more stable discretizations.
An over-sta\-bili\-zation as in the theoretical analysis
from \cite{MR2670003} is \emph{not} necessary in the computational benchmarks.
This supports the advantage of the automated choice
of the penalization with $\sigma_{\IP,E}$
for guaranteed stability.

The condition number \(\cond_1(B(\T_\ell))\)
with respect to the 1-norm
of the system matrix \(B(\T_\ell)\)
from~\eqref{eq:system_matrix} depends on \(a\)
as displayed in Figure~\ref{fig:condition_plots_various_k}.
The presented lower bounds for \(\cond_1(B(\T_\ell))\)
are computed using MATLAB's \texttt{condest} function.
Naturally, the number of degrees of freedom has the biggest
influence and the rate of the increase depends on the
polynomial degree \(k\).
While the importance of \(a\) seems neglectible,
the difference between the condition number for \(a = 1\)
and \(a = 100\) is still about two orders of magnitude.
Figure~\ref{fig:condition_variable_a} illustrates this
linear relation between the parameter \(a\) and the condition
number on a fixed uniform mesh of the L-shaped domain.
This suggests that a small \(a\)
is advantageous.

\begin{figure}
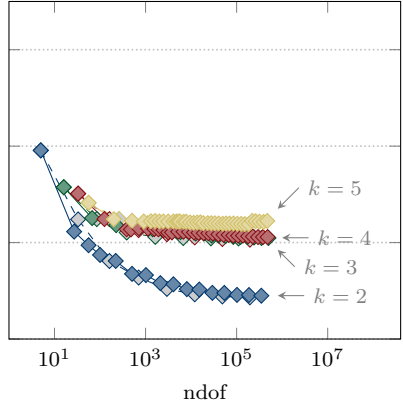
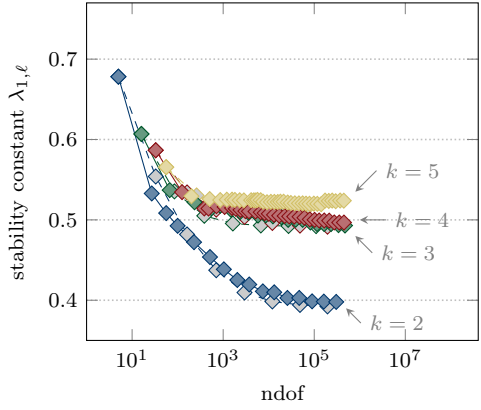
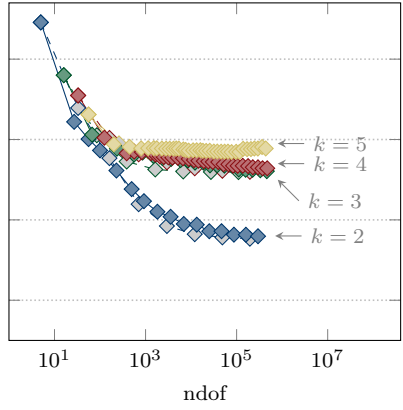
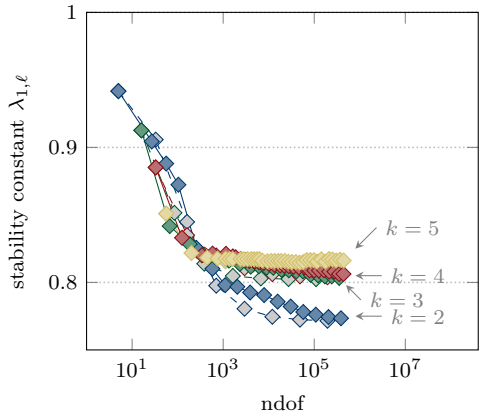
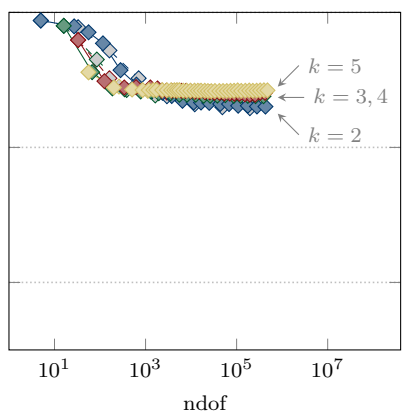

    \centering
    \subfloat[%
        \(a = 1.0\)
        (lower bound \(\kappa = 0\))
    ]{%
        \label{fig:stability_a10}%
        \input{numerical_experiments/plot_stability_a10}%
    }
    \hfill
    \subfloat[%
        \(a = 1.1\)
        (lower bound \(\kappa = 0.0465\))
    ]{%
        \label{fig:stability_a11}%
        \input{numerical_experiments/plot_stability_a11}%
    }

    \subfloat[%
        \(a = 1.5\)
        (lower bound \(\kappa = 0.1835\))
    ]{%
        \label{fig:stability_a15}%
        \input{numerical_experiments/plot_stability_a15}%
    }
    \hfill
    \subfloat[%
        \(a = 2.0\)
        (lower bound \(\kappa = 0.2929\))
    ]{%
        \label{fig:stability_a20}%
        \input{numerical_experiments/plot_stability_a20}%
    }

    \subfloat[%
        \(a = 10.0\)
        (lower bound \(\kappa = 0.6838\))
    ]{%
        \label{fig:stability_a100}%
        \input{numerical_experiments/plot_stability_a100}%
    }
    \hfill
    \subfloat[%
        \(a = 100.0\)
        (lower bound \(\kappa = 0.9\))
    ]{%
        \label{fig:stability_a1000}%
        \input{numerical_experiments/plot_stability_a1000}%
    }

    \caption{%
        Plot of stability constant $\lambda_{1, \ell}$ with
        various selections of parameter \(a\) for the L-shaped domain
        in Subsection~\ref{ex:singular_solution_lshape}.
    }
    \label{fig:stability_plots_various_a}
\end{figure}



\begin{figure}
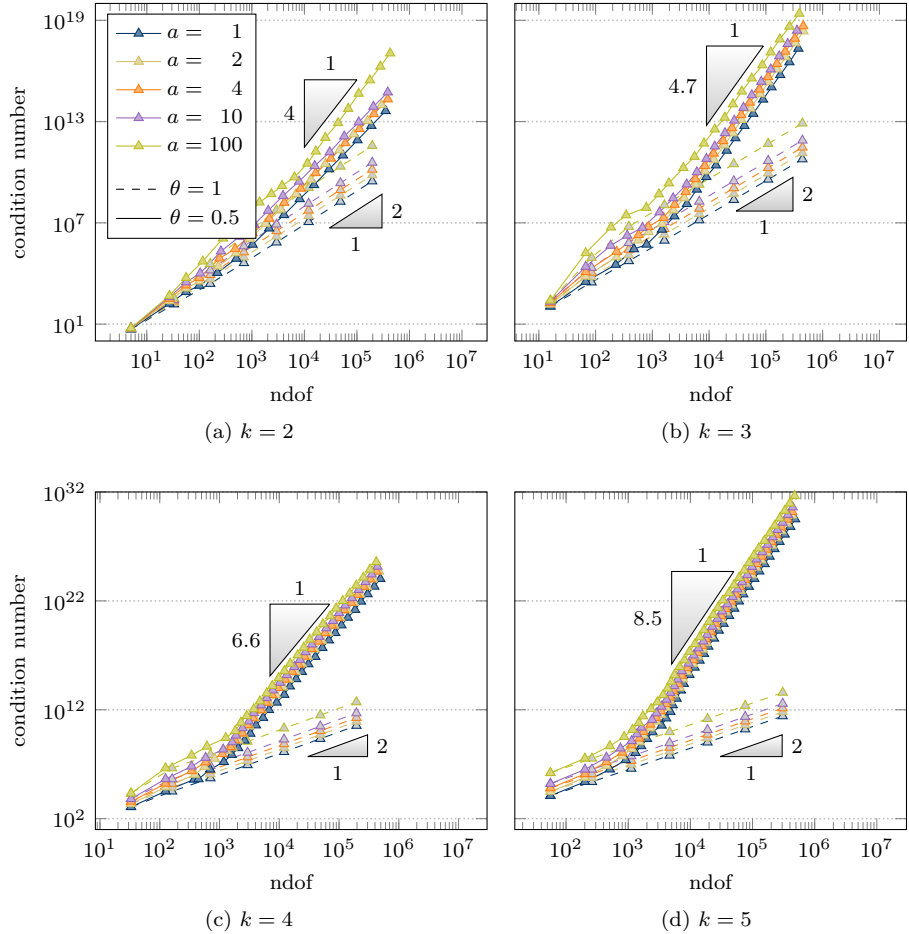

	\centering
	\subfloat[%
	\(k = 2\)
	]{%
		\label{fig:condition_k2}%
		\input{numerical_experiments/plot_condition_k2}%
	}
	\subfloat[%
	\(k = 3\)
	]{%
		\label{fig:condition_k3}%
		\input{numerical_experiments/plot_condition_k3}%
	}
	\hfill
	\subfloat[%
	\(k = 4\)
	]{%
		\label{fig:condition_k4}%
		\input{numerical_experiments/plot_condition_k4}%
	}
	\subfloat[%
	\(k = 5\)
	]{%
		\label{fig:condition_k5}%
		\input{numerical_experiments/plot_condition_k5}%
	}

	\caption{%
		Plot of condition number \(\cond_1(B(\T_\ell))\)
                of the system matrix $B(\T_\ell)$
                from~\eqref{eq:system_matrix}
                with various selections of \(a\) for the L-shaped domain
		in Subsection~\ref{ex:singular_solution_lshape}.
	}
	\label{fig:condition_plots_various_k}
\end{figure}

\begin{figure}
    \centering
    \subfloat[%
        Stability constant \(\lambda_{1, \ell} > 1-a^{-1/2}\)
    ]{
        \label{fig:stability_variable_a}%
        \begin{tikzpicture}[>=stealth]
    \colorlet{col2}{HUblue}
    \colorlet{col3}{HUgreen}
    \colorlet{col4}{HUred}
    \colorlet{col5}{HUsand}
    \pgfplotsset{%
        eta/.style = {%
            mark = *,%
            mark size = 2pt,%
            every mark/.append style = {solid}%
        },%
        err/.style = {%
            mark = square*,%
            mark size = 2pt,%
            every mark/.append style = {solid}%
        },%
        stab/.style = {%
            mark = square*,%
            mark size = 2pt,%
            every mark/.append style = {solid, rotate=45}%
        },%
        degdefault/.style = {%
            gray,%
            every mark/.append style = {fill = gray!60!white}%
        },%
        deg2/.style = {%
            col2,%
            every mark/.append style = {fill = col2!60!white}%
        },%
        deg3/.style = {%
            col3,%
            every mark/.append style = {fill = col3!60!white}%
        },%
        deg4/.style = {%
            col4,%
            every mark/.append style = {fill = col4!60!white}%
        },%
        deg5/.style = {%
            col5,%
            every mark/.append style = {fill = col5!60!white}%
        },%
        uniform/.style = {%
            dashed,%
            every mark/.append style = {%
                fill = black!20!white
            }%
        },%
        adaptive/.style = {%
            solid%
        }%
    }

    \begin{semilogxaxis}[%
            width            = 0.40\textwidth,%
            xlabel           = {parameter \(a\)},%
            ymajorgrids      = true,%
            font             = \footnotesize,%
            grid style       = {%
                densely dotted,%
                semithick%
            },%
            legend style     = {%
                legend pos  = south east %
            }%
        ]

        \pgfplotstableread[col sep=comma]{
a4lvl,ndof4lvl,lambda4lvl,condition4lvl
1,12033,0.256683330343277,11383661.3240317
2,12033,0.482224487495869,26048278.2088086
4,12033,0.638564875226253,54946207.2686323
8,12033,0.747149204316838,111555037.303415
16,12033,0.82294571351675,221370280.027916
32,12033,0.876149559412311,430779370.563176
64,12033,0.913721098274308,818505194.61071
128,12033,0.940435218646901,1504252763.80911
256,12033,0.959540494447431,2633700510.10662
512,12033,0.97323020719632,4298066815.75867
1024,12033,0.982986109520912,6395072960.64137
2048,12033,0.98975119737771,8554407355.43949
4096,12033,0.994171567424943,10359343630.2582
8192,12033,0.996844185142011,11617188669.077
16384,12033,0.998349124125518,12380017172.3161
32768,12033,0.999154278627042,12804022185.3887
65536,12033,0.999571812714167,13028171646.5251
131072,12033,0.999784518734896,13143500966.7354
        }\tableUniformTwo;

        \pgfplotstableread[col sep=comma]{
a4lvl,ndof4lvl,lambda4lvl,condition4lvl
1,27265,0.380601002477172,232174644.736644
2,27265,0.559643231483478,537958813.911282
4,27265,0.687931944019052,1151186713.56743
8,27265,0.77921068962665,2375381862.79106
16,27265,0.843899102177867,4820180272.9301
32,27265,0.889659299120625,9704728413.15316
64,27265,0.922006903717697,19467245652.8136
128,27265,0.944868718541644,38983481108.3211
256,27265,0.961026541453151,78002732806.1403
512,27265,0.972447126397566,156016161176.211
1024,27265,0.980520077097825,311987701437.09
2048,27265,0.986227113436393,623806876197.929
4096,27265,0.990261865044855,1247189677050.1
8192,27265,0.99311448789155,2493434974303.62
16384,27265,0.995131404548268,4984954217140.46
32768,27265,0.996557482414673,9966259942072.07
65536,27265,0.997565822327266,19927920670472.7
131072,27265,0.998278801202599,39844368970141.1
        }\tableUniformThree;

        \pgfplotstableread[col sep=comma]{
a4lvl,ndof4lvl,lambda4lvl,condition4lvl
1,48641,0.378388408317561,2175658794.38825
2,48641,0.561076959195849,5214988654.08821
4,48641,0.690190714971324,11545163361.7767
8,48641,0.781312815100468,24249660351.2607
16,48641,0.845601883629243,49648227222.9823
32,48641,0.890969301816574,100432358455.843
64,48641,0.922994269461294,201986267528.728
128,48641,0.945605564645889,405081228527.834
256,48641,0.96157084170243,811259366232.225
512,48641,0.972844431084127,1623604363018.54
1024,48641,0.980806912553541,3248292721603.47
2048,48641,0.986432555754729,6497603667000.68
4096,48641,0.990408180359446,12997446790542.5
8192,48641,0.99321873122668,25991277397567.6
16384,48641,0.995204471337998,52004497467073.4
32768,48641,0.996609477442258,104006565596965
65536,48641,0.997602276895685,208134340680346
131072,48641,0.998300162085307,415067305524918
        }\tableUniformFour;

        \pgfplotstableread[col sep=comma]{
a4lvl,ndof4lvl,lambda4lvl,condition4lvl
1,76161,0.409205659045816,16347377905.4842
2,76161,0.581360972267037,39981035703.8014
4,76161,0.703533166234963,87363980290.9273
8,76161,0.790171960112949,182334183163.035
16,76161,0.851552207092521,372264842060.065
32,76161,0.895003249320714,752068954556.853
64,76161,0.925746273385792,1511634224149.21
128,76161,0.947491480464784,3030712185836.22
256,76161,0.96286990236577,6068771943851.71
512,76161,0.973744804286713,12144758949587.6
1024,76161,0.981434730471565,24295266939226.2
2048,76161,0.986872380588311,48606099936704.9
4096,76161,0.990717386182546,97201310676625.2
8192,76161,0.993436211775551,194367969237607
16384,76161,0.995358708340711,388404041870770
32768,76161,0.996718115400272,777593014273833
65536,76161,0.99764655183297,1.54828861792012e+15
131072,76161,0.983904444013322,3.12372052618346e+15
        }\tableUniformFive;

        \addlegendimage{stab,deg2};
        \addlegendentry{\(k = 2\)};
        \addlegendimage{stab,deg3};
        \addlegendentry{\(k = 3\)};
        \addlegendimage{stab,deg4};
        \addlegendentry{\(k = 4\)};
        \addlegendimage{stab,deg5};
        \addlegendentry{\(k = 5\)};

        \addlegendimage{empty legend}
        \addlegendentry{\relax}

        \addlegendimage{thick}
        \addlegendentry{\(1 - 1/\sqrt{a}\)}

        \addplot+ [stab, deg2, adaptive, forget plot]
        table [x=a4lvl, y=lambda4lvl] {\tableUniformTwo};
        \addplot+ [stab, deg3, adaptive, forget plot]
        table [x=a4lvl, y=lambda4lvl] {\tableUniformThree};
        \addplot+ [stab, deg4, adaptive, forget plot]
        table [x=a4lvl, y=lambda4lvl] {\tableUniformFour};
        \addplot+ [stab, deg5, adaptive, forget plot]
        table [x=a4lvl, y=lambda4lvl] {\tableUniformFive};

        \addplot[domain=1:1e5, thick, samples=100] {1 - 1/sqrt(x)};


    \end{semilogxaxis}
\end{tikzpicture}%
    }
    \hfill
    \subfloat[%
        Condition number \(\cond_1(B(\T_\ell))\)
    ]{
        \begin{tikzpicture}[>=stealth]
    \colorlet{col2}{HUblue}
    \colorlet{col3}{HUgreen}
    \colorlet{col4}{HUred}
    \colorlet{col5}{HUsand}
    \pgfplotsset{%
        eta/.style = {%
            mark = *,%
            mark size = 2pt,%
            every mark/.append style = {solid}%
        },%
        err/.style = {%
            mark = square*,%
            mark size = 2pt,%
            every mark/.append style = {solid}%
        },%
        stab/.style = {%
            mark = square*,%
            mark size = 2pt,%
            every mark/.append style = {solid, rotate=45}%
        },%
        cond/.style = {%
            mark = triangle*,%
            mark size = 2pt,%
            every mark/.append style = {solid}%
        },%
        degdefault/.style = {%
            gray,%
            every mark/.append style = {fill = gray!60!white}%
        },%
        deg2/.style = {%
            col2,%
            every mark/.append style = {fill = col2!60!white}%
        },%
        deg3/.style = {%
            col3,%
            every mark/.append style = {fill = col3!60!white}%
        },%
        deg4/.style = {%
            col4,%
            every mark/.append style = {fill = col4!60!white}%
        },%
        deg5/.style = {%
            col5,%
            every mark/.append style = {fill = col5!60!white}%
        },%
        uniform/.style = {%
            dashed,%
            every mark/.append style = {%
                fill = black!20!white
            }%
        },%
        adaptive/.style = {%
            solid%
        }%
    }

    \begin{loglogaxis}[%
            width            = 0.40\textwidth,%
            xlabel           = {parameter \(a\)},%
            ymajorgrids      = true,%
            font             = \footnotesize,%
            grid style       = {%
                densely dotted,%
                semithick%
            },%
            legend style     = {%
                legend pos  = north west %
            }%
        ]

        \pgfplotstableread[col sep=comma]{
a4lvl,ndof4lvl,lambda4lvl,condition4lvl
1,12033,0.256683330343277,11383661.3240317
2,12033,0.482224487495869,26048278.2088086
4,12033,0.638564875226253,54946207.2686323
8,12033,0.747149204316838,111555037.303415
16,12033,0.82294571351675,221370280.027916
32,12033,0.876149559412311,430779370.563176
64,12033,0.913721098274308,818505194.61071
128,12033,0.940435218646901,1504252763.80911
256,12033,0.959540494447431,2633700510.10662
512,12033,0.97323020719632,4298066815.75867
1024,12033,0.982986109520912,6395072960.64137
2048,12033,0.98975119737771,8554407355.43949
4096,12033,0.994171567424943,10359343630.2582
8192,12033,0.996844185142011,11617188669.077
16384,12033,0.998349124125518,12380017172.3161
32768,12033,0.999154278627042,12804022185.3887
65536,12033,0.999571812714167,13028171646.5251
131072,12033,0.999784518734896,13143500966.7354
        }\tableUniformTwo;

        \pgfplotstableread[col sep=comma]{
a4lvl,ndof4lvl,lambda4lvl,condition4lvl
1,27265,0.380601002477172,232174644.736644
2,27265,0.559643231483478,537958813.911282
4,27265,0.687931944019052,1151186713.56743
8,27265,0.77921068962665,2375381862.79106
16,27265,0.843899102177867,4820180272.9301
32,27265,0.889659299120625,9704728413.15316
64,27265,0.922006903717697,19467245652.8136
128,27265,0.944868718541644,38983481108.3211
256,27265,0.961026541453151,78002732806.1403
512,27265,0.972447126397566,156016161176.211
1024,27265,0.980520077097825,311987701437.09
2048,27265,0.986227113436393,623806876197.929
4096,27265,0.990261865044855,1247189677050.1
8192,27265,0.99311448789155,2493434974303.62
16384,27265,0.995131404548268,4984954217140.46
32768,27265,0.996557482414673,9966259942072.07
65536,27265,0.997565822327266,19927920670472.7
131072,27265,0.998278801202599,39844368970141.1
        }\tableUniformThree;

        \pgfplotstableread[col sep=comma]{
a4lvl,ndof4lvl,lambda4lvl,condition4lvl
1,48641,0.378388408317561,2175658794.38825
2,48641,0.561076959195849,5214988654.08821
4,48641,0.690190714971324,11545163361.7767
8,48641,0.781312815100468,24249660351.2607
16,48641,0.845601883629243,49648227222.9823
32,48641,0.890969301816574,100432358455.843
64,48641,0.922994269461294,201986267528.728
128,48641,0.945605564645889,405081228527.834
256,48641,0.96157084170243,811259366232.225
512,48641,0.972844431084127,1623604363018.54
1024,48641,0.980806912553541,3248292721603.47
2048,48641,0.986432555754729,6497603667000.68
4096,48641,0.990408180359446,12997446790542.5
8192,48641,0.99321873122668,25991277397567.6
16384,48641,0.995204471337998,52004497467073.4
32768,48641,0.996609477442258,104006565596965
65536,48641,0.997602276895685,208134340680346
131072,48641,0.998300162085307,415067305524918
        }\tableUniformFour;

        \pgfplotstableread[col sep=comma]{
a4lvl,ndof4lvl,lambda4lvl,condition4lvl
1,76161,0.409205659045816,16347377905.4842
2,76161,0.581360972267037,39981035703.8014
4,76161,0.703533166234963,87363980290.9273
8,76161,0.790171960112949,182334183163.035
16,76161,0.851552207092521,372264842060.065
32,76161,0.895003249320714,752068954556.853
64,76161,0.925746273385792,1511634224149.21
128,76161,0.947491480464784,3030712185836.22
256,76161,0.96286990236577,6068771943851.71
512,76161,0.973744804286713,12144758949587.6
1024,76161,0.981434730471565,24295266939226.2
2048,76161,0.986872380588311,48606099936704.9
4096,76161,0.990717386182546,97201310676625.2
8192,76161,0.993436211775551,194367969237607
16384,76161,0.995358708340711,388404041870770
32768,76161,0.996718115400272,777593014273833
65536,76161,0.99764655183297,1.54828861792012e+15
131072,76161,0.983904444013322,3.12372052618346e+15
        }\tableUniformFive;

        \addlegendimage{cond,deg2};
        \addlegendentry{\(k = 2\)};
        \addlegendimage{cond,deg3};
        \addlegendentry{\(k = 3\)};
        \addlegendimage{cond,deg4};
        \addlegendentry{\(k = 4\)};
        \addlegendimage{cond,deg5};
        \addlegendentry{\(k = 5\)};


        \addplot+ [cond, deg2, adaptive, forget plot]
        table [x=a4lvl, y=condition4lvl] {\tableUniformTwo};
        \addplot+ [cond, deg3, adaptive, forget plot]
        table [x=a4lvl, y=condition4lvl] {\tableUniformThree};
        \addplot+ [cond, deg4, adaptive, forget plot]
        table [x=a4lvl, y=condition4lvl] {\tableUniformFour};
        \addplot+ [cond, deg5, adaptive, forget plot]
        table [x=a4lvl, y=condition4lvl] {\tableUniformFive};

        \drawslopetriangleup[ST1]{0.9}{1e1}{7e7}
        \drawswappedslopetriangleup[ST2]{1}{1e3}{5e14}

    \end{loglogaxis}
\end{tikzpicture}%
        \label{fig:condition_variable_a}%
    }

    \caption{
        Plots of \(a\)-dependence of quantities
        on a uniform mesh with
        6\,144 triangles of the L-shaped domain
        in Subsection~\ref{ex:singular_solution_lshape}.
    }
\end{figure}
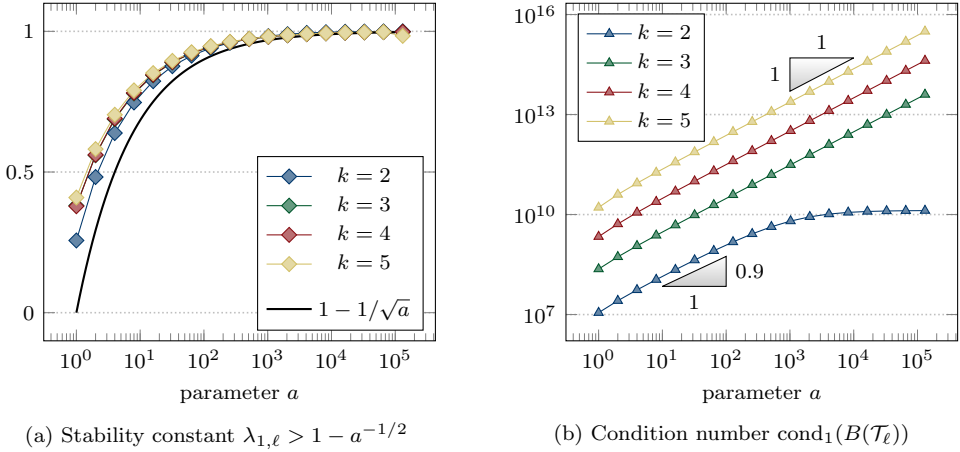

\subsection{Conclusions}

The numerical experiments validate
the theoretically established stability result
with the proposed penalty parameter from
\eqref{eq:definition_sigma}
for polynomial degrees $k=2,3,4,5$.
The Figures~\ref{fig:lshape_convergence}--%
\ref{fig:fourslit_convergence}
exhibit rate-optimal convergence of the
adaptive C\textsuperscript{0} interior penalty method.
%

A detailed investigation of the influence of the parameter
\(a > 1\) recommends the choice of \(a\) as small as possible
in order to avoid large condition numbers of the linear
system of equations.
We suggest \(a = 2\)
with \(\kappa = 1 - 1/\sqrt{2} \approx 0.293\).

\subsection*{Acknowledgements}
It is our pleasure to thank Zhaonan Dong (of Inria, Paris)
for fruitful discussions on the discrete trace inequality.
This paper has been supported by SPARC project (ID 235)
\emph{the mathematics and computation of plates}.



\bibliographystyle{siamplain}
\bibliography{references}

\newpage
\appendix

\section{Remarks on the numerical realization}
\label{sec:remarks_realization}
%
The following three subsections illustrate
the numerical realization of the C0IP method
with the proposed local parameter selection
in established software packages
such as FEniCS (using the unified form language),
deal.II, and NGSolve.

\subsection{Realization using the unified form language (UFL)}
\label{sec:UFL}
The unified form language \cite{MR3181899}
provides a standardized way of implementing
variational formulations for PDEs.
It supports discontinuous Galerkin discretizations
such as the C0IP method \cite{MR2466139}.

The first step defines the shape of the element domain
and several geometric quantities required for the bilinear
form \(A_h\).
\begin{lstlisting}[language=python,numbers=none]
cell = triangle
n = FacetNormal(cell)
h = FacetArea(cell)
vol = CellVolume(cell)
\end{lstlisting}
Given a polynomial degree
\textup{k} \(\in \N\) with \textup{k} \(\geq 2\),
these variables allow to define the penalty parameter \(\sigma_{\IP,E}\)
according to the formula~\eqref{eq:definition_sigma}.
\begin{lstlisting}[language=python,numbers=none]
a = 4.0
sigma = 3.0 * a * k * (k - 1) / 8.0 * h("+")**2 * avg(1 / vol)
sigma_boundary = 3.0 * a * k * (k - 1) * h**2 / vol
\end{lstlisting}
The following lines specify the Lagrange finite element
and the trial and test functions.
\begin{lstlisting}[language=python,numbers=none]
element = FiniteElement("Lagrange", cell, k)
u = TrialFunction(element)
v = TestFunction(element)
\end{lstlisting}
Abbreviate the Hessian matrix of
finite element functions via
\begin{lstlisting}[language=python,numbers=none]
def hess(v):
    return grad(grad(v))
\end{lstlisting}
The realization of the bilinear form \(A_h\)
from~\eqref{eq:definition_Ah} reads
\begin{lstlisting}[language=python,numbers=none]
A = inner(hess(u), hess(v)) * dx \
    - inner(avg(dot(hess(u) * n, n)), jump(grad(v), n)) * dS \
    - inner(dot(grad(u), n), dot(hess(v) * n, n)) * ds \
    - inner(jump(grad(u), n), avg(dot(hess(v) * n, n))) * dS \
    - inner(dot(hess(u) * n, n), dot(grad(v), n)) * ds \
    + sigma / h("+") * inner(jump(grad(u), n), jump(grad(v), n)) * dS \
    + sigma_boundary / h * inner(dot(grad(u), n), dot(grad(v), n)) * ds
\end{lstlisting}
Finally, the right-hand side may be defined as follows.
\begin{lstlisting}[language=python,numbers=none]
f = Coefficient(element)
F = f * v * dx
\end{lstlisting}

Many software packages employ the UFL
such as DUNE \cite{MR4189803},
FEniCS \cite{fenics1.5},
and Firedrake \cite{MR3615280}.
With slight adaptions of the above UFL code,
the C0IP method with automatic penalty selection
can be realized within these three software frameworks
for arbitrary polynomial degree.

\subsection{Realization in deal.II}
The C++ software package
deal.II \cite{dealII94} (version 9.4)
provides a finite element implementation
for rectangular grids.
An example implementation of the C0IP method
in the file \texttt{examples/step-47.cc}
employs the following heuristic choice
of the penalty parameter
(adapted to the notation in this paper)
for an edge \(E \in \E\)
with \(d_\pm \coloneqq 2\vert T_\pm \vert / h_E\)
\[
    \frac{\widetilde\sigma_{\IP, E}}{h_E}
    =
    \begin{cases}
        \displaystyle
        k (k + 1)
        /
        \min\{d_+, d_-\}
        & \text{if }
        E = \partial T_+ \cap \partial T_-
        \in \E(\Omega),
        \\
        \displaystyle
        k (k + 1) / d_+
        & \text{if }
        E \in \E(T_+) \cap \E(\partial\Omega).
    \end{cases}
\]
This leads to a slight over-penalization
compared to the parameter suggested
in~\eqref{eq:sigma_rectangles}
\[
    \frac{\sigma_{\IP, E}}{h_E}
    =
    \begin{cases}
        \displaystyle
        a (k - 1)^2
        \bigg(
            \frac{1}{d_+}
            +
            \frac{1}{d_-}
        \bigg)
        & \text{if }
        E = \partial T_+ \cap \partial T_-
        \in \E(\Omega),
        \\
        \displaystyle
        \frac{4 a (k - 1)^2}{d_+}
        & \text{if }
        E \in \E(T_+) \cap \E(\partial\Omega).
    \end{cases}
\]
The application of the automatic parameter selection
with guaranteed stability solely requires
the replacement of the lines~512--518
in the file \texttt{step-47.cc} by
\begin{lstlisting}[numbers=none,language=c++,firstnumber=512]
const double a = 4.0;
const double sigma_over_h =
  2 * a * pow(k - 1, 2.0) * (
      1 / cell->extent_in_direction(
          GeometryInfo<dim>::unit_normal_direction[f]) +
      1 / ncell->extent_in_direction(
          GeometryInfo<dim>::unit_normal_direction[nf]));
\end{lstlisting}
for the parameter on the interior edges
and the lines~622--625 by
\begin{lstlisting}[numbers=none,language=c++,firstnumber=622]
const double a = 2.0;
const double sigma_over_h =
   2 * a * pow(k - 1, 2) /
   cell->extent_in_direction(
   GeometryInfo<dim>::unit_normal_direction[face_no]);
\end{lstlisting}
for the parameter on the boundary edges.

\subsection{Realization in Netgen/NGSolve}
The C++ software package NGSolve \cite{Schoeberl2014}
with an easy-to-use python interface
provides an implemententation of a hybridized C0IP method.
Straight-forward modifications of the existing code
enable the C0IP method presented in this paper.
%
Given an object \texttt{mesh} of the \texttt{Mesh} class
representing a regular triangulation into simplices,
the Lagrange finite element of order \(k \in \N\)
with \(2 \leq k\) can be defined as follows.
\begin{lstlisting}[numbers=none,language=python]
k = 2
fes = H1(mesh, order=k, dirichlet="left|right|bottom|top", dgjumps=True)
\end{lstlisting}
This allows to define variables for trial and test
functions.
\begin{lstlisting}[numbers=none,language=python]
u = fes.TrialFunction()
v = fes.TestFunction()
\end{lstlisting}
For an edge \(E \in \E\) with adjacent triangles
\(T_\pm \in \T\),
recall the notation
\(d_\pm \coloneqq 2 \vert T_\pm \vert / h_E\)
for the altitude of \(T_\pm\).
This quantity and the normal vector \(\nu_E\)
can be defined by special coefficient functions
in NGsolve.
\begin{lstlisting}[numbers=none,language=python]
d = specialcf.mesh_size
n = specialcf.normal(2)
\end{lstlisting}
Thus, the automatic choice of the penalty parameter
\(\sigma_{\IP, E}\) reads, for interior and boundary edges,
\begin{lstlisting}[numbers=none,language=python,firstnumber=7]
a = 4
sigma_over_h = 3 * a * k * (k - 1) / 4 * (1/d + 1/ d.Other())
sigma_over_h_bdr = 3 * a * k * (k  - 1) / 2 * 1/d
\end{lstlisting}
Abbreviating the Hessian of a function \texttt{v}
\begin{lstlisting}[numbers=none,language=python,firstnumber = 9]
def hesse(v):
    return v.Operator("hesse")
\end{lstlisting}
allows for the following variables for the average of
the binormal part of the Hessian in
the jump term \(\mathcal{J}\)
from \eqref{eq:stiffness_and_jump}
\begin{lstlisting}[numbers=none,language=python]
mean_d2udn2 = 0.5*InnerProduct(n, (hesse(u)+hesse(u.Other()))*n)
mean_d2vdn2 = 0.5*InnerProduct(n, (hesse(u)+hesse(u.Other()))*n)
\end{lstlisting}
This and the definition of the normal jump
\begin{lstlisting}[numbers=none,language=python]
def jumpdn(v):
    return n*(grad(v)-grad(v.Other()))
\end{lstlisting}
lead to the bilinear form \(A_h\) from
\eqref{eq:definition_Ah} via
\begin{lstlisting}[numbers=none,language=python]
A = BilinearForm(fes)
A += SymbolicBFI(InnerProduct (hesse(u), hesse(v)) )
A += SymbolicBFI(-mean_d2udn2 * jumpdn(v), skeleton=True)
A += SymbolicBFI(-mean_d2vdn2 * jumpdn(u), skeleton=True)
A += SymbolicBFI(sigma_over_h * jumpdn(u) * jumpdn(v),
                 VOL, skeleton=True)
A += SymbolicBFI(sigma_over_h_bdr * jumpdn(u) * jumpdn(v),
                 BND, skeleton=True)
\end{lstlisting}

\newpage

\section{Direct realization in MATLAB}
\label{app:implementation}
This subsection discusses the explicit implementation
of the lowest-order C0IP method.
The resulting software
is published online in \cite{C0IPMatlab}
under GNU general public license
and has been implemented and tested
with MATLAB version 9.9.0.1718557 (R2020b) Update 6.
The self-contained program
comprises less than 230 lines of code and
does not rely on any additional library.
Although,
the generalization to higher polynomial degrees is possible,
its implementation is much more involved.
For the sake of the general accessibility,
the presentation restricts to the lowest-order case.

\subsection{Data structures}

The implemented function solely requires
two key data structures that represent the underlying
conforming triangulation \(\T\).
The rows of the array
\texttt{n4e} \(\in \R^{\vert \V \vert \times 2}\)
contain the two coordinates of each vertex in \(\V\).
The order of the vertices in the array determines
their enumeration.
Three vertices form a triangle and their numbers
are stored in the rows of the array
\texttt{n4e} \(\in \N^{\vert \T \vert \times 3}\)
in counter-clockwise order.
If the user's data structure does not meet this convention,
the included auxiliary function \texttt{fixLocalEnumeration}
allows to reorder the indices of the vertices
in \texttt{n4e} accordingly.

All further geometric information
are computed and documented in the function
\texttt{C0IP\_geometry}.
The function \texttt{C0IP\_indices}
provides local and global enumerations
of degrees of freedom.
This minimal set of input data structures
enables the application of the C0IP realization
to triangulations from arbitrary mesh generators
such as MATLAB's \texttt{delaunay} function.

\subsection{Notation and local indices}
\label{sec:notation-indices}
Before the remaining part of this section
addresses the realization of the contributions
to the bilinear form \(A_h\)
from~\eqref{eq:definition_Ah},
the notation of the Lagrange finite element functions
and the associated local indices needs to be introduced.
Given a triangle \(T \in \T\)
with the vertices \(\V(T) = \{P_1, P_2, P_3\}\)
in counter-clockwise enumeration,
the enumeration of the midpoints \(M_1, M_2, M_3\)
of the edges \(\E(T) = \{E_1, E_2, E_3\}\)
follows the convention that
the edge \(E_j\) is opposite to the node \(P_j\)
for \(j = 1, 2, 3\)
as depicted in Figure~\ref{subfig:triangle}.
\begin{figure}
    \centering
    \subfloat[%
        Enumeration of nodes
    ]{%
        \makebox[0.40\textwidth]{%
            \label{subfig:triangle}
            \begin{tikzpicture}[%
                    font=\small,%
                    node style/.style={%
                    circle,%
                    draw,%
                    fill=white,%
                    minimum width=.8cm,%
                    inner sep=0pt%
                }
                ]
                \node[%
                    draw,%
                    regular polygon,%
                    regular polygon sides=3,%
                    minimum size=4cm%
                ]
                (T) at (0,0) {\(T\)};

                \draw (T.corner 1) node[node style] {\(P_1\)};
                \draw (T.corner 2) node[node style] {\(P_2\)};
                \draw (T.corner 3) node[node style] {\(P_3\)};
                \draw (T.side 1) node[node style] {\(M_3\)};
                \draw (T.side 2) node[node style] {\(M_1\)};
                \draw (T.side 3) node[node style] {\(M_2\)};
            \end{tikzpicture}
        }
    }
    \hfil
    \subfloat[%
        Enumeration of degrees of freedom
    ]{%
        \raisebox{1.3mm}{
            \makebox[0.40\textwidth]{%
                \label{subfig:triangle_dofs}
                \begin{tikzpicture}[%
                        font=\small,%
                        node style/.style={%
                        circle,%
                        draw,%
                        fill=white,%
                        minimum width=.5cm,%
                        inner sep=0pt%
                    }
                    ]
                    \node[%
                        draw,%
                        regular polygon,%
                        regular polygon sides=3,%
                        minimum size=4cm%
                    ]
                    (T) at (0,0) {\(T\)};

                    \draw (T.corner 1) node[node style] {\(1\)};
                    \draw (T.corner 2) node[node style] {\(2\)};
                    \draw (T.corner 3) node[node style] {\(3\)};
                    \draw (T.side 1) node[node style] {\(6\)};
                    \draw (T.side 2) node[node style] {\(4\)};
                    \draw (T.side 3) node[node style] {\(5\)};
                \end{tikzpicture}
            }
        }
    }
    \caption{%
        Local enumeration of nodes associated
        to the piecewise quadratic basis functions in the
        the triangle \(T\).
    }
\end{figure}
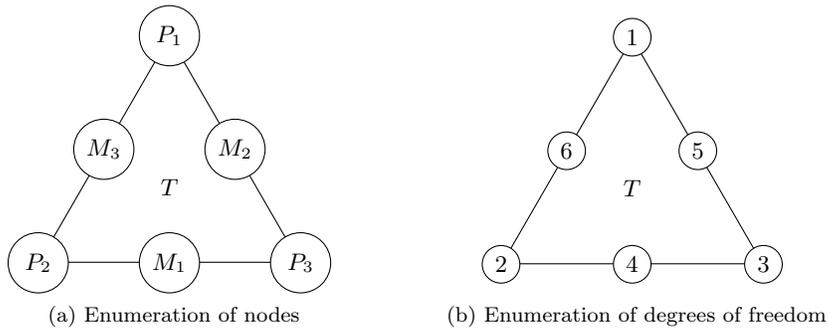
Let \(\lambda_j \in P_1(T)\) denote
the barycentric coordinate
associated to the vertex \(P_j\)
of the triangle \(T\) for \(j = 1, 2, 3\).
The shape functions
\(\varphi_1, \dots, \varphi_6\) of \(P_2(T)\)
can be represented using products
of the barycentric coordinates.
Throughout this section,
let \(\{j, k, \ell\} = \{1, 2, 3\}\)
denote any pairwise distinct indices.
The quadratic shape function
\(\varphi_j \coloneqq \lambda_j (2 \lambda_j - 1)\)
is associated to the vertex \(P_j\)
and \(\varphi_{3 + j} \coloneqq 4 \lambda_k \lambda_\ell\)
to the midpoint \(M_j\) of \(E_j\).
They satisfy the duality,
for \(j, n = 1, 2, 3\),
\begin{equation}
    \label{eq:dual_basis}
    \varphi_j(P_n)
    =
    \delta_{jn}
    =
    \varphi_{3 + j}(M_n)
    \quad
    \text{and}
    \quad
    \varphi_j(M_n)
    =
    0
    =
    \varphi_{3 + j}(P_n).
\end{equation}
The enumeration of \(\varphi_1, \dots, \varphi_6\) is illustrated in
Figure~\ref{subfig:triangle_dofs}.

Let \(E \in \E\) denote the edge
of global number \texttt{s}
\(\in \{1, \dots, \vert \E \vert \}\).
If \(E \in \E(\Omega)\) is an interior edge,
two adjacent triangles \(T_+, T_- \in \T\)
share the edge  \(\partial T_+ \cap \partial T_- = E\).
The local enumeration of the interpolation nodes
\(P^\pm_1, P^\pm_2, P^\pm_3\), \(M^\pm_1, M^\pm_2, M^\pm_3\)
in \(T_\pm\) from Figure~\ref{subfig:triangle}
determines two indices \(q, r \in \{1, 2, 3\}\)
fixed for each edge \(E\) such that
\(\Mid(E) = M^+_q = M^-_r\).
The remaining nodes are numbered accordingly
in counter-clockwise order
as displayed in Figure~\ref{subfig:edge_patch}
in a cyclic notation with index $0$ resp.\ $1$ identified
with $3$ resp.\ $4$.
\begin{figure}
    \centering
    \subfloat[%
        Enumeration in the adjacent triangles
    ]{%
        \makebox[0.40\textwidth]{%
            \label{subfig:edge_patch}
            \begin{tikzpicture}[%
                line cap=round,%
                line join=round,%
                font=\footnotesize,%
                node style/.style={%
                    circle,%
                    draw,%
                    fill=white,%
                    minimum width=.8cm,%
                    inner sep=0pt%
                },
                rotate=-120,%
            ]
                \node[%
                    draw,%
                    rotate=-120,%
                    regular polygon,%
                    regular polygon sides=3,%
                    minimum size=4cm,%
                ]
                (Tp) at (0, 0) {};
                \draw (Tp) node {\(T_+\)};

                \draw (Tp.corner 1) node[node style] {\(P^+_{q-1}\)};
                \draw (Tp.corner 2) node[node style] {\(P^+_q\)};
                \draw (Tp.corner 3) node[node style] {\(P^+_{q+1}\)};
                \draw (Tp.side 1) node[node style] {\(M^+_{q+1}\)};
                \draw (Tp.side 2) node[node style] {\(M^+_{q-1}\)};
                \draw (Tp.side 3) node[node style] {\(M^+_q\)};

                \node[%
                    draw,%
                    rotate=60,%
                    regular polygon,%
                    regular polygon sides=3,%
                    minimum size=4cm,%
                ] (Tm) at (2.8, 1.6) {};
                \draw (Tm) node {\(T_-\)};

                \draw (Tm.corner 1) node[node style] {\(P^-_{r-1}\)};
                \draw (Tm.corner 2) node[node style] {\(P^-_r\)};
                \draw (Tm.corner 3) node[node style] {\(P^-_{r+1}\)};
                \draw (Tm.side 1) node[node style] {\(M^-_{r+1}\)};
                \draw (Tm.side 2) node[node style] {\(M^-_{r-1}\)};
                \draw (Tm.side 3) node[node style] {\(M^-_r\)};

                \draw[very thick] ($(Tp.corner 3)!0.5!(Tm.corner 1)$)
                node[left] {\(E\)}
                -- ($(Tp.corner 1)!0.5!(Tm.corner 3)$);
            \end{tikzpicture}
        }
    }
    \hfil
    \subfloat[%
        Enumeration in the edge patch
    ]{%
        \label{subfig:edge_patch_dofs}
        \raisebox{0.66cm}{
            \begin{tikzpicture}[%
                line cap=round,%
                line join=round,%
                font=\small,%
                node style/.style={%
                    circle,%
                    draw,%
                    fill=white,%
                    minimum width=.7cm,%
                    inner sep=0pt%
                }
                ]
                \node[%
                draw,%
                rotate=-120,%
                regular polygon,%
                regular polygon sides=3,%
                minimum size=4cm%
                ] (T) at (0, 0) {};
                \draw (T) node {\(T_+\)};
                \draw ($(T)-(T.corner 2)$) node {\(T_-\)};

                \path ($2*(T.side 3)-(T.corner 2)$) node[coordinate] (N) {\(8\)};

                \draw (T.corner 3) -- node[node style] {\(Q_7\)} (N)
                node[node style] {\(Q_8\)}
                -- node[node style] {\(Q_9\)}
                (T.corner 1);

                \draw (T.corner 1) node[node style] {\(Q_3\)};
                \draw (T.corner 2) node[node style] {\(Q_5\)};
                \draw (T.corner 3) node[node style] {\(Q_1\)};
                \draw (T.side 1) node[node style] {\(Q_4\)};
                \draw (T.side 2) node[node style] {\(Q_6\)};
                \draw (T.side 3) node[node style] {\(Q_2\)};
            \end{tikzpicture}
        }
    }

    \caption{%
        Local enumeration of nodes
        associated to the piecewise quadratic basis functions
        in the adjacent triangles \(T_+\) and \(T_-\)
        and in the whole patch \(\omega_E\) of the edge \(E\).
        The local enumeration in each triangle
        from Figure~\ref{subfig:triangle}
        induces the choice two indices \(q, r \in \{1,  2, 3\}\)
        such that \(\Mid(E) = M^+_q = M^-_r\).
    }
    \label{fig:edge_patch}
\end{figure}
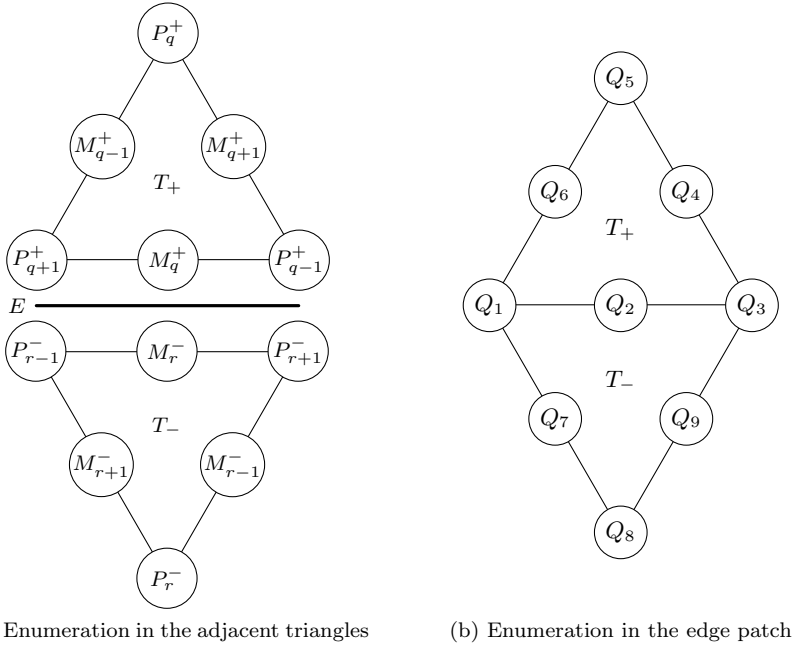
Abbreviate the nodes belonging to
the edge patch \(\omega_E\) for \(E \in \E(\Omega)\) by
\begin{equation}
    \label{eq:Simpson_nodes}
    \begin{split}
        \big[
            Q_1, Q_2, Q_3
        \big]
        &\coloneqq
        \big[
            P_{q+1}^+,
            M_q^+,
            P_{q-1}^+
        \big]
        =
        \big[
            P_{r-1}^-,
            M_r^-,
            P_{r+1}^-
        \big],\\
        \big[
            Q_4, \dots, Q_9
        \big]
        &\coloneqq
        \big[
            M^+_{q+1},
            P^+_q,
            M^+_{q-1},
            M^-_{r+1},
            P^-_r,
            M^-_{r-1}
        \big].
    \end{split}
\end{equation}
This indexing information is stored in an array
\texttt{Ind} \(\in \N^{6 \times 2}\)
containing the corresponding permutations of
\(\{1, \dots, 6\}\) as a column vector
for \(T_+\) and \(T_-\)
\begin{equation}
    \label{eq:local4s}
    \texttt{Ind}
    \coloneqq
    \begin{pmatrix}
        0 & 0 \\
        3 & 3 \\
        0 & 0 \\
        3 & 3 \\
        0 & 0 \\
        3 & 3
    \end{pmatrix}
    +
    \Psi
    \begin{pmatrix}
        q + 1 & r + 1 \\
        q     & r     \\
        q + 2 & r + 2 \\
        q + 1 & r + 1 \\
        q     & r     \\
        q + 2 & r + 2
    \end{pmatrix}
\end{equation}
where the function \(\Psi: \N_0 \to \N\)
with \(\Psi(m) \coloneqq 1 + ((m - 1) \mod 3)\)
applies component-wise to the matrix.
The nine global basis functions \(\phi_1, \dots, \phi_9 \in S^2(\T)\)
of the edge patch \(\omega_E\)
are associated to the nodes \(Q_1, \dots, Q_9\)
as in Figure~\ref{subfig:edge_patch_dofs}.
This means that
\(\phi_1, \phi_2, \phi_3\) belong to the edge \(E\),
\(\phi_4, \phi_5, \phi_6\) exclusively to \(T_+\),
and \(\phi_7, \phi_8, \phi_9\) exclusively to \(T_-\).
For \(\alpha = 1, \dots, 9\),
abbreviate the restrictions
\(\phi^\pm_\alpha \coloneqq \phi_\alpha\vert_{T_\pm} \in P_2(T_\pm)\) of $\phi_\alpha$ to $T_\pm$.

In the case of a boundary edge \(E \in \E(\partial\Omega)\),
the nodes \(Q_7, Q_8, Q_9\) corresponding to \(T_-\) are omitted
and the basis functions \(\phi_\alpha \coloneqq 0\)
vanish for \(\alpha = 7, 8, 9\).
\bigskip

%

\subsection{Discrete functions and their derivatives}
\label{sec:discrete_functions_derivatives}

This subsection presents the evaluation of
derivatives of the barycentric coordinates
and of the quadratic shape functions.
Let \(T \in \T\) denote a triangle
of global number \texttt{e}
\(\in \{1, \dots, \vert \T \vert\}\), i.e,
the global indices of its three vertices
\(\V(T) = \{P_1,  P_2, P_3\}\)
belong to the \texttt{e}-th row
of \texttt{c4n}.
Since
\(
    x
    =
    \lambda_1(x) P_1 +
    \lambda_2(x) P_2 +
    \lambda_3(x) P_3
\)
and \(1 = \lambda_1(x) + \lambda_2(x) + \lambda_3(x)\)
for all \(x \in T\),
the gradients \(\nabla\lambda_1,  \nabla \lambda_2, \nabla \lambda_3\)
are determined by the linear system of equations
\begin{equation}
    \label{eq:gradient_barycentric_coordinates}
    \begin{bmatrix}
        1 & 1 & 1\\
        P_1 & P_2 & P_3
    \end{bmatrix}
    \begin{bmatrix}
        \nabla \lambda_1^\top \\
        \nabla \lambda_2^\top \\
        \nabla \lambda_3^\top
    \end{bmatrix}
    =
    \begin{bmatrix}
        0 & 0 \\
        1 & 0 \\
        0 & 1
    \end{bmatrix}
    .
\end{equation}

\begin{lemma}
    \label{lem:gradients_barycentric_coordinates}
    Let the three-dimensional array \textup{\texttt{c4e}}
    \(\in \R^{2 \times 3 \times \vert \T \vert}\)
    contain the matrix
    \([P_1\: P_2\: P_3] \in \R^{2 \times 3}\)
    for each triangle \(T \in \T\).
    The following line of MATLAB computes
    the gradients \(\nabla\lambda_1,  \nabla \lambda_2, \nabla \lambda_3\),
    according to~\eqref{eq:gradient_barycentric_coordinates}.
\begin{lstlisting}[numbers=none]
GradP1 = [ones(1,3); c4e(:,:,e)] \ [zeros(1,2); eye(2)];
\end{lstlisting}
\end{lemma}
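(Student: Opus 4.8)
The plan is to verify two things in turn: first, that the linear system~\eqref{eq:gradient_barycentric_coordinates} genuinely determines the three gradients $\nabla\lambda_1,\nabla\lambda_2,\nabla\lambda_3$; and second, that the single line of MATLAB assembles precisely this system and returns its solution with the gradients as its rows. Since the text preceding the lemma already records the two defining affine identities of the barycentric coordinates, namely $x = \lambda_1(x)P_1+\lambda_2(x)P_2+\lambda_3(x)P_3$ and $\lambda_1+\lambda_2+\lambda_3 = 1$, the derivation of~\eqref{eq:gradient_barycentric_coordinates} reduces to differentiating them.

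First I would differentiate the vector identity $x = \sum_{j} \lambda_j(x)P_j$ with respect to $x \in \R^2$. Abbreviating $P \coloneqq [P_1\ P_2\ P_3] \in \R^{2\times 3}$ and letting $G \in \R^{3\times 2}$ denote the matrix whose rows are $\nabla\lambda_1^\top,\nabla\lambda_2^\top,\nabla\lambda_3^\top$, the chain rule gives the componentwise relation $\sum_j (P_j)_i\,\partial_m\lambda_j = \delta_{im}$, that is, $P\,G = I_2$. Differentiating the partition-of-unity identity $\lambda_1+\lambda_2+\lambda_3 = 1$ yields $\nabla\lambda_1+\nabla\lambda_2+\nabla\lambda_3 = 0$, equivalently $[1\ 1\ 1]\,G = [0\ 0]$. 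Stacking the row $[1\ 1\ 1]$ on top of $P$ and combining the two right-hand sides reproduces exactly~\eqref{eq:gradient_barycentric_coordinates}.

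Then I would establish unique solvability. The coefficient matrix is the $3\times 3$ matrix whose columns are $(1,P_j)$ for $j=1,2,3$, and its determinant equals twice the signed area of $T$, which is nonzero because $T$ is a nondegenerate simplex of the regular triangulation $\T$. Hence~\eqref{eq:gradient_barycentric_coordinates} has a unique solution $G$ whose $j$-th row is $\nabla\lambda_j^\top$. It remains to read off the MATLAB line. The block \texttt{[ones(1,3); c4e(:,:,e)]} stacks the row $[1\ 1\ 1]$ above the stored matrix $[P_1\ P_2\ P_3]$, reproducing the coefficient matrix; the block \texttt{[zeros(1,2); eye(2)]} reproduces the right-hand side of~\eqref{eq:gradient_barycentric_coordinates}; and the backslash operator returns the solution of the associated system, so that \texttt{GradP1(j,:)} equals $\nabla\lambda_j^\top$.

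The argument is essentially a bookkeeping verification rather than a deep result, so the only real obstacle is keeping MATLAB's row/column layout consistent with the transposed-gradient convention: both the right-hand side and the returned solution are $3\times 2$, and one must check that the backslash delivers the gradients as the \emph{rows}, not the columns, of \texttt{GradP1}, matching the stacking in~\eqref{eq:gradient_barycentric_coordinates}. Shape regularity enters only through the invertibility of the coefficient matrix, and no further estimates are required.
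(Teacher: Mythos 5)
Your proposal is correct and follows essentially the same route as the paper: the system~\eqref{eq:gradient_barycentric_coordinates} is derived from the two affine identities in the surrounding text, and the proof itself amounts to observing that the backslash operator (\texttt{mldivide}) solves exactly this system, with the rows of \texttt{GradP1} carrying the gradients. Your additional check of unique solvability via the determinant being twice the signed area is a detail the paper leaves implicit in the nondegeneracy of the triangulation, but it changes nothing in the argument.
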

\begin{proof}
    The backslash operator in MATLAB calls
    the function \texttt{mldivide} and solves the
    linear system~\eqref{eq:gradient_barycentric_coordinates}
    up to machine precision.
    The three rows of the solution array
    \texttt{GradP1} \(\in \R^{3 \times 2}\)
    contain the two components of the constant gradients.
\end{proof}

From Section~\ref{sec:notation-indices},
recall the definition of the basis functions
\(\varphi_1, \dots, \varphi_6\)
associated with a triangle \(T \in \T\).
Their representation via the barycentric coordinates
and the product rule lead to the following formulas
for the derivatives of first and second order.
For pairwise distinct indices
\(\{j, k, \ell\} = \{1, 2, 3\}\),
\begin{align}
    \label{eq:gradient}
    \nabla \varphi_j
    &=
    (4 \lambda_j - 1) \nabla \lambda_j,
    &
    \nabla \varphi_{3 + j}
    &=
    4 (\lambda_k \nabla \lambda_\ell
    + \lambda_\ell \nabla \lambda_k),\\
    \label{eq:Hessian}
    \D^2 \varphi_j
    &=
    4 \nabla \lambda_j \otimes \nabla \lambda_j,
    &
    \D^2 \varphi_{3 + j}
    &=
    4\,(
        \nabla \lambda_k \otimes \nabla \lambda_\ell
        +
        \nabla \lambda_\ell \otimes \nabla \lambda_k
    ).
\end{align}
Hence, the evaluation of the gradients
\(\nabla \phi_1, \dots, \nabla \phi_6\)
in the three nodes \(P_k\), \(M_j\), and \(P_\ell\) reads,
for pairwise distinct indices
\(\{j, k, \ell\} = \{1, 2, 3\}\),
\begin{equation}
    \label{eq:gradient_evaluation}
    \begin{bmatrix}
        \nabla \varphi_j(P_j) &
        \nabla \varphi_j(P_k) \\
        \nabla \varphi_j(M_j) &
        \nabla \varphi_j(M_k) \\
        \nabla \varphi_{3 + j}(P_j) &
        \nabla \varphi_{3 + j}(P_k) \\
        \nabla \varphi_{3 + j}(M_j) &
        \nabla \varphi_{3 + j}(M_k)
    \end{bmatrix}
    =
    \begin{bmatrix}
        3 \nabla \lambda_j &
        - \nabla \lambda_j \\
        - \nabla \lambda_j &
        \nabla \lambda_j \\
        0 &
        4 \nabla \lambda_\ell \\
        2 (\nabla \lambda_k + \nabla \lambda_\ell) &
        2 \nabla \lambda_k
    \end{bmatrix}.
\end{equation}

\begin{lemma}
    \label{lem:gradient_evaluation}
    The following lines realize the evaluations
    from~\eqref{eq:gradient_evaluation}.
\begin{lstlisting}[numbers=none]
Gradient = cat(3, [3 -1 -1 -1 1 1] .* GradP1(1,:)', ...
                  [-1 3 -1 1 -1 1] .* GradP1(2,:)', ...
                  [-1 -1 3 1 1 -1] .* GradP1(3,:)', ...
                  [0 0 4 2 2 0] .* GradP1(2,:)' ...
                  + [0 4 0 2 0 2] .* GradP1(3,:)', ...
                  [0 0 4 2 2 0] .* GradP1(1,:)' ...
                  + [4 0 0 0 2 2] .* GradP1(3,:)', ...
                  [0 4 0 2 0 2] .* GradP1(1,:)' ...
                  + [4 0 0 0 2 2] .* GradP1(2,:)');
\end{lstlisting}
\end{lemma}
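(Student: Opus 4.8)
The plan is to read off the precise semantics of the MATLAB fragment and match each of the six slabs of the resulting array against the gradient evaluations in~\eqref{eq:gradient_evaluation}. First I would invoke Lemma~\ref{lem:gradients_barycentric_coordinates}: the $m$-th row of \texttt{GradP1} stores the constant gradient $\nabla\lambda_m\in\R^2$, so \texttt{GradP1(m,:)'} is the column vector $\nabla\lambda_m$. MATLAB's implicit expansion in \texttt{[c\_1 \dots c\_6] .* GradP1(m,:)'} multiplies the $1\times 6$ coefficient row with the $2\times 1$ column and returns the $2\times 6$ matrix whose $n$-th column is $c_n\,\nabla\lambda_m$; the enclosing \texttt{cat(3,\dots)} then stacks the six such $2\times 6$ blocks into a $2\times 6\times 6$ array \texttt{Gradient}. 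The statement to verify is that for $\alpha=1,\dots,6$ the $\alpha$-th slab lists the two components of $\nabla\varphi_\alpha$ at the six interpolation nodes, taken in the order $P_1,P_2,P_3,M_1,M_2,M_3$; that is, \texttt{Gradient(:,n,$\alpha$)} equals $\nabla\varphi_\alpha$ evaluated at the $n$-th node.

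Next I would record the barycentric values at the nodes, namely $\lambda_m(P_n)=\delta_{mn}$ and $\lambda_m(M_n)=\tfrac12(1-\delta_{mn})$, the latter because the midpoint $M_n$ of the edge $E_n$ opposite $P_n$ satisfies $\lambda_n(M_n)=0$ and $\lambda_m(M_n)=\tfrac12$ for $m\neq n$. Substituting these into the derivative formulas $\nabla\varphi_j=(4\lambda_j-1)\nabla\lambda_j$ and $\nabla\varphi_{3+j}=4(\lambda_k\nabla\lambda_\ell+\lambda_\ell\nabla\lambda_k)$ from~\eqref{eq:gradient} expresses each shape-function gradient at each node as a fixed multiple (or, for the edge functions, a fixed combination) of the $\nabla\lambda_m$. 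For the vertex function $\varphi_1$ this gives the node coefficients $[3,-1,-1,-1,1,1]$ applied to $\nabla\lambda_1$, matching the first argument of \texttt{cat}; cycling $1\mapsto 2\mapsto 3$ reproduces the coded vectors for $\varphi_2$ and $\varphi_3$. For the edge function $\varphi_4$ only $\nabla\lambda_2$ and $\nabla\lambda_3$ enter, with node coefficients $[0,0,4,2,2,0]$ and $[0,4,0,2,0,2]$ respectively, which is exactly the fourth argument of \texttt{cat}; the same substitution settles $\varphi_5$ and $\varphi_6$.

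Finally I would observe that the just-computed slabs specialize to the compact table~\eqref{eq:gradient_evaluation}: extracting from the slabs for $\varphi_j$ and $\varphi_{3+j}$ the self-node columns $P_j,M_j$ and a foreign-node pair $P_k,M_k$ returns precisely the right-hand side of~\eqref{eq:gradient_evaluation}, using $\nabla\varphi_j(P_j)=3\nabla\lambda_j$, $\nabla\varphi_j(M_j)=-\nabla\lambda_j$, $\nabla\varphi_{3+j}(P_j)=0$, $\nabla\varphi_{3+j}(M_j)=2(\nabla\lambda_k+\nabla\lambda_\ell)$, and the analogous foreign-node values. Invariance under permutations of $\{j,k,\ell\}=\{1,2,3\}$ then guarantees that the three explicit index choices hard-coded for $j=1,2,3$ exhaust the index-symmetric assertion~\eqref{eq:gradient_evaluation}, which completes the verification.

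I expect the only real obstacle to be bookkeeping rather than analysis: correctly translating MATLAB's broadcasting and the \texttt{cat(3,\dots)} layout into the intended (component, node, shape-function) indexing, and checking that the cyclically written code for the fixed indices $1,2,3$ is consistent with the index-symmetric form of~\eqref{eq:gradient_evaluation}. Once the node values of the $\lambda_m$ are inserted, every identity follows directly from~\eqref{eq:gradient}, so no estimate or limiting argument is needed.
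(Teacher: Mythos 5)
Your proposal is correct and takes essentially the same route as the paper's own (far terser) proof: both reduce the claim to expressing each $\nabla\varphi_\alpha$ at the six nodes as fixed linear combinations of the constant gradients $\nabla\lambda_m$ from Lemma~\ref{lem:gradients_barycentric_coordinates} and matching the resulting coefficient vectors against the six $2\times 6$ slabs that \texttt{cat(3,\dots)} with implicit expansion assembles into \texttt{Gradient} $\in \R^{2\times 6\times 6}$. Your explicit tabulation of $\lambda_m(P_n)=\delta_{mn}$ and $\lambda_m(M_n)=\tfrac12(1-\delta_{mn})$, substituted into~\eqref{eq:gradient}, just spells out the bookkeeping that the paper's proof leaves implicit, and all six coefficient vectors check out.
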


\begin{proof}
    Linear combinations of the gradients
    of the barycentric coordinates from \texttt{GradP1}
    provide the formulas in~\eqref{eq:gradient_evaluation}.
    They are rearranged and stored in the
    three-dimensional array \texttt{Gradient}
    \(\in \R^{2 \times 6 \times 6}\)
    containing the two components
    of the six gradients evaluated
    in the six quadrature nodes.
\end{proof}

\begin{lemma}
    \label{lem:Hessian}
    The following lines evaluate the constant
    Hessians from~\eqref{eq:Hessian}.
\begin{lstlisting}[numbers=none]
H = permute(GradP1, [2 3 1 4]) .* permute(GradP1, [3 2 4 1]);
Hessian = 4 * cat(3, H(:,:,1,1), H(:,:,2,2), H(:,:,3,3), ...
                     H(:,:,2,3) + H(:,:,3,2), ...
                     H(:,:,1,3) + H(:,:,3,1), ...
                     H(:,:,1,2) + H(:,:,2,1));
\end{lstlisting}
\end{lemma}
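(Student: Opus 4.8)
The plan is to verify, entry by entry, that the two \texttt{permute} calls followed by the elementwise product \texttt{H = permute(GradP1,[2 3 1 4]) .* permute(GradP1,[3 2 4 1])} assemble a four-dimensional array whose $(i,j)$-slice is exactly the rank-one outer product $\nabla\lambda_i \otimes \nabla\lambda_j$, and then that the subsequent \texttt{cat(3,\dots)} selects precisely the symmetric combinations dictated by~\eqref{eq:Hessian}. First I would recall from Lemma~\ref{lem:gradients_barycentric_coordinates} (and the convention in its proof) that the rows of \texttt{GradP1} $\in \R^{3\times2}$ store the transposed constant gradients, so that $\texttt{GradP1}(i,c) = (\nabla\lambda_i)_c$ for $i \in \{1,2,3\}$ and the component index $c \in \{1,2\}$.

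Next I would track the two permutations, viewing the matrix \texttt{GradP1} as a $3\times2\times1\times1$ array (as MATLAB does when a length-four permutation vector is supplied). The first call \texttt{permute(GradP1,[2 3 1 4])} produces an array $A$ of size $2\times1\times3\times1$ with $A(c,1,i,1) = (\nabla\lambda_i)_c$, placing the vector component on axis~$1$ and the barycentric index on axis~$3$; the second call \texttt{permute(GradP1,[3 2 4 1])} produces $B$ of size $1\times2\times1\times3$ with $B(1,d,1,j) = (\nabla\lambda_j)_d$, placing the component on axis~$2$ and the index on axis~$4$. Since $A$ and $B$ carry their nontrivial extents on complementary axes, MATLAB's implicit expansion in \texttt{A .\! * B} yields the $2\times2\times3\times3$ array $H$ with $H(c,d,i,j) = (\nabla\lambda_i)_c\,(\nabla\lambda_j)_d$, so that each slice satisfies $H(:,:,i,j) = \nabla\lambda_i \otimes \nabla\lambda_j$.

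It then remains to compare the six slices assembled by \texttt{cat(3,\dots)}, after the prefactor $4$, against~\eqref{eq:Hessian}. The three diagonal terms $H(:,:,1,1),H(:,:,2,2),H(:,:,3,3)$ reproduce $\D^2\varphi_1,\D^2\varphi_2,\D^2\varphi_3$ directly. For the edge shape functions I would match the cyclic labeling $\{j,k,\ell\}=\{1,2,3\}$: the symmetric sum $H(:,:,2,3)+H(:,:,3,2)$ corresponds to $j=1$ with $(k,\ell)=(2,3)$ and hence equals $\tfrac14\D^2\varphi_{4}$, while $H(:,:,1,3)+H(:,:,3,1)$ and $H(:,:,1,2)+H(:,:,2,1)$ correspond to $j=2$ and $j=3$ and give $\tfrac14\D^2\varphi_{5}$ and $\tfrac14\D^2\varphi_{6}$, respectively. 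Multiplication by $4$ and stacking along the third axis therefore realizes $\D^2\varphi_1,\dots,\D^2\varphi_6$ in the order of Figure~\ref{subfig:triangle_dofs}. The only genuine subtlety — and the step I expect to require the most care in writing out — is the precise bookkeeping of MATLAB's singleton-dimension broadcasting: namely that \texttt{permute} silently pads \texttt{GradP1} to four dimensions and that implicit expansion along orthogonal singleton axes is exactly what turns a pair of gradient vectors into their tensor product. No analytic difficulty remains once~\eqref{eq:Hessian} and the row convention of \texttt{GradP1} are fixed; the argument is purely a matter of confirming index placement.
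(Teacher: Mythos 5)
Your proof is correct and takes essentially the same approach as the paper's own three-sentence argument: both form the $2\times2\times3\times3$ array $H$ of pairwise tensor products $\nabla\lambda_i\otimes\nabla\lambda_j$ via the two permuted copies of \texttt{GradP1} and then read off the Hessians in~\eqref{eq:Hessian} as the six slices assembled by \texttt{cat} with prefactor $4$. Your version simply spells out the singleton-dimension padding and implicit-expansion bookkeeping that the paper leaves implicit, and this bookkeeping is accurate.
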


\begin{proof}
    All pairwise tensor products of the gradients
    of the barycentric coordinates
    from Lemma~\ref{lem:gradients_barycentric_coordinates}
    result in nine 2-by-2 matrices.
    They are stored in the four-dimensional array
    \texttt{H} \(\in \R^{2 \times 2 \times 3 \times 3}\).
    The linear combinations of these matrices
    provide the Hessian matrices
    \(\D^2 \varphi_1, \dots, \D^2 \varphi_6\)
    in the array \texttt{Hessian}
    \(\in \R^{2 \times 2 \times 6}\).
\end{proof}

\begin{remark}
    \label{rem:edge_oriented_arrays}
    The Hessian matrices
    \texttt{Hessian} \(\in \R^{2 \times 2 \times 6}\)
    from Lemma~\ref{lem:Hessian}
    and the gradient evaluations
    \texttt{Gradient} \(\in \R^{2 \times 6 \times 6}\)
    are computed elementwise and
    stored in the global variables
    \texttt{Hess4e}
    \(\in \R^{6 \times 2 \times 2 \times \vert \T \vert}\)
    and
    \texttt{Grad4e}
    \(\in \R^{6 \times 2 \times 6 \times \vert \T \vert}\)
    by
\begin{lstlisting}[numbers=none]
Hess4e(:,:,:,e) = permute(Hessian, [3 1 2]);
Grad4e(:,:,:,e) = permute(Gradient, [3 1 2]);
\end{lstlisting}
    The evaluations of the derivatives are supposed to be used
    in a parallelized computation over all edges
    for the local contributions in the
    Section~\ref{sec:jumps} below.
    MATLAB's parallel toolbox requires data structures
    to be rearranged in an edge-oriented way.
    To this end, the function \texttt{C0IP\_collectPatch}
    rearranges the arrays \texttt{Hess4e} and \texttt{Grad4e}
    to
    \[
        \mathtt{Hess4s}
        \in
        \R^{6 \times 2 \times 2 \times 2 \times \vert \E \vert}
        \quad\text{and}\quad
        \mathtt{Grad4s}
        \in
        \R^{6 \times 2 \times 6 \times 2 \times \vert \E \vert}.
    \]
    For each edge, they contain the corresponding information
    on the adjacent triangles \(T_+, T_- \in \T\)
    specified by the index in the fourth component
    (index 1 for \(T_+\) and 2
    for \(T_-\)).
    The rearrangement employs a data structure
    \texttt{e4s} whose rows contain the global indices
    of \(T_+\) and \(T_-\) for interior edges
    or the global index of \(T_+\) and \(0\) for boundary edges.
\end{remark}

\subsection{Stiffness matrix and right-hand side}
\label{sec:stiffness}

The contribution to the bilinear form \(a_\PW\)
from the triangle \(T \in \T\)
leads to the local stiffness matrix
\(A(T) \in \R^{6 \times 6}\) with,
for \(\alpha, \beta = 1,\dots, 6\),
\begin{equation}
    \label{eq:local_stiffness}
    A_{\alpha\beta}(T)
    \coloneqq
    \int_T \D^2 \varphi_\alpha : \D^2 \varphi_\beta \;\mathrm{d}x
    =
    \vert T \vert\, (\D^2 \varphi_\alpha : \D^2 \varphi_\beta).
\end{equation}
\begin{lemma}
    Let the array
    \textup{\texttt{area4e}} \(\in \R^{\vert \T \vert}\)
    contain the area \(\vert T \vert\)
    of each triangle \(T \in \T\)
    and \textup{\texttt{Hessian}}
    \(\in \R^{2 \times 2 \times 6}\)
    the evaluation of the Hessians of the six basis
    functions from Lemma~\ref{lem:Hessian}.
    The following lines
    compute~\eqref{eq:local_stiffness}.
\begin{lstlisting}[numbers=none]
A4e(:,:,e) = area4e(e) * sum(permute(Hessian, [3 4 1 2]) ...
                             .* permute(Hessian, [4 3 1 2]), [3 4]);
\end{lstlisting}
\end{lemma}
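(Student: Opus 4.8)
The plan is to verify the single \texttt{MATLAB} line by tracking the size and index layout of every intermediate array and checking that the composition of the two \texttt{permute} calls, the elementwise product, the summation, and the scalar multiplication reproduces the Frobenius contraction in~\eqref{eq:local_stiffness}. The analytic content is already contained in~\eqref{eq:local_stiffness}: because each shape function $\varphi_\alpha \in P_2(T)$ is quadratic, its Hessian $\D^2\varphi_\alpha$ is constant on $T$, so the integral collapses to $\vert T \vert$ times the pointwise Frobenius product. Thus the only thing left to establish is that the code evaluates, for every pair $\alpha,\beta \in \{1,\dots,6\}$,
\[
    \D^2\varphi_\alpha : \D^2\varphi_\beta
    =
    \sum_{i,j=1}^{2} (\D^2\varphi_\alpha)_{ij}\,(\D^2\varphi_\beta)_{ij},
\]
and then multiplies by the area stored in \texttt{area4e(e)}.

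First I would recall MATLAB's convention that \texttt{permute(A,order)} turns dimension \texttt{order(k)} of \texttt{A} into dimension \texttt{k} of the output, with the absent fourth dimension of the $2\times2\times6$ array \texttt{Hessian} treated as a singleton. Reading off the two calls on the slice $\texttt{Hessian}(i,j,\alpha)=(\D^2\varphi_\alpha)_{ij}$, the call \texttt{permute(Hessian,[3 4 1 2])} produces a $6\times1\times2\times2$ array $P$ with $P(\alpha,1,i,j)=(\D^2\varphi_\alpha)_{ij}$, while \texttt{permute(Hessian,[4 3 1 2])} produces a $1\times6\times2\times2$ array $Q$ with $Q(1,\beta,i,j)=(\D^2\varphi_\beta)_{ij}$. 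The point of placing the basis index in the first slot of $P$ and the second slot of $Q$, leaving complementary singletons, is precisely to prepare the broadcasting in the next step.

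Next I would invoke MATLAB's implicit expansion for the elementwise product of $P$ and $Q$: the complementary singleton dimensions are expanded, so the product is the $6\times6\times2\times2$ array $R$ with $R(\alpha,\beta,i,j)=(\D^2\varphi_\alpha)_{ij}\,(\D^2\varphi_\beta)_{ij}$ ranging over all index pairs. Summing over the two matrix-entry dimensions (dimensions \texttt{3} and \texttt{4}) collapses $R$ to the $6\times6$ matrix $S(\alpha,\beta)=\sum_{i,j}(\D^2\varphi_\alpha)_{ij}(\D^2\varphi_\beta)_{ij}=\D^2\varphi_\alpha:\D^2\varphi_\beta$. Finally, scaling by $\texttt{area4e(e)}=\vert T\vert$ yields $\vert T\vert\,(\D^2\varphi_\alpha:\D^2\varphi_\beta)=A_{\alpha\beta}(T)$ and writes it into the \texttt{e}-th slice \texttt{A4e(:,:,e)}, which is exactly~\eqref{eq:local_stiffness}.

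The hard part is purely the bookkeeping of MATLAB's array semantics rather than any mathematics: one must apply the \texttt{permute} convention correctly (it is easy to confuse it with its inverse), and one must confirm that the two complementary singleton dimensions of $P$ and $Q$ are expanded by implicit expansion so that every pair $(\alpha,\beta)$ is enumerated exactly once without collision. Once the layouts of $P$, $Q$, and $R$ are pinned down, the identity with~\eqref{eq:local_stiffness} follows immediately from the definition of the Frobenius inner product and the constancy of the Hessians established around~\eqref{eq:Hessian}.
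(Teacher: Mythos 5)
Your proof is correct and takes essentially the same approach as the paper's, which likewise argues that the code forms all pairwise Frobenius scalar products \(\D^2\varphi_\alpha : \D^2\varphi_\beta\) of the constant Hessians and then scales by the area \(\vert T \vert\). Your explicit tracking of the \texttt{permute} index layouts (\(P(\alpha,1,i,j)\), \(Q(1,\beta,i,j)\)) and of the implicit expansion is simply a more detailed rendering of the bookkeeping the paper states tersely.
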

\begin{proof}
    The pairwise Frobenius scalar products
    of the Hessian matrices
    \(\D^2 \varphi_\alpha\), \(\alpha = 1,\dots,6\)
    from the array \texttt{Hessian}
    provide the array
    \texttt{Integrand} \(\in \R^{6 \times 6}\).
    The multiplication with the area
    completes the computation of~\eqref{eq:local_stiffness}.
\end{proof}

The local contribution \(b(T) \in \R^6\)
from the triangle \(T \in \T\) to the right-hand side
consists of approximations to the integral
\(\int_T f \varphi_\alpha \,\mathrm{d} x\) for
\(\alpha = 1, \dots, 6\).
Its computation employs
the edge-oriented midpoint quadrature rule
and the dual basis property~\eqref{eq:dual_basis}
\begin{equation}
    \label{eq:local_RHS}
    b_\alpha(T)
    \coloneqq
    \frac{\vert T \vert}{3}
    \sum_{\ell = 1}^3
    f(M_\ell) \varphi_\alpha(M_\ell)
    =
    \begin{cases}
        0 & \text{for } \alpha = 1, 2, 3,\\
        \displaystyle
        \frac{\vert T \vert}{3}
        f(M_{\alpha - 3})
        & \text{for } \alpha = 4, 5, 6.
    \end{cases}
\end{equation}
\begin{lemma}
    Let the rows of the array
    \textup{\texttt{n4s}}
    \(\in \N^{\vert \E \vert \times 2}\)
    specify the global numbers
    of the two endpoints of each edge.
    Its order determines an enumeration of the edges
    of the triangulation and the numbers
    of the three edges of each triangle
    form the rows of an array
    \textup{\texttt{s4e}}
    \(\in \N^{\vert \T \vert \times 3}\).
    Suppose that the array
    \textup{\texttt{mid4s}}
    \(\in \R^{\vert \E \vert \times 2}\)
    contains the coordinates
    of the midpoints of the edges.
    Given a function handle \texttt{f}
    describing the right-hand side
    \(f: \overline\Omega \to \R\),
    the evaluations of \(f\) required for~\eqref{eq:local_RHS}
    read
\begin{lstlisting}[numbers=none]
f4mid = f(mid4s);
valF4e = f4mid(s4e);
\end{lstlisting}
    On the triangle \(T\) with index \textup{\texttt{e}},
    the following line computes~\eqref{eq:local_RHS}.
\begin{lstlisting}[numbers=none]
b4e(:,e) = [zeros(1, 3), area4e(e) / 3 * valF4e(e,:)];
\end{lstlisting}
\end{lemma}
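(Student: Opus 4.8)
The plan is to reduce the quadrature sum in~\eqref{eq:local_RHS} to its closed form by means of the dual-basis property~\eqref{eq:dual_basis}, and then to recognise the three lines of MATLAB as a verbatim transcription of that closed form. First I would invoke~\eqref{eq:dual_basis}: the vertex shape functions obey $\varphi_j(M_n) = 0$ for all $j, n \in \{1,2,3\}$, so each summand of $\frac{\vert T \vert}{3} \sum_{\ell=1}^3 f(M_\ell)\varphi_\alpha(M_\ell)$ vanishes whenever $\alpha \in \{1,2,3\}$, which yields the first case of~\eqref{eq:local_RHS}. For $\alpha \in \{4,5,6\}$ the edge shape functions obey $\varphi_{3+j}(M_n) = \delta_{jn}$, so out of the three terms only the one with $\ell = \alpha - 3$ survives and equals $\frac{\vert T \vert}{3} f(M_{\alpha-3})$; this is the second case. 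Hence the local load vector is $b(T) = (0,0,0,\tfrac{\vert T\vert}{3} f(M_1),\tfrac{\vert T\vert}{3} f(M_2),\tfrac{\vert T\vert}{3} f(M_3))$.

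Second, I would verify that the code assembles exactly this vector. The command \texttt{f4mid = f(mid4s)} evaluates $f$ at the midpoints of all edges and stores the result indexed by the global edge number, so that the \texttt{s}-th entry of \texttt{f4mid} equals $f(\Mid(E))$ for the edge $E$ of number \texttt{s}. The reindexing \texttt{valF4e = f4mid(s4e)} then gathers, into row $e$, the three values of $f$ at the midpoints of the three edges of $T$; by the convention that the $j$-th column of \texttt{s4e} holds the edge $E_j$ opposite the vertex $P_j$ (Figure~\ref{subfig:triangle}), this row reads $[\,f(M_1)\ f(M_2)\ f(M_3)\,]$. Finally \texttt{b4e(:,e) = [zeros(1,3), area4e(e)/3 * valF4e(e,:)]} prepends three zeros and scales these three values by $\vert T \vert / 3$, producing precisely the six-vector identified above and thereby reproducing~\eqref{eq:local_RHS}.

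The only delicate point, and where I would spend the care, is the index bookkeeping in the vectorised access \texttt{f4mid(s4e)}: one must check that the ordering of the edges within a row of \texttt{s4e} agrees with the local enumeration $M_1, M_2, M_3$ that numbers the shape functions $\varphi_4, \varphi_5, \varphi_6$, so that the entries of \texttt{valF4e(e,:)} land in positions $4,5,6$ of \texttt{b4e} in the correct order. Granting the stated local convention for \texttt{s4e} (ensured upstream by \texttt{fixLocalEnumeration} and \texttt{C0IP\_geometry}), this alignment is immediate and the zero-padding together with the scaling completes the verification.
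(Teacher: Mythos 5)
Your verification is correct and follows essentially the same route as the paper's own proof: evaluate \(f\) at the edge midpoints, gather the three values per triangle via \texttt{s4e}, and scale by the quadrature weight \(\vert T \vert / 3\) while zero-padding the vertex entries. The dual-basis reduction you perform at the start is sound but already folded into the derivation of~\eqref{eq:local_RHS} in the text preceding the lemma, so the paper's proof omits it and records only the code-to-formula correspondence that you also establish.
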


\begin{proof}
    The right-hand side \(f\) is evaluated
    in the midpoints of each edge first.
    The array \texttt{s4e} allows
    to select the three point evaluations
    of \(f\) on each triangle into the array
    \texttt{valF4e} \(\in \R^{\vert \T \vert \times 3}\).
    On the triangle \(T\),
    the multiplication with the quadrature weigths
    completes the computation of~\eqref{eq:local_RHS}.
\end{proof}

\subsection{Jump term} \label{sec:jumps}

This subsection presents the MATLAB realization
of averages and jumps of the derivatives
from Section~\ref{sec:discrete_functions_derivatives}
as well as the local contributions
to the bilinear form \(\mathcal{J}\).

Recall the definition of the basis functions
\(\phi_1, \dots, \varphi_9\)
associated with an edge patch \(\omega_E\)
for \(E \in \E\)
from Subsection~\ref{sec:notation-indices}.
The evaluations of the normal jumps
of the gradients \(\nabla \phi_1, \dots, \nabla \phi_9\)
in the nodes \(Q_1, Q_2, Q_3\) on the edge \(E\)
read,
for \(\alpha = 1, \dots, 9\)
and \(\ell = 1, 2, 3\),
\begin{equation}
    \label{eq:evaluation_normal_jump}
    [\nabla \phi_\alpha (Q_\ell) \cdot \nu_E]_E
    =
    \begin{cases}
        \big(
        \nabla \phi^+_\alpha (Q_\ell)
        - \nabla \phi^-_\alpha (Q_\ell)
        \big) \cdot \nu_E
        &\quad\text{if } E \in \E(\Omega),\\
        \nabla \phi^+_\alpha (Q_\ell) \cdot \nu_E
        &\quad\text{if } E \in \E(\partial\Omega),
    \end{cases}
\end{equation}

\begin{lemma}
	\label{lem:computation_jumpnormal}
	Let the rows of the array \textup{\texttt{normal4s}}
	\(\in \R^{\vert \E \vert \times 2}\)
	contain the two components of the unit normal
	vector \(\nu_E \in \R^2\) for each edge \(E\).
	Assume that its sign is chosen such that
	\(\nu_E = \nu_{T_+}\) for the outward unit normal vector
	\(\nu_{T_+}\) of the triangle \(T_+ \in \T\).
	Suppose that the five-dimensional array
	\textup{\texttt{Grad4s}}
	\(\in \R^{6 \times 2 \times 6 \times 2 \times \vert \E \vert}\)
	contains the evaluations of the
	gradients of the six basis functions
	\(\nabla \phi^\pm_\alpha\),
	\(\alpha = 1, \dots, 6\),
	in the six nodes on \(T_\pm\)
	for each edge \(E \in \E\).
	Recall the index array
	\textup{\texttt{Ind}} \(\in \N^{6 \times 2}\)
	from~\eqref{eq:local4s}
	The evaluation of the normal jumps
	from~\eqref{eq:evaluation_normal_jump} reads
\begin{lstlisting}[numbers=none,basicstyle=\upshape\ttfamily\scriptsize]
GradNormal = permute(sum(Grad4s(:,:,:,:,s) .* normal4s(s,:), 2), [1 3 4 2]);
JumpNormal = [GradNormal(Ind(1:3,1),Ind(1:3,1),1) ...
              - GradNormal(Ind(3:-1:1,2),Ind(3:-1:1,2),2); ...
              GradNormal(Ind(4:6,1),Ind(1:3,1),1); ...
              - GradNormal(Ind(4:6,2),Ind(3:-1:1,2),2)];
\end{lstlisting}
\end{lemma}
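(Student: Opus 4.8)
The plan is to verify that the two arrays \texttt{GradNormal} and \texttt{JumpNormal} produced by the listing carry exactly the scalar quantities $\nabla\phi_\alpha^\pm(Q_\ell)\cdot\nu_E$ and $[\nabla\phi_\alpha(Q_\ell)\cdot\nu_E]_E$, after which the claim~\eqref{eq:evaluation_normal_jump} follows by inspection. First I would trace the construction of \texttt{GradNormal}. The slice \texttt{Grad4s(:,:,:,:,s)} holds the gradient evaluations indexed by (local basis function, gradient component, local node, sign $\pm$); the elementwise product with \texttt{normal4s(s,:)} multiplies the two gradient components by the corresponding components of $\nu_E$, and \texttt{sum(...,2)} contracts the component dimension, i.e.\ forms the Euclidean inner product $\nabla\phi_\alpha^\pm(Q)\cdot\nu_E$. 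The subsequent \texttt{permute([1 3 4 2])} discards the collapsed singleton and reorders the remaining axes, so that $\texttt{GradNormal}(\beta,n,\pm)=\nabla\varphi_\beta^\pm(\text{node }n)\cdot\nu_E$ with $\beta$ and $n$ in the per-triangle local enumeration of Figure~\ref{subfig:triangle} and $\pm$ selected by the last index ($1$ for $T_+$, $2$ for $T_-$). This step is routine once the broadcasting convention of MATLAB is made explicit.

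Second I would justify the role of the index array \texttt{Ind} from~\eqref{eq:local4s}. By the duality~\eqref{eq:dual_basis}, the patch basis function $\phi_\beta$ restricted to $T_+$ equals the local shape function carrying the local index $\texttt{Ind}(\beta,1)$, and the patch node $Q_\ell$ is the local node numbered $\texttt{Ind}(\ell,1)$; the same holds for $T_-$ with the second column. Hence \texttt{GradNormal(Ind(1:3,1),Ind(1:3,1),1)} is precisely the $3\times 3$ matrix with entries $\nabla\phi_\alpha^+(Q_\ell)\cdot\nu_E$ over the three edge functions $\alpha\in\{1,2,3\}$ and the three edge nodes $\ell\in\{1,2,3\}$. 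The delicate point is the $T_-$ contribution: reading off~\eqref{eq:local4s} together with the convention $\Mid(E)=M_q^+=M_r^-$ and Figure~\ref{subfig:edge_patch}, the triple $(Q_1,Q_2,Q_3)=(P_{r-1}^-,M_r^-,P_{r+1}^-)$ appears in the counter-clockwise $T_-$ enumeration in the \emph{reversed} order relative to $T_+$, so $\texttt{Ind(1:3,2)}$ in fact lists the $T_-$ indices of $(Q_3,Q_2,Q_1)$. Applying the flip \texttt{3:-1:1} restores the alignment with $(Q_1,Q_2,Q_3)$, which is exactly why both the basis-function and the node index enter as \texttt{Ind(3:-1:1,2)}.

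Finally I would assemble the jump using the definition~\eqref{eq:jump} across $E$ under the orientation $\nu_E=\nu_{T_+}$, namely $[\nabla\phi_\alpha\cdot\nu_E]_E = \nabla\phi_\alpha^+\cdot\nu_E - \nabla\phi_\alpha^-\cdot\nu_E$ for an interior edge. The three stacked blocks of \texttt{JumpNormal} then match the three groups of patch functions from Figure~\ref{subfig:edge_patch_dofs}: the edge functions $\phi_1,\phi_2,\phi_3$, nonzero on both triangles, produce the difference of the $T_+$ and $T_-$ submatrices; the interior functions $\phi_4,\phi_5,\phi_6$ of $T_+$ have vanishing $T_-$ restriction, leaving only the $T_+$ block; and $\phi_7,\phi_8,\phi_9$ of $T_-$ contribute $-\nabla\phi_\alpha^-\cdot\nu_E$, which explains the leading minus sign. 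Combining the blocks reproduces~\eqref{eq:evaluation_normal_jump} for $E\in\E(\Omega)$, while the boundary case $E\in\E(\partial\Omega)$ follows because $\phi_7,\phi_8,\phi_9\equiv 0$ and only the $T_+$ contribution survives. I expect the main obstacle to be the careful verification of this reversal and of the cyclic index arithmetic of~\eqref{eq:local4s}, i.e.\ confirming node by node that \texttt{Ind(3:-1:1,2)} and \texttt{Ind(1:3,1)} genuinely point to the same geometric nodes $Q_1,Q_2,Q_3$ with consistent basis-function labelling; everything else reduces to bookkeeping and the elementary inner-product computation of the first step.
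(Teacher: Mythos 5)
Your proof is correct and takes essentially the same route as the paper's own (much terser) proof: the inner-product contraction producing \texttt{GradNormal}, the translation between patch and local indices via \texttt{Ind}, and the three stacked blocks under the orientation $\nu_E = \nu_{T_+}$; you additionally verify explicitly that $(Q_1,Q_2,Q_3)$ appears reversed in the counter-clockwise $T_-$ enumeration, which is exactly the justification for the flip \texttt{3:-1:1} that the paper leaves implicit. The only point to make explicit in the boundary case is that the code relies on the zero entries stored in \texttt{Grad4s} for the missing triangle $T_-$ (the paper notes that \texttt{GradNormal} vanishes on $T_-$ for boundary edges), which your phrase ``only the $T_+$ contribution survives'' presupposes, since the first block also subtracts a \texttt{GradNormal(\dots,2)} term.
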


\begin{proof}
	The array \texttt{GradNormal}
	\(\in \R^{6 \times 6 \times 2}\)
	contains the evaluations of the normal derivatives
	\(\nabla \phi_\alpha^\pm \cdot \nu_E\) for \(\alpha = 1,\dots, 6\)
	at the nodes \(P_1^\pm, P_2^\pm, P_3^\pm\),
	\(M_1^\pm, M_2^\pm, M_3^\pm\) on \(T_\pm\).
	Their linear combination lead to the jumps
	along the edge in the array \texttt{JumpNormal}
	\(\in \R^{9 \times 3}\)
	for the basis functions \(\phi_\alpha\)
	associated to the nodes \(Q_\alpha\)
	for \(\alpha = 1,\dots,9\).
	The first three basis functions \(\phi_1, \phi_2, \phi_3\)
	belong to the edge \(E\) and the jump
	consists of the difference of contributions
	from \(T_+\) and \(T_-\).
	The array \texttt{Ind} selects the corresponding
	basis functions on the adjacent triangles
	as well as the corresponding nodes \(Q_1, Q_2, Q_3\)
	belonging to the edge \(E\).
	Note that \texttt{GradNormal} vanishes on \(T_-\)
	for boundary edges.
	The basis functions \(\phi_4, \phi_5, \phi_6\)
	vanish on \(T_-\) and
	the jump of their normal derivatives from \(T_+\)
	is computed with 0.
	Vice versa, the basis functions \(\phi_7, \phi_8, \phi_9\)
	lead to the jump of 0 with the normal derivatives from \(T_-\).
\end{proof}

The averages of the piecewise Hessian matrices
from \eqref{eq:Hessian} read,
for \(\alpha = 1, \dots, 9\),
\begin{equation}
    \label{eq:Hessian_average}
    \langle (\D^2_\PW \phi_\alpha \nu_E) \cdot \nu_E \rangle_E
    =
    \begin{cases}
        ((\D^2 \phi^+_\alpha + \D^2 \phi^-_\alpha)
        \nu_E / 2) \cdot \nu_E
        &\quad\text{if } E \in \E(\Omega),\\
        (\D^2 \phi^+_\alpha \, \nu_E) \cdot \nu_E
        &\quad\text{if } E \in \E(\partial\Omega).
    \end{cases}
\end{equation}

\begin{lemma}
    \label{lem:Hessian_average}
    The following lines compute the average~\eqref{eq:Hessian_average}.
\begin{lstlisting}[numbers=none]
HessBinormal = ...
    permute(sum(Hess4s(:,:,:,:,s) .* normal4s(s,:) ...
            .* permute(normal4s(s,:), [1 3 2]), [2 3]), [1 4 2 3]);
MeanHessBinormal = ...
    [HessBinormal(Ind(1:3,1),1) + HessBinormal(Ind(3:-1:1,2),2);...
     HessBinormal(Ind(4:6,1),1); HessBinormal(Ind(4:6,2),2)];
if isInteriorSide(s); MeanHessBinormal = MeanHessBinormal / 2; end
\end{lstlisting}
\end{lemma}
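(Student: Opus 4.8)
The plan is to track the content of each array through the three lines of code and to match the result against the two cases of~\eqref{eq:Hessian_average}. First I would recall from Remark~\ref{rem:edge_oriented_arrays} that, for the edge with index \texttt{s}, the slice \texttt{Hess4s(:,:,:,:,s)} stores for each of the six local basis functions and each adjacent triangle \(T_\pm\) the constant Hessian \(\D^2 \varphi^\pm_\alpha \in \mathbb{S}\) from Lemma~\ref{lem:Hessian}: the first index enumerates the basis function, the second and third index the matrix entries, and the fourth selects \(T_+\) (index~1) or \(T_-\) (index~2).

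For the first line I would observe that the elementwise multiplication by \texttt{normal4s(s,:)} (broadcast along the second array dimension) and by its permutation \texttt{permute(normal4s(s,:), [1 3 2])} into the third dimension contracts the Hessian with \(\nu_E\) from both matrix indices, so that \texttt{sum(..., [2 3])} evaluates the binormal part \((\D^2 \varphi^\pm_\alpha \, \nu_E) \cdot \nu_E = \sum_{i,j} (\D^2 \varphi^\pm_\alpha)_{ij} (\nu_E)_i (\nu_E)_j\); symmetry of the Hessian makes the order of contraction irrelevant. The subsequent \texttt{permute(..., [1 4 2 3])} collapses the now-singleton matrix dimensions and moves the triangle index into second position, leaving \texttt{HessBinormal} \(\in \R^{6 \times 2}\) whose \((\alpha, \pm)\) entry is exactly this scalar.

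The second line assembles the nine patch basis functions \(\phi_1, \dots, \phi_9\) from these per-triangle values by means of the index array \texttt{Ind} from~\eqref{eq:local4s}. The decisive step is the enumeration bookkeeping encoded in Figures~\ref{subfig:edge_patch} and~\ref{subfig:edge_patch_dofs}: the edge-supported functions \(\phi_1, \phi_2, \phi_3\) receive contributions from both triangles, and \texttt{Ind(1:3,1)} together with the reversed \texttt{Ind(3:-1:1,2)} selects the local indices in \(T_+\) and \(T_-\) that correspond to the common nodes \(Q_1, Q_2, Q_3\) on \(E\); the order reversal reflects that these three edge nodes are traversed in opposite orientation from the two sides, in accordance with~\eqref{eq:Simpson_nodes}. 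The functions \(\phi_4, \phi_5, \phi_6\) are supported only in \(T_+\), so only the side-1 value enters, and symmetrically \(\phi_7, \phi_8, \phi_9\) draw only on the side-2 value in \(T_-\).

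Finally, for an interior edge the assembled quantity equals \((\D^2 \varphi^+_\alpha + \D^2 \varphi^-_\alpha)\nu_E \cdot \nu_E\) for the edge functions and a single trace for the one-sided functions, so the conditional division by~2 produces the average \(\langle \, \cdot \, \rangle_E\) of the first case of~\eqref{eq:Hessian_average}; the one-sided functions are handled correctly because the absent neighbour contributes a vanishing Hessian. For a boundary edge \(E \in \E(\partial\Omega)\) the second side carries no triangle (cf.\ the zero entry in \texttt{e4s}), no division is applied, and \texttt{MeanHessBinormal} reduces to \((\D^2 \varphi^+_\alpha \, \nu_E) \cdot \nu_E\), matching the second case. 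I expect the main obstacle to be the verification that \texttt{Ind} and the reversal \texttt{3:-1:1} faithfully implement the local-to-patch correspondence of Figure~\ref{fig:edge_patch}; the contraction arithmetic in the first line is routine by comparison.
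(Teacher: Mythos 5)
Your proposal is correct and follows essentially the same route as the paper's proof: the first line yields \texttt{HessBinormal} \(\in \R^{6\times 2}\) with entries \((\D^2\phi^\pm_\alpha\,\nu_E)\cdot\nu_E\), the second line assembles the nine patch values via \texttt{Ind}, and the prefactor \(1/2\) is applied for interior edges only. The only difference is presentational: where the paper disposes of the index bookkeeping by declaring it ``analogous to \texttt{JumpNormal}'' in Lemma~\ref{lem:computation_jumpnormal}, you spell out the role of \texttt{Ind(3:-1:1,2)} and the vanishing \(T_-\) data on boundary edges explicitly, which is a faithful expansion of the same argument.
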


\begin{proof}
    The array \texttt{HessBinormal}
    \(\in \R^{6 \times 2}\)
    contains the scalar product
    \(
        (\D^2 \phi^\pm_\alpha \, \nu_E) \cdot \nu_E
    \)
    of each Hessian matrix
    \(\D^2 \phi^\pm_\alpha\) for \(\alpha = 1, \dots, 6\)
    on \(T_+\) and \(T_-\)
    with the unit normal vector \(\nu_E\).
    The linear combination for the averages
    \(\langle (\D^2_\PW \phi_\alpha \nu_E) \cdot \nu_E \rangle_E\)
    along \(E\) for \(\alpha = 1, \dots, 9\)
    is computed analogously to \texttt{JumpNormal}
    in the proof of Lemma~\ref{lem:computation_jumpnormal}
    with constant Hessian matrices replacing
    the gradient evaluations.
    The averages are stored in the array
    \texttt{MeanHessBinormal} \(\in \R^9\).
    Finally, the prefactor \(1/2\) is applied for
    interior edges only according to~\eqref{eq:Hessian_average}.
\end{proof}

The contribution to the bilinear form \(\mathcal{J}\)
from the edge \(E \in \E\)
consists of the local matrix \(J(E) \in \R^{9 \times 9}\) with,
for \(\alpha, \beta = 1,\dots, 9\),
\begin{equation}
    \label{eq:local_jump_term}
    \begin{split}
        J_{\alpha\beta}(E)
        &\coloneqq
        \int_E
        \langle (\D^2_\PW \phi_\alpha \nu_E) \cdot \nu_E \rangle_E
        \cdot
        [\nabla \phi_\beta \cdot \nu_E]_E
        \;\mathrm{d}s
        \\
        &=
        \vert E \vert\,
        \langle (\D^2_\PW \phi_\alpha \nu_E) \cdot \nu_E \rangle_E
        \cdot
        [\nabla \phi_\beta (Q_2) \cdot \nu_E]_E.
    \end{split}
\end{equation}
If \(E \in \E(\partial\Omega)\),
\(\phi_7, \phi_8, \phi_9 \equiv 0\)
implies that
\(J_{\alpha\beta}(E) = 0\) vanishes for all \(\alpha, \beta = 1,\dots,9\)
with \(7 \leq \alpha\) or \(7 \leq \beta\).

\begin{lemma}
    Let the array \textup{\texttt{length4s}}
    \(\in \R^{\vert \E \vert}\)
    contain the length of each edge.
    The following line computes~\eqref{eq:local_jump_term}.
\begin{lstlisting}[numbers=none]
J4s(:,:,s) = length4s(s) * (MeanHessBinormal * JumpNormal(:,2)');
\end{lstlisting}
\end{lemma}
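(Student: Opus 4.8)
The plan is to establish the second equality in~\eqref{eq:local_jump_term} first, by reducing the edge integral to a single midpoint evaluation, and then to read off the MATLAB line as the assembly of a scaled outer product. First I would record the polynomial structure of the two factors in the integrand. Each $\phi_\alpha$ is piecewise quadratic, so its piecewise Hessian $\D^2_\PW \phi_\alpha$ is piecewise constant; consequently the average $\langle (\D^2_\PW \phi_\alpha \, \nu_E) \cdot \nu_E \rangle_E$ is independent of the evaluation point and is a genuine constant along $E$. By contrast, the gradient $\nabla \phi_\beta$ is piecewise affine, so the normal jump $[\nabla \phi_\beta \cdot \nu_E]_E$ restricts to an affine function of the arc-length parameter on $E$.

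The integrand in the first line of~\eqref{eq:local_jump_term} is therefore the product of a constant and an affine function, hence affine along $E$. Since the midpoint rule integrates affine functions exactly, I would invoke
\[
    \int_E [\nabla \phi_\beta \cdot \nu_E]_E \ds
    =
    |E|\, [\nabla \phi_\beta(Q_2) \cdot \nu_E]_E,
\]
where $Q_2 = \Mid(E)$ is the midpoint node in the enumeration~\eqref{eq:Simpson_nodes}. Pulling the constant average out of the integral then yields precisely the second line of~\eqref{eq:local_jump_term}. This reduction is the conceptual core of the argument, and the main point to verify is that $Q_2$ is indeed the midpoint of $E$ and that a single evaluation at $Q_2$ suffices, because the constant Hessian factor contributes no additional polynomial degree that the midpoint rule would fail to capture.

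It then remains to match this factorized expression with the MATLAB code. From Lemma~\ref{lem:Hessian_average}, the vector \texttt{MeanHessBinormal} $\in \R^9$ holds $\langle (\D^2_\PW \phi_\alpha \, \nu_E) \cdot \nu_E \rangle_E$ for $\alpha = 1, \dots, 9$, and from Lemma~\ref{lem:computation_jumpnormal} the second column \texttt{JumpNormal(:,2)} holds the normal jumps $[\nabla \phi_\beta(Q_2) \cdot \nu_E]_E$ at the midpoint node for $\beta = 1, \dots, 9$. The outer product \texttt{MeanHessBinormal * JumpNormal(:,2)'} therefore produces the $9 \times 9$ matrix whose $(\alpha,\beta)$ entry equals $\langle (\D^2_\PW \phi_\alpha \, \nu_E) \cdot \nu_E \rangle_E \cdot [\nabla \phi_\beta(Q_2) \cdot \nu_E]_E$, and multiplication by \texttt{length4s(s)} $= |E|$ supplies the outstanding factor $|E|$. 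Finally I would note that the boundary case $E \in \E(\partial\Omega)$ needs no separate treatment: the convention $\phi_7, \phi_8, \phi_9 \equiv 0$ forces the corresponding entries of both factors to vanish, so that $J_{\alpha\beta}(E) = 0$ whenever $7 \leq \alpha$ or $7 \leq \beta$, exactly as asserted after~\eqref{eq:local_jump_term}.
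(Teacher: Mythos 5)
Your proof is correct and follows essentially the same route as the paper: the paper's one-sentence proof simply observes that the pairwise products of the evaluations from Lemmas~\ref{lem:computation_jumpnormal} and~\ref{lem:Hessian_average}, scaled by the edge length \texttt{length4s(s)}, realize the outer-product formula in~\eqref{eq:local_jump_term}, which is exactly your code-matching step. Your additional justification of the second equality in~\eqref{eq:local_jump_term} --- constant Hessian average times affine normal jump, so a single evaluation at \(Q_2 = \Mid(E)\) integrates the product exactly --- correctly fills in a reduction that the paper asserts in the displayed equation without proof.
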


\begin{proof}
    The pairwise scalar products for
    \(\alpha, \beta = 1,\dots, 9\)
    of the evaluations from the
    Lemmas~\ref{lem:computation_jumpnormal}
    and~\ref{lem:Hessian_average}
    followed by the multiplication
    with the length of the edge \(E\)
    realize to the computation of
    the integral in~\eqref{eq:local_jump_term}.
\end{proof}

\subsection{Penalty term}
\label{sec:penalty}
In this subsection,
the automatic choice of the penalty parameter
\(\sigma_{\IP,E}\) from~\eqref{eq:definition_sigma}
is the point of departure.
\begin{lemma}
	\label{lem:computation_sigma}
	Let the vector \textup{\texttt{area4e}}
	\(\in \R^{\vert \T \vert}\)
	provide the area of each triangle
	and \textup{\texttt{length4s}}
	\(\in \R^{\vert \E \vert}\)
	the length of each edge.
	Suppose the rows \textup{\texttt{e4s}}
	\(\in \N_0^{\vert \E \vert \times 2}\)
	contain the global indices of \(T_+\) and \(T_- \in \T\)
	for interior edges
	\(E = \partial T_+ \cap \partial T_- \in \E(\Omega)\)
	or the global index of \(T_+ \in \T\) and \(0\)
	for boundary edges
	\(E \in \E(T_+) \cap \E(\partial\Omega)\).
	Let the boolean array
	\textup{\texttt{isInteriorSide}}
	\(\in \{0, 1\}^{\vert \E \vert}\)
	determine whether an edge belongs to the
	interior of the domain.
	Given a parameter \textup{\texttt{a}}
	\( \coloneqq a > 1\),
	the following lines compute
	the automatic penalization parameter \(\sigma_{\IP,E}\)
	from~\eqref{eq:definition_sigma}.
\begin{lstlisting}[numbers=none]
sigma4s = 3 * a * (length4s.^2) ./ area4e(Indices.e4s(:,1));
sigma4s(isInteriorSide) = sigma4s(isInteriorSide) / 4 ...
                          + 3/4 * a * length4s(isInteriorSide).^2 ...
                            ./ area4e(Indices.e4s(isInteriorSide,2));
\end{lstlisting}
\end{lemma}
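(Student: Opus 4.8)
The plan is to verify the two displayed MATLAB lines against the closed form of $\sigma_{\IP,E}$ in~\eqref{eq:definition_sigma} by tracing the computation edge-by-edge. The decisive simplification is that this self-contained implementation treats only the lowest-order case $k = 2$, so that $k(k-1) = 2$ throughout. Substituting this single value into~\eqref{eq:definition_sigma} reduces the target to
\[
    \sigma_{\IP,E}
    =
    \begin{cases}
        \dfrac{3a\,h_E^2}{4}\left(\dfrac{1}{\vert T_+\vert} + \dfrac{1}{\vert T_-\vert}\right)
        & \text{if } E \in \E(\Omega),\\[2.2ex]
        \dfrac{3a\,h_E^2}{\vert T_+\vert}
        & \text{if } E \in \E(\partial\Omega).
    \end{cases}
\]
Everything then comes down to checking that the code reproduces these two cases.

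First I would analyze the initial assignment to \texttt{sigma4s}, which squares \texttt{length4s} (storing $h_E = \vert E\vert$) and divides by \texttt{area4e} evaluated at the first column \texttt{e4s(:,1)}. Since that column holds the global index of $T_+$ for every edge, this line evaluates $3a\,h_E^2/\vert T_+\vert$ for \emph{all} edges at once. For a boundary edge this already equals $\sigma_{\IP,E}$ by the second case above, so those entries are final and require no correction. For an interior edge it supplies only the $T_+$ contribution with the wrong prefactor, to be repaired in the second step.

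Then I would turn to the in-place update restricted to the logical mask \texttt{isInteriorSide}. On those entries the code replaces the stored value by one quarter of itself plus a new $T_-$ term. Inserting the value $3a\,h_E^2/\vert T_+\vert$ produced by the first line gives
\[
    \frac{1}{4}\cdot\frac{3a\,h_E^2}{\vert T_+\vert}
    + \frac{3}{4}\,a\,h_E^2\cdot\frac{1}{\vert T_-\vert}
    =
    \frac{3a\,h_E^2}{4}\left(\frac{1}{\vert T_+\vert}+\frac{1}{\vert T_-\vert}\right),
\]
where the second summand reads $\vert T_-\vert$ from \texttt{area4e} at the second column \texttt{e4s(:,2)}, again restricted to interior sides. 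This is exactly the first case above, which completes the verification.

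The computation carries no analytic content; the only point demanding care is the array-index bookkeeping, which I expect to be the main (though minor) obstacle. I must confirm that \texttt{e4s(:,1)} and \texttt{e4s(:,2)} consistently name $T_+$ and $T_-$ in the orientation convention fixed for \texttt{e4s}, that \texttt{length4s} stores $h_E$ and not $h_E^2$, and that \texttt{isInteriorSide} selects precisely $\E(\Omega)$, so that boundary edges keep the value from the first line untouched. Finally, because the construction of \texttt{e4s} places $0$ in its second column for boundary edges, I would remark that the update never dereferences an invalid index, which is guaranteed exactly because the second line operates only on the interior mask.
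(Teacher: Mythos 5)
Your verification is correct and follows essentially the same route as the paper's own proof: initialize every edge with the boundary-case value \(3a\,h_E^2/\vert T_+\vert\) from the first column of \texttt{e4s}, then correct the interior edges by the prefactor \(1/4\) and the added \(T_-\) term \(3a\,h_E^2/(4\vert T_-\vert)\). Your explicit remarks on the \(k=2\) specialization (so \(k(k-1)=2\)) and on the zero entries in the second column of \texttt{e4s} never being dereferenced are sound and merely make explicit what the paper leaves implicit.
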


\begin{proof}
	The parameter \(\sigma_{\IP,E}\)
	consists of the sum of one contribution
	from each adjacent triangle \(T_+\) and \(T_-\).
	The value for all edges \(E \in \E\) is initialized
	by \(3a\,  h_E^2 / \vert T_+ \vert\).
	In case of an interior edge,
	the contribution from \(T_+\)
	is multiplied with the prefactor \(1/4\)
	and the term \(3a\, h_E^2 / (4\vert T_- \vert)\)
	for \(T_-\) is added.
\end{proof}

The contribution to the penalty term \(c_\IP\)
from the edge \(E \in \E\)
leads to the local matrix \(C(E) \in \R^{9 \times 9}\) with,
for \(\alpha, \beta = 1, \dots, 9\),
\[
C_{\alpha\beta}(E)
\coloneqq
\frac{\sigma_{\IP,E}}{h_E}
\int_E
[\nabla \phi_\alpha \cdot \nu_E]_E\,
[\nabla \phi_\beta \cdot \nu_E]_E
\;\mathrm{d}s.
\]
The three-point one-dimensional Simpson quadrature rule
with quadrature nodes \(Q_1,  Q_2, Q_3\)
from~\eqref{eq:Simpson_nodes}
and weights \(w_1 = w_3 = 1/6\) and \(w_2 = 2/3\)
integrates the product of the jump terms exactly and
reads
\begin{equation}
	\label{eq:local_penalty_term}
	C_{\alpha\beta}(E)
	=
	\sigma_{\IP,E}
	\sum_{j = 1}^3
	w_j\,
	[\nabla \phi_\alpha(Q_j) \cdot \nu_E]_E \,
	[\nabla \phi_\beta(Q_j) \cdot \nu_E]_E.
\end{equation}
If \(E \in \E(\partial\Omega)\),
\(\phi_7, \phi_8, \phi_9 \equiv 0\)
implies that
\(C_{\alpha\beta}(E) = 0\) vanishes for all \(\alpha, \beta = 1,\dots,9\)
with \(7 \leq \alpha\) or \(7 \leq \beta\).

\begin{lemma}
	\label{lem:computation_penalty_term}
        Suppose that \textup{\texttt{JumpNormal}}
        \(\in \R^{9 \times 3}\) contains the evaluation
        of the normal jumps from Lemma~\ref{lem:computation_jumpnormal}.
	The realization of the quadrature rule
	in~\eqref{eq:local_penalty_term} reads
	\begin{lstlisting}[numbers=none,basicstyle=\upshape\ttfamily\scriptsize]
Integrand = permute(JumpNormal, [1 3 2]) .* permute(JumpNormal, [3 1 2]);
C4s(:,:,s) = sigma4s(s)/6 * sum(cat(3, 1, 4, 1) .* Integrand, 3);
	\end{lstlisting}
\end{lemma}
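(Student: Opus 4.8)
The plan is to verify that the two displayed MATLAB lines reproduce the definition~\eqref{eq:local_penalty_term} of the local penalty matrix $C(E) \in \R^{9 \times 9}$ entry by entry. First I would recall from Lemma~\ref{lem:computation_jumpnormal} that \texttt{JumpNormal} $\in \R^{9 \times 3}$ stores, in its entry at row $\alpha$ and column $j$, the evaluation $[\nabla \phi_\alpha(Q_j) \cdot \nu_E]_E$ of the normal jump of the $\alpha$-th patch basis function at the $j$-th Simpson node $Q_j$, for $\alpha = 1, \dots, 9$ and $j = 1, 2, 3$. This is the only input the two lines depend on, so the argument reduces to tracking array dimensions.

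The next step analyses the construction of \texttt{Integrand}. The command \texttt{permute(JumpNormal, [1 3 2])} relabels the node index from the second to the third array dimension, producing a $9 \times 1 \times 3$ array, while \texttt{permute(JumpNormal, [3 1 2])} produces a $1 \times 9 \times 3$ array. MATLAB's implicit singleton expansion in the elementwise product \texttt{.*} then yields \texttt{Integrand} $\in \R^{9 \times 9 \times 3}$ whose $(\alpha, \beta, j)$-entry equals the product $[\nabla \phi_\alpha(Q_j) \cdot \nu_E]_E\, [\nabla \phi_\beta(Q_j) \cdot \nu_E]_E$, that is, the summand of~\eqref{eq:local_penalty_term} for the fixed quadrature node $Q_j$.

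Finally I would identify the weighting and the contraction. The $1 \times 1 \times 3$ array \texttt{cat(3, 1, 4, 1)} encodes the Simpson weights up to the common factor $1/6$, since $(1, 4, 1)/6 = (w_1, w_2, w_3)$ with $w_1 = w_3 = 1/6$ and $w_2 = 2/3$. Multiplying by \texttt{sigma4s(s)/6} introduces both the factor $\sigma_{\IP,E}$ and the denominator $6$, and \texttt{sum(..., 3)} sums over the node dimension, so that the resulting $9 \times 9$ slice \texttt{C4s(:,:,s)} has $(\alpha, \beta)$-entry $\sigma_{\IP,E} \sum_{j=1}^3 w_j [\nabla \phi_\alpha(Q_j) \cdot \nu_E]_E [\nabla \phi_\beta(Q_j) \cdot \nu_E]_E$, which is exactly $C_{\alpha\beta}(E)$ from~\eqref{eq:local_penalty_term}.

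The only real subtlety, and the step I expect to require the most care to state cleanly, is matching the broadcasting semantics of the two \texttt{permute} calls so that the row index $\alpha$ and the column index $\beta$ of the rank-one product land in dimensions one and two while the node index $j$ is summed out. The boundary case $E \in \E(\partial\Omega)$ needs no separate treatment, because rows $7, 8, 9$ of \texttt{JumpNormal} already vanish there by Lemma~\ref{lem:computation_jumpnormal}, automatically zeroing the corresponding rows and columns of $C(E)$ as required after~\eqref{eq:local_penalty_term}.
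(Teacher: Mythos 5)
Your proposal is correct and takes essentially the same route as the paper's own (terser) proof: pairwise products of the jump evaluations assembled by \texttt{permute} and singleton expansion, the Simpson weights encoded as \texttt{cat(3, 1, 4, 1)} with the common factor $1/6$ absorbed into \texttt{sigma4s(s)/6}, and contraction over the node dimension. In fact your dimension bookkeeping is the more accurate one: \texttt{Integrand} is indeed of size $9 \times 9 \times 3$ (the paper's proof states $\R^{9 \times 3 \times 9}$, an apparent typo, since \texttt{sum(..., 3)} must run over the quadrature-node dimension), and your closing remark that the vanishing rows $7$--$9$ of \texttt{JumpNormal} from Lemma~\ref{lem:computation_jumpnormal} automatically produce the zero rows and columns required for $E \in \E(\partial\Omega)$ correctly accounts for the boundary case noted after~\eqref{eq:local_penalty_term}.
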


\begin{proof}
	The pairwise products of the evaluations
	of the normal jumps
	\([\nabla \phi_\alpha \cdot \nu_E]_E\),
	\(\alpha = 1, \dots, 9\),
	in \texttt{JumpNormal}
	form the array \texttt{Integrand}
	\(\in \R^{9 \times 3 \times 9}\).
	The multiplication with the penalty parameter
	\(\sigma_{\IP,E}\) and the quadrature weights
	followed by the sum over the three terms
	of the quadrature rule
	complete the computation of~\eqref{eq:local_penalty_term}.
\end{proof}

\subsection{Assembling} \label{sec:assembling}

This section describes the assembling of the global
system matrix using the local contributions
from the Sections~\ref{sec:stiffness}--\ref{sec:penalty}.
To this end,
let \(\V = \{z_1, \dots, z_K\}\) denote
a global enumeration of
the \(K \coloneqq \vert \V \vert \in \N\) vertices
and \(\E = \{E_1, \dots, E_L\}\) of
the \(L \coloneqq \vert \E \vert \in \N\) edges
of the triangulation \(\T\).
The edge midpoints
\(\Mid(\E) = \{m_\ell \coloneqq \Mid(E_\ell) \;:\; \ell = 1,\dots, L\}\)
are numbered accordingly.
This induces a global enumeration of the
\(\mathtt{N} \coloneqq N \coloneqq K + L\) basis functions
\(\varphi_1, \dots, \varphi_N \in S^2(\T)\) via,
for all \(j,k = 1, \dots, K\) and \(\ell,n = 1, \dots, L\),
\[
    \varphi_j(z_k) = \delta_{jk},
    \quad
    \varphi_{K+\ell}(m_n) = \delta_{\ell n},
    \quad\text{and}\quad
    \varphi_j(m_n) = 0 = \varphi_{K+\ell}(z_k).
\]

The bilinear form \(a_\PW\)
leads to the global stiffness matrix \(A \in \R^{N \times N}\)
with
\[
    A_{jk}
    \coloneqq
    a_\PW(\varphi_j, \varphi_k)
    =
    \sum_{T \in \T}
    \int_T
    \D^2 \varphi_j : \D^2 \varphi_k
    \;\mathrm{d}x \quad \text{for all } j,k = 1, \dots, N.
\]
The integration of the right-hand side
\(\int_\Omega f \, \varphi_j \dx\)
from~\eqref{eq:discrete_form}
employs the edge-oriented midpoint quadrature rule.
The resulting right-hand side vector \(b \in \R^N\) reads,
for all \(j = 1, \dots, K\) and \(\ell = 1,\dots, L\),
\[
    b_j
    \coloneqq
    0
    \quad\text{and}\quad
    b_{K + \ell}
    \coloneqq
    \sum_{T \in \T, E_\ell \subset T}
    \frac{\vert T \vert}{3}
    f(\Mid(E_\ell)).
\]
The assembling of \(A\) and \(b\)
with the local contributions \(A(T)\) and \(b(T)\)
from Section~\ref{sec:stiffness}
requires the translation of
local indices of shape functions on the triangle \(T \in \T\)
into global indices of basis functions.
The index set
\begin{align*}
    I(T)
    &\coloneqq
    \big\{
    (\alpha, j) \in \{1,2,3\} \times \{1, \dots, K\}
    \;:\;
    P_\alpha = z_j
    \big\}\\
    &\hphantom{{}\coloneqq{}}
    \cup
    \big\{
    (3 + \alpha, K + \ell)
    \;:\;
    (\alpha, \ell) \in \{1,2,3\} \times \{1, \dots, L\}
    \text{ with }
    M_\alpha = m_\ell
    \big\}.
\end{align*}
is stored in the variable
\texttt{global4e = [n4e, nNodes + s4e]'}
in that the index \(j\) has position \(\alpha\)
in the column associated to \(T\)
and analogously for \(K+\ell\) and \(3 + \alpha\).
The set \(I(T)\) leads to the assembling of
\(A \in \R^{N \times N}\) and \(b \in \R^N\)
via
\begin{align}
    \label{eq:assembling_A}
    A
    &=
    \sum_{T \in \T}
    \sum_{(\alpha, j) \in I(T)}
    \sum_{(\beta, k) \in I(T)}
    A_{\alpha\beta}(T) \, e_j \otimes e_k,
    \\
    \label{eq:assembling_b}
    b
    &=
    \sum_{T \in \T}
    \sum_{(\alpha, j) \in I(T)}
    b_\alpha(T) \, e_j.
\end{align}

\begin{lemma}
    \label{lem:triangle_assembling}
    The following lines compute \(A\) and \(b\)
    from~\eqref{eq:assembling_A}--\eqref{eq:assembling_b}.
\begin{lstlisting}[numbers=none]
RowIndices4e = permute(repmat(global4e, [1 1 6]), [1 3 2]);
ColIndices4e = permute(RowIndices4e, [2 1 3]);
A = sparse(RowIndices4e(:), ColIndices4e(:), A4e(:), N, N);
b = accumarray(dof4e(:), b4e(:), [N 1]);
\end{lstlisting}
\end{lemma}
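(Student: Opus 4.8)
The plan is to verify that MATLAB's accumulating constructors \texttt{sparse} and \texttt{accumarray}, combined with the index arrays assembled by \texttt{repmat} and \texttt{permute}, realize exactly the global sums in~\eqref{eq:assembling_A}--\eqref{eq:assembling_b}. The central point is that both constructors \emph{add} all values whose subscripts coincide; this summation is precisely the regrouping required whenever several local contributions share a global degree of freedom, such as a vertex contained in many triangles or an edge midpoint shared by the two adjacent triangles.

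First I would record the role of the index map. By the definition \texttt{global4e = [n4e, nNodes + s4e]'}, the array \texttt{global4e} \(\in \N^{6 \times \vert \T \vert}\) stores, in the column belonging to the triangle \(T\) of index \texttt{e}, the global indices of the six local degrees of freedom: position \(\alpha \in \{1,2,3\}\) equals the global vertex number \(j\) with \(P_\alpha = z_j\), and position \(3+\alpha\) equals \(K+\ell\) for the midpoint \(M_\alpha = m_\ell\). Hence the pairs \((\alpha, \mathtt{global4e}(\alpha, \mathtt{e}))\) run exactly through the index set \(I(T)\), and \texttt{dof4e} is this same global index array.

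Next I would track the dimension bookkeeping. The array \texttt{global4e} has shape \(6 \times \vert \T \vert\); then \texttt{repmat(global4e, [1 1 6])} has shape \(6 \times \vert \T \vert \times 6\), and the subsequent \texttt{permute} with \([1\,3\,2]\) reorders it to \(6 \times 6 \times \vert \T \vert\), making the \((\alpha, \beta, \mathtt{e})\)-entry of \texttt{RowIndices4e} equal to \(\mathtt{global4e}(\alpha, \mathtt{e})\), independently of \(\beta\). The further \texttt{permute} with \([2\,1\,3]\) makes the \((\alpha, \beta, \mathtt{e})\)-entry of \texttt{ColIndices4e} equal to \(\mathtt{global4e}(\beta, \mathtt{e})\). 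Since the \((\alpha,\beta,\mathtt{e})\)-entry of \texttt{A4e} equals \(A_{\alpha\beta}(T)\) and all three arrays share the shape \(6 \times 6 \times \vert \T \vert\), their column-major linearizations \texttt{RowIndices4e(:)}, \texttt{ColIndices4e(:)}, and \texttt{A4e(:)} traverse the triples \((\alpha,\beta,\mathtt{e})\) in one and the same order.

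It then remains to invoke the accumulation semantics. The constructor \texttt{sparse(I, J, V, N, N)} returns \(\sum_m V_m\, e_{I_m} \otimes e_{J_m}\), summing \(V_m\) over coincident subscript pairs; substituting the aligned triples \((\mathtt{global4e}(\alpha,\mathtt{e}), \mathtt{global4e}(\beta,\mathtt{e}), A_{\alpha\beta}(T))\) and regrouping the sum over \(\mathtt{e}\), \(\alpha\), and \(\beta\) reproduces~\eqref{eq:assembling_A}. The same reasoning applies to \texttt{accumarray(dof4e(:), b4e(:), [N 1])} with \(\mathtt{b4e}(\alpha,\mathtt{e}) = b_\alpha(T)\), yielding~\eqref{eq:assembling_b}, where the vanishing vertex entries of \(b_\alpha(T)\) contribute nothing. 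I expect the one genuine subtlety to be the dimension bookkeeping of the preceding paragraph, namely confirming that the first index of \texttt{A4e} is the local row matched by \texttt{RowIndices4e} and the second the local column matched by \texttt{ColIndices4e}, and that all three arrays are linearized in the identical order; once this alignment is secured, the claim follows directly from the summing behaviour of \texttt{sparse} and \texttt{accumarray}.
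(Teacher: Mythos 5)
Your proposal is correct and follows essentially the same route as the paper's (much terser) proof: rearrange the index set \(I(T)\) stored in \texttt{global4e} into arrays congruent to \texttt{A4e} and rely on the accumulation semantics of \texttt{sparse} and \texttt{accumarray}. Your explicit verification of the \texttt{repmat}/\texttt{permute} shape bookkeeping and of the aligned column-major linearizations simply spells out what the paper leaves implicit.
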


\begin{proof}
    The indices of \(I(T)\) in \texttt{global4e}
    are rearranged to match the requirements
    of MATLAB's \texttt{sparse} command.
    This results in the two arrays
    \texttt{RowIndices4e}, \texttt{ColIndices4e}
    \(\in \N^{6 \times 6 \times \vert \T \vert}\)
    of the same size as \texttt{A4e}.
    For each entry in \texttt{A4e},
    the corresponding entries in
    \texttt{RowIndices4e} and \texttt{ColIndices4e}
    provide the row and column indices of the global
    position in \texttt{A}.
    The \texttt{accumarray} function realizes the
    same procedure for dense one-dimensional
    arrays.
\end{proof}

The bilinear forms \(\mathcal{J}\) and \(c_\IP\)
lead to the global matrices
\(J \in \R^{N \times N}\)
and \(C \in \R^{N \times N}\)
with, for all \(j,k = 1, \dots, N\),
\begin{align*}
    J_{jk}
    &\coloneqq
    \mathcal{J}(\varphi_j, \varphi_k)
    =
    \sum_{E \in \E}
    \int_E
    \langle (\D^2_\PW \varphi_j \nu_E) \cdot \nu_E \rangle_E
    \cdot
    [\nabla \varphi_k \cdot \nu_E]_E
    \;\mathrm{d}s,\\
    C_{jk}
    &\coloneqq
    c_\IP(\varphi_j, \varphi_k)
    =
    \sum_{E \in \E}
    \frac{\sigma_{\IP,E}}{h_E}
    \int_E
    [\nabla \varphi_j \cdot \nu_E]_E\,
    [\nabla \varphi_k \cdot \nu_E]_E
    \;\mathrm{d}s.
\end{align*}
Recall the enumeration of the nodes \(Q_1, \dots, Q_9\)
from~\eqref{eq:Simpson_nodes}
as displayed in Figure~\ref{fig:edge_patch}.
The translation of
local indices of shape functions on the edge patch \(\omega_E\)
into global indices of basis functions
employs the index sets
\(I(E) \coloneqq I^+(E) \cup I^-(E)\)
defined by
\begin{align*}
    I^+(E)
    &\coloneqq
    \big\{
        (\alpha, j)
        \in \{1,3,5\} \times \{1, \dots, K\}
        \;:\;
        Q_\alpha = z_j
    \big\}
    \\
    &\hphantom{{}\coloneqq{}}
    \cup
    \big\{
        (\alpha, M + \ell)
        \;:\;
        \alpha \in \{2,4,6\}, \ell \in \{1, \dots, L\},
        \:
        Q_\alpha = m_\ell
    \big\}
\end{align*}
and \(I^-(E)\) according to the location of \(E\).
For an interior edge \(E \in \E(\Omega)\),
the indices from \(T_-\) read
\begin{align*}
    I^-(E)
    &\coloneqq
    \big\{
    (8, j) \;:\; j \in \{1, \dots, K\},\: Q_8 = z_j
    \big\}
    \\
    &\hphantom{{}\coloneqq{}}
    \cup
    \big\{
    (\alpha, K + \ell)
    \;:\;
    \alpha \in \{7, 9\},
    \ell \in \{1, \dots, K\},\: Q_\alpha = m_\ell
    \big\}.
\end{align*}
For a boundary edge \(E \in \E(\partial\Omega)\),
recall that the local contributions
\(J_{\alpha\beta}(E) = C_{\alpha\beta}(E) = 0\)
from the Sections~\ref{sec:jumps} and~\ref{sec:penalty}
vanish for all \(\alpha, \beta = 1,\dots,9\)
with \(7 \leq \alpha\) or \(7 \leq \beta\).
The vanishing entries do not influence the assembling
of the global matrix and
the corresponding index pairs can be chosen arbitrarily,
e.g.,
\(
    I^-(E)
    \coloneqq
    \{(7,1), (8,1), (9,1)\}
\).
The set \(I(E)\) is stored in a variable
\texttt{dof4s} \(\in \N^{\vert \E \vert \times 9}\)
similarly to \texttt{dof4e} for \(I(T)\).
The assembling of \(J \in \R^{N \times N}\)
with the local contributions \(J(E)\)
from Section~\ref{sec:jumps} reads
\begin{equation}
    \label{eq:edge_assembling}
    J
    =
    \sum_{E \in \E}
    \sum_{(\alpha, j) \in I(E)}
    \sum_{(\beta, k) \in I(E)}
    J_{\alpha\beta}(E) \, e_j \otimes e_k,
\end{equation}
and analogously for
\(C \in \R^{N \times N}\)
with \(C_{\alpha\beta}(E)\)
from Section~\ref{sec:penalty}
replacing \(J_{\alpha\beta}(E)\).
The MATLAB realization of~\eqref{eq:edge_assembling}
is verbatim to Lemma~\ref{lem:triangle_assembling}
with \texttt{dof4s} replacing \texttt{dof4e}.

The sum of the three contributions results in the
system matrix
\[
    B
    \coloneqq
    A - J - J^\top + C
    \in
    \R^{N \times N}.
\]
Since the discrete solution \(u_\IP \in S^2_0(\T)\)
to~\eqref{eq:discrete_form}
vanishes along the boundary,
the indices of the degrees of freedom in the set
\[
    I(\Omega)
    \coloneqq
    \big\{
        j \in \{1, \dots, M\}
        \;:\;
        z_j \in \V(\Omega)
    \big\}
    \cup
    \big\{
        M + \ell
        \;:\;
        \ell \in \{1, \dots, L\},\:
        E_\ell \in \E(\Omega)
    \big\}
\]
restrict to all interpolation nodes
in the interior of the domain.
The discrete formulation~\eqref{eq:discrete_form}
seeks the coefficient vector
\(x \in \R^{M + L}\) of
\[
    u_\IP
    =
    \sum_{j = 1}^{M + L}
    x_j \varphi_j
    \in
    S^2_0(\T)
\]
with \(x_j = 0\) for all boundary indices
\(j \in \{1, \dots, M + L\} \setminus I(\Omega)\)
satisfying
the linear system of equations
\[
    (B_{jk})_{j,k \in I(\Omega)} (x_k)_{k \in I(\Omega)}
    =
    (b_j)_{j \in I(\Omega)}.
\]
The solution of this system concludes the realization
of the C\textsuperscript{0} interior penalty
method from~\eqref{eq:discrete_form}.

The practical implementation of
the eigenvalue problem~\eqref{eq:EVP}
employs the matrices \texttt{A}, \texttt{B}, and \texttt{C}
and MATLAB's \texttt{eigs} function
in one line.
\begin{lstlisting}[numbers=none]
[~, lambda] = eigs(B(dof,dof), A(dof,dof) + C(dof,dof), 1, 0);
\end{lstlisting}

\subsection{A posteriori error estimation}
\label{app:estimator}
The estimator from~\eqref{eq:estimator}
can be evaluated using some of the data structures above.
\begin{lstlisting}[numbers=none]
% Compute volume term
vol4e = area4e .* sum(valF4e.^2, 2) / 3;
% Local coefficient vectors
u4e = u(global4e');
u4s = u(global4s');
% Binormal Hessian jump term
HessU4e = squeeze(sum(Hess4e .* permute(u4e, [2 3 4 1]), 1));
signedNormal4e = ...
    permute(reshape(normal4s(s4e(:),:), [nElem 3 2]), [3 2 1]);
HessBinormalU4e = ...
    squeeze(sum(permute(normal4e, [1 4 3 2]) .* HessU4e ...
                .* permute(signedNormal4e, [4 1 3 2]), [1 2]));
jump4s = accumarray(s4e(:), HessBinormalU4e(:), [nSides 1]);
jump4s = length4s.^2 .* jump4s.^2;
% Remove boundary contributions
jump4s(CbSides) = 0;
% Compute penalty term
penalty4s = sigma4s .* squeeze(sum(C4s .* permute(u4s, [2 3 1]) ...
                                    .* permute(u4s, [3 2 1]), [1 2]));
% Assembling
eta4e = (area4e.^2) .* vol4e + sum(jump4s(s4e), 2)...
        + sum(penalty4s(s4e), 2);
eta = sqrt(sum(eta4e));
\end{lstlisting}

\end{document}